\numberwithin{equation}{section}
\numberwithin{figure}{section}
\theoremstyle{definition}
		\newtheorem{theorem}{Theorem}[section]
				\newtheorem{proposition}[theorem]{Proposition}
				\newtheorem{lemma}[theorem]{Lemma}
				\newtheorem{corollary}[theorem]{Corollary}
     	        \newtheorem{definition}[theorem]{Definition}
	            \newtheorem{remark}[theorem]{Remark}
\numberwithin{equation}{section}
\newcommand*{\bR}{\ensuremath{\mathbb{R}}}
\newcommand*{\Wert}{\mathord{\mbox{|\kern-1.5pt|\kern-1.5pt|}}}
\newcommand*{\ie}{\mbox{i.e.}\xspace}
\newcommand\numberthis{\addtocounter{equation}{1}\tag{\theequation}}
\newcommand{\N}{\mathbb{N}}	% Numeri naturali
\newcommand{\R}{\mathbb{R}}	% Numeri reali
\newcommand{\HH}{\mathbb{H}}	% Gruppo di Heisenberg
\newcommand{\G}{\mathbb{G}}	% Grassmanniane
\newcommand{\Co}{\mathscr{C}}	% Funzioni continue
\renewcommand{\Vec}{\mathrm{Vec}}	% Spazio dei campi vettore
\newcommand{\Span}{\mathrm{span}}	% Span
\newcommand{\dd}{\,\mathrm{d}}	% 'd' di derivata
\newcommand{\ph}[1]{\phantom{#1}}	% \phantom abbreviato
\newcommand{\ol}[1]{\overline{#1}}
\renewcommand{\a}{\alpha}
\renewcommand{\b}{\beta}
\newcommand{\mD}{\mathcal{D}}
\renewcommand{\ph}{\phantom{\alpha}}
\DeclareMathOperator{\Modd}{Mod}
\DeclareMathOperator{\modulus}{Mod}
\DeclareMathOperator{\Lip}{Lip}
\DeclareMathOperator{\Vol}{Vol}
\DeclareMathOperator{\card}{card}
\DeclareMathOperator{\Dett}{Det}
\newcommand{\scr}[1]{\mathscr{#1}}
\newcommand{\frk}[1]{\mathfrak{#1}}
\newcommand{\g}{\frk g}
\def\Xint#1{\mathchoice
  {\XXint\displaystyle\textstyle{#1}}%
  {\XXint\textstyle\scriptstyle{#1}}%
  {\XXint\scriptstyle\scriptscriptstyle{#1}}%
  {\XXint\scriptscriptstyle\scriptscriptstyle{#1}}%
  \!\int}
\def\XXint#1#2#3{{\setbox0=\hbox{$#1{#2#3}{\int}$}
  \vcenter{\hbox{$#2#3$}}\kern-.5\wd0}}
\def\dashint{\Xint-}
\dedicatory{Dedicated to Seppo Rickman and Jussi V\"ais\"al\"a on the occasion of their $80^\text{th}$ birthdays}
\begin{document}

\title[Equivalence of quasiregularity via the Popp extension]{Equivalence of quasiregular mappings on subRiemannian manifolds via the Popp extension}
\date{\today}
\author[Chang-Yu Guo]{Chang-Yu Guo}
\address[Chang-Yu Guo]{Department of Mathematics and Statistics, University of Jyv\"askyl\"a and Department of Mathematics, University of Fribourg}
\email{changyu.guo@unifr.ch}

\author[Tony Liimatainen]{Tony Liimatainen}
\address[Tony Liimatainen]{Department of Mathematics and Statistics, University of Jyv\"askyl\"a}
\email{tony.t.liimatainen@jyu.fi}

\subjclass[2010]{53C17,30C65,58C06,58C25}
\keywords{subRiemannian manifolds, quasiregular mappings, Popp extension, $P$-differential, conformality}

\begin{abstract}
We show that all the common definitions of quasiregular mappings $f\colon M\to N$ between two equiregular subRiemannian manifolds of homogeneous dimension $Q\geq 2$ are quantitatively equivalent with precise dependences of the quasiregularity constants. As an immediate consequence, we obtain that if $f$ is $1$-quasiregular according to one of the definitions, then it is also $1$-quasiregular according to any other definition. In particular, this recovers a recent theorem of Capogna et al.~\cite{clo14} on the equivalence of $1$-quasiconformal mappings.

Our main results answer affirmatively a few open questions from the recent research. The main new ingredient in our proofs is the distortion estimates for particular local extensions of the horizontal metrics. These extensions are named ``Popp extensions", and based on these extensions, we introduce a new natural and invariant definition of quasiregularity in the equiregular subRiemannian setting. The analysis on Popp extensions and on the implied distortion is also of independent interest. 
\end{abstract}

\maketitle
\tableofcontents{}

\section{Introduction}\label{sec:introduction}
A continuous mapping $f\colon X\to Y$ between topological spaces is said to be a \textit{branched covering} if $f$ is both \textit{discrete} and \textit{open}. Recall that a mapping $f\colon X\to Y$ between two topological spaces is discrete if the preimage $f^{-1}(y)$ of each point $y\in Y$ is a discrete subset of $X$. 

For a branched covering $f\colon X\to Y$ between two metric spaces $(X,d)$ and $(Y,d)$, $x\in X$ and $r>0$, we set
\begin{equation}\label{pw_distort}
H_f(x,r):=\frac{L_f(x,r)}{l_f(x,r)},
\end{equation}
where 
\begin{equation*}
L_f(x,r):=\sup_{y\in X}{\{d(f(x),f(y)):d(x,y)= r\}},
\end{equation*}
and
\begin{equation*}
l_f(x,r):=\inf_{y\in X}{\{d(f(x),f(y)):d(x,y)= r\}}.
\end{equation*}
Then the \textit{linear dilatation function} $H_f$ of $f$ is defined pointwise by 
\begin{equation*}
H_f(x)=\limsup_{r\to0}H_f(x,r), \quad x\in X.
\end{equation*}

\begin{definition}\label{def:metric quasiregular map}
	A branched covering $f\colon X\to Y$ between two metric measure spaces is termed \textit{metrically $H$-quasiregular} if the linear dilatation function $H_f$ is finite everywhere and essentially bounded from above by finite constant $H\in [1,\infty)$. 
\end{definition}

If $f\colon X\to Y$ in Definition~\ref{def:metric quasiregular map} is additionally assumed to be homeomorphic, then $f$ is called \textit{metrically $H$-quasiconformal\footnote{The definition of metric quasiconformality is somewhat weaker than the more commonly used definition in literature. Namely, we do not require that the linear dilatation is bounded everywhere. However, the two definitions coincide if the metric measure spaces have locally bounded geometry.}}. We will call $f$ in short a metrically quasiregular (quasiconformal) mapping if it is metrically $H$-quasiregular (-quasiconformal) for some $H\in [1,\infty)$. 

The importance of quasiconformal mappings in
complex analysis was realized by Ahlfors, Teichm\"uller and Morrey
in the 1930s. Ahlfors used quasiconformal
mappings in his geometric approach to Nevanlinna's
value distribution theory that earned him one of the first two Fields
medals. Teichm\"uller used quasiconformal mappings
to measure the distance between two conformally inequivalent
compact Riemann surfaces, starting what
is now called Teichm\"uller theory. Morrey proved a powerful existence theorem, called the 
measurable Riemann mapping theorem, which has had tremendous impact on complex analysis and
dynamics, Teichm\"uller theory, low dimensional
topology, inverse problems and partial differential equations.   

The higher-dimensional theory of quasiconformal mappings was
initiated in earnest by Reshetnyak, Gehring and
V\"ais\"al\"a in the early 1960s~\cite{g62,g63,re89}. Generalizations to quasiregular mappings were introduced in the works by Reshetnyak, 
and the basic theory was comprehensively laid and significantly advanced
in a sequence of papers from the Finnish school of Martio, Rickman and
V\"ais\"al\"a in the late 1960s~\cite{mrv69,mrv70,mrv71}.

Historically, there are three different definitions of quasiconformality: the metric definition, the analytic definition, and the geometric definition (see Section~\ref{subsec:Definitions of quasiregular mappings} for a detailed description of these definitions). Apparently, the metric definition is of infinitesimal flavor, the analytic definition is a pointwise condition, and the geometric definition is more of a global nature. It is a rather deep fact, due to Gehring~\cite{g63} and V\"ais\"al\"a~\cite{v61}, that all the three definitions of quasiregularity are equivalent, quantitatively. The interplay of all three aspects of quasiconformality/quasiregularity is an
important feature of the theory; see~\cite{bi83,im01,re89,r93,v71} for more on the Euclidean theory of these mappings. 

The interplay between the different definitions of quasiconformality and quasiregularity in more general spaces is quite intricate and has drawn attention from many mathematicians working in the analysis on metric spaces. In the setting of Ahlfors regular spaces supporting an abstract Poincar\'e inequality, the foundational results were established by Heinonen and Koskela~\cite{hk98}. More delicate results were obtained later by Tyson in a sequence of papers~\cite{t98,t00,t01}. In particular, the equivalence of the definitions were achieved in the setting of metric spaces of locally bounded geometry in the seminal work~\cite{hkst01}. Among others, let us point out that, by the surprising work of Williams~\cite{w12proc}, the analytic quasiconformality is always equivalent to the geometric quasiconformality without any a priori assumption on the underlying metric measure spaces. See also~\cite{w14} for the most recent results along this direction.

The question of the equivalences in the quasiregular category arose naturally after the fundamental work of Heinonen and Rickman~\cite{hr02}, where a metric theory of branched coverings between \textit{generalized manifolds} were developed. Some of the equivalences were later obtained by Onninen and Rajala~\cite{or09} in the setting of branched coverings from Euclidean spaces to generalized manifolds of certain type. The most recent remarkable advance is due to Williams~\cite{w12}, where these equivalences were obtained in metric spaces of locally bounded geometry; see also the work of Cristea~\cite{c06}.

In this paper, we consider these equivalences on \textit{subRiemannian manifolds}. Our study is partially motived by the study of uniformly quasiregular mappings in the subRiemannian manifolds~\cite{bfp12,mp15,flp14}. On the other hand, together with the very recent paper~\cite{gnw15}, it completes the foundations of the theory of quasiregular mappings in general in the setting of \textit{equiregular} subRiemannian manifolds. We refer to~\cite{hh97,d99,d00,flp14,gnw15} and the references therein for more on the theory of quasiregular mappings on subRiemannian manifolds, and also on the historical development of the theory.

Let $(M,g)$ be an equiregular subRiemannian $n$-manifold of homogeneous dimension $Q\geq 2$. We use the notation $d_M$ and $\Vol_M$  for the corresponding \textit{subRiemannian distance} and \textit{Popp volume} on $(M,g)$ respectively (precise definitions are given in Section~\ref{sec:Geometry of equiregular subRiemannian manifolds} below). Alternatively, we use $P_g$ to denote the Popp volume form on $(M,g)$. If not stated otherwise, we use the Popp measure as the measure on the subRiemannian manifolds in question throughout the text. 

Our main result of this paper is the following precise quantitative equivalence result on various definitions of quasiregularity in the equiregular subRiemannian setting. We express the relation between the different quasiregularity constants by using constants $H,\widehat{H}, K$ and $\widehat{K}$, and then explain how these are related. We remark at this point, in a slightly vague manner, that the constants $H$ and $\widehat{H}$ bound ``horizontal distortion'', while $K$ and $\widehat{K}$ are bounds for ``total distortion''. 

\begin{theorem}\label{thm:equivalence of quasiregular mappings}
	Let $f\colon (M,g)\to (N,h)$ be a branched covering between two equiregular subRiemannian manifolds of homogeneous dimension $Q\geq 2$ and rank $k$. Then the following conditions are quantitatively equivalent:
	\begin{align*}
	1)\quad  f  &\mbox{ is a metrically $H$-quasiregular mapping}, \\
	2)\quad f  &\mbox{ is a weak metrically $H$-quasiregular mapping}, \\ % locally bi-Lipschitz and } \phi^*h=c\,g \mbox{ a.e.,}  \label{weak_form} \\ 
	3)\quad f &\mbox{ is a horizontally $\widehat{H}$-quasiregular mapping}, \\ %subRiemannian $K_0$-
	4)\quad f  &\mbox{ is an analytically $K$-quasiregular mapping}, \\ %a local $C^1$ diffeomorphism and } \phi^*h=c\,g, \label{1dif} \\
	5)\quad f &\mbox{ is a geometrically $K$-quasiregular mapping }, \\
	6)\quad f  &\mbox{ is a subRiemannian $\widehat{K}$-quasiregular mapping}. %a local $C^{r+1}$ diffeomorphism and } \phi^*h=c\,g. \label{rdif}
	%quasiregular}
	\end{align*}
	We have the following precise dependences on the quasiregularity constants $H,\widehat{H}, K$ and $\widehat{K}$:
	\begin{itemize}
	 \item If $f$ is weak metrically $H$-quasiregular, then it is analytically $K$-quasiregular with $K=H^{Q-1}$ and horizontally $\widehat{H}$-quasiregular with $\widehat{H}=H^{k-1}$. 
	
	\item If $f$ is analytically $K$-quasiregular, then it is metrically $H$-quasiregular with $H=K$ and horizontally $\widehat{H}$-quasiregular with $\widehat{H}=K$. 
	
	\item If $f$ is horizontally $\widehat{H}$-quasiregular, then $f$ is analytically $K$-quasiregular with $K=\widehat{H}^{Q-1}$, metrically $H$-quasiregular with $H=\widehat{H}$, and subRiemannian $\widehat{K}$-quasiregular with $\widehat{K}=\widehat{H}^{Q-1}$.
	
	\item If $f$ is subRiemannian $\widehat{K}$-quasiregular, then $f$ is horizontally $K\widehat{H}$-quasiregular with $\widehat{H}=\widehat{K}$.
	\end{itemize}
\end{theorem}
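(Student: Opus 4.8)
The plan is to separate the six conditions into a \emph{classical} cluster $\{1,2,4,5\}$ — metric, weak metric, analytic and geometric quasiregularity, whose mutual equivalence does not really see the subRiemannian structure — and a \emph{new} cluster $\{3,6\}$ — horizontal and subRiemannian quasiregularity — which are phrased through the $P$-differential and must be attached to the classical cluster by means of the Popp extension. For the classical cluster I would first record that an equiregular subRiemannian manifold, equipped with its Carnot--Carath\'eodory distance $d_M$ and Popp measure $\Vol_M$, is locally a metric measure space of bounded geometry: local Ahlfors $Q$-regularity is the ball--box theorem, and the local $(1,p)$-Poincar\'e inequality for every $p\ge 1$ is Jerison's theorem. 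Granting this, the equivalences $1\Leftrightarrow 2\Leftrightarrow 4\Leftrightarrow 5$, together with the quantitative bounds ``weak metric $H\Rightarrow$ analytic $K=H^{Q-1}$'' and ``analytic $K\Rightarrow$ metric $H=K$'', are exactly Williams' equivalence theorem for branched coverings between spaces of locally bounded geometry~\cite{w12} (the homeomorphic case being~\cite{hkst01}); in particular an analytically (or geometrically) quasiregular $f$ lies in $W^{1,Q}_{\loc}$ and is Pansu- (equivalently $P$-) differentiable almost everywhere.

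The genuinely new input is the Popp extension. Near a point one chooses a local Riemannian metric $\hat g$ that restricts to the horizontal metric $g$ on the horizontal bundle and whose Riemannian volume is the Popp volume $P_g$, and similarly $\hat h$ on $N$. The key lemma — the distortion estimate announced in the introduction — is that if the $P$-differential $D_Pf(x)$ has horizontal singular values $\lambda_1\ge\cdots\ge\lambda_k$, then, measured in $\hat g$ and $\hat h$, the map induced by $D_Pf(x)$ on the $j$-th layer of the associated graded tangent space has all its singular values between the product of the $j$ smallest and the product of the $j$ largest of the $\lambda_i$, hence in $[\lambda_k^{j},\lambda_1^{j}]$. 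Since the layer dimensions weighted by their grading sum to the homogeneous dimension $Q$, this yields the two-sided bound
\[
\lambda_1\lambda_k^{Q-1}\ \le\ J_f\ \le\ \lambda_1^{Q-1}\lambda_k
\]
for the Jacobian $J_f$ of $f$ taken with respect to the Popp measures, while the horizontal operator norm is $\|D_Hf\|=\lambda_1$. A companion estimate, via Pansu differentiability and the ball--box description of $d_M$, identifies the pointwise linear dilatation $H_f(x)$ with the homogeneous dilatation of $D_Pf(x)$, namely $\lambda_1/\lambda_k$.

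With the lemma in hand, the remaining implications come down to routine inequalities among the $\lambda_i$ combined with the Jacobian bounds and with the Sobolev regularity supplied by the classical cluster. Horizontal quasiregularity is a condition on the $k$-dimensional horizontal differential, whereas analytic and subRiemannian quasiregularity involve the $Q$-dimensional Popp data; the exponents $k-1$ and $Q-1$ in the theorem appear because passing between the dilatation and one of these analytic-type conditions inserts, respectively, $k-1$ or $Q-1$ singular-value ratios, each controlled by the dilatation via the display above. A representative instance: the analytic bound $\lambda_1^Q=\|D_Hf\|^Q\le K J_f\le K\lambda_1^{Q-1}\lambda_k$ forces $\lambda_1/\lambda_k\le K$, which simultaneously gives ``analytic $K\Rightarrow$ metric $H=K$'' and feeds into ``analytic $K\Rightarrow$ horizontal $\widehat H=K$''; conversely ``horizontal $\widehat H\Rightarrow$ analytic $K=\widehat H^{Q-1}$'' and ``$\Rightarrow$ metric $H=\widehat H$'' follow from $\lambda_1^Q=\widehat H^{Q-1}\lambda_1\lambda_k^{Q-1}\le\widehat H^{Q-1}J_f$ and from the dilatation identity. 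Since condition $6$ is just the analytic-type inequality written intrinsically through the Popp extensions, the same layerwise squeeze transfers it to conditions $3$ and $4$ with the stated powers, and the implication graph over $\{1,\dots,6\}$ is then strongly connected.

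I expect the real difficulty to be the key lemma. One must build the Popp extension so that it is compatible with the grading — in essence, that the Popp metric on the $j$-th layer coincides, up to explicitly controlled factors, with the metric induced from the horizontal metric by the $j$-fold bracket map — and then track how $D_Pf$ acts through those iterated brackets so as to obtain the singular-value bounds, and ultimately the two-sided Jacobian estimate, with \emph{honest} computable constants rather than mere comparability. Everything else is a citation (the classical cluster) or linear algebra; it is this analysis of the Popp extension and of the implied distortion — the part the abstract flags as of independent interest — that carries the weight of the theorem.
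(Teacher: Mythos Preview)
Your overall architecture is the paper's: the Popp extension, the layerwise singular-value bounds $\mu\in[\lambda_k^s,\lambda_1^s]$, and the resulting two-sided Jacobian estimate are exactly Proposition~\ref{eigenval_compr}, and your sample computations match steps (B)--(E) of the proof in Section~\ref{sec:Proofs of the main result}.

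There is, however, one genuine misconception in your plan. You say that the sharp constants in the ``classical cluster'' --- in particular ``weak metric $H\Rightarrow$ analytic $K=H^{Q-1}$'' and ``analytic $K\Rightarrow$ metric $H=K$'' --- are \emph{exactly} Williams' theorem, and that the Popp extension is only needed to attach $\{3,6\}$. This is not so. Williams (and Heinonen--Koskela--Shanmugalingam--Tyson) give the qualitative equivalence of $1,2,4,5$ in spaces of locally $Q$-bounded geometry, but the quantitative constants they produce depend on the Ahlfors-regularity and Poincar\'e data of the space; cf.\ Lemma~\ref{lemma:bounds on Jacobian} and Remark~\ref{rmk:section 4}(i), where the constant $C$ coming from local Ahlfors regularity cannot evidently be taken equal to~$1$. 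The paper stresses this point in the introduction: even in the Riemannian case the precise dependences in Theorem~\ref{thm:equivalence of quasiregular mappings} were not known from the metric-space theory. The sharp $K=H^{Q-1}$ in step (B) and the sharp $H=K$ in step (E) are obtained \emph{only} through the Popp-extension eigenvalue estimate (your key lemma), not by citation. So the Popp extension is doing work inside the classical cluster, not merely bridging it to $\{3,6\}$.

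In practice this changes little in your actual argument --- you already have the key lemma and you already use it correctly in your ``representative instance'' --- but you should reorganize: cite Williams only for $4\Leftrightarrow 5$ with the same $K$ (Proposition~\ref{prop:analytic equal geometric}) and for the \emph{qualitative} Sobolev regularity and everywhere-finiteness of $H_f$; derive all the sharp constants, including those among $1,2,4$, from your lemma. One further small point: your claim that the layer-$j$ singular values lie between the product of the $j$ smallest and the $j$ largest horizontal ones is stronger than what the paper proves in general (the paper gets only $[\lambda_k^j,\lambda_1^j]$, with the sharper product bound worked out only for step~$2$ in Remark~\ref{rmk:step 2}); fortunately the cruder bound suffices for every constant in the theorem.
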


Theorem~\ref{thm:equivalence of quasiregular mappings} was predicted to be true in~\cite[below Theorem 5.8]{gnw15} and it answers affirmatively an open question of F\"assler--Lukyanenko--Peltonen~\cite[Question 2.18]{flp14}. As we will see in a moment in Section~\ref{subsec:A remark on the definitions of quasiregularity in the Heisenberg groups}, as a byproduct of our proof, it also answers affirmatively~\cite[Question 2.17]{flp14}.

In the special case when $f$ is a homeomorphism between two Riemannian $n$-manifolds $(M,g)$ and $(N,h)$, the equivalences of different notions of quasiregularity showing in Theorem~\ref{thm:equivalence of quasiregular mappings} are well-known and follows directly from~\cite[Theorem 9.8]{hkst01} and~\cite[Theorem 5.2]{bkr07}. But these works do not imply such precise dependences of the quasiregularity constants as claimed in Theorem~\ref{thm:equivalence of quasiregular mappings}. In particular, it was not even clear whether all the definitions of $1$-quasiconformality are equivalent even in the Riemannian setting before the very recent work~\cite{clo14} (2016). Thus, the precise interrelation of the quasiregular constants obtained in Theorem~\ref{thm:equivalence of quasiregular mappings} seems to be new even in the setting of Riemannian manifolds. 

As an immediate consequence of Theorem~\ref{thm:equivalence of quasiregular mappings}, we obtain the following characterization of $1$-quasiregularity (or conformality).

\begin{corollary}\label{coro:equivalence of 1-quasiregular mappings}
Let $f\colon (M,g)\to (N,h)$ be a branched covering between two equiregular subRiemannian manifolds of homogeneous dimension $Q\geq 2$. Then the following conditions are equivalent:
\begin{align*}
	1)\quad  f  &\mbox{ is a metrically $1$-quasiregular mapping}, \\
	2)\quad  f  &\mbox{ is a weak metrically $1$-quasiregular mapping}, \\ % locally bi-Lipschitz and } \phi^*h=c\,g \mbox{ a.e.,}  \label{weak_form} \\ 
	3)\quad f &\mbox{ is a horizontally $1$-quasiregular mapping}, \\ %subRiemannian $K_0$-quasiregular}
	4)\quad f  &\mbox{ is an analytically $1$-quasiregular mapping}, \\ %a local $C^1$ diffeomorphism and } \phi^*h=c\,g, \label{1dif} \\
	5)\quad f &\mbox{ is a geometrically $1$-quasiregular mapping }, \\
	6)\quad f  &\mbox{ is a subRiemannian $1$-quasiregular mapping}. %a local $C^{r+1}$ diffeomorphism and } \phi^*h=c\,g. \label{rdif}
	\end{align*}
Moreover, each of the conditions above is further equivalent to the following equation of \emph{horizontal conformality}
\begin{align}\label{eq:equation of conformality}
	f^*h=cg.
\end{align}
Where $c$ is some a.e. positive function on $M$ and $f$ satisfied the regularity of assumptions of some (and thus all) definitions of $1$-quasiregularity in the theorem above. 

\end{corollary}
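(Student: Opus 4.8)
The plan is to deduce Corollary~\ref{coro:equivalence of 1-quasiregular mappings} directly from Theorem~\ref{thm:equivalence of quasiregular mappings} by tracking the quasiregularity constants through the chain of implications. First I would observe that all four displayed implications in Theorem~\ref{thm:equivalence of quasiregular mappings} are of the form ``if $f$ is $C$-quasiregular in one sense, then it is $C'$-quasiregular in another'', where $C'$ is a monotone function of $C$ that fixes the value $1$: indeed $H^{Q-1}$, $H^{k-1}$, $\widehat H^{Q-1}$, $K$, $\widehat K$ all equal $1$ precisely when the input constant is $1$. Hence setting every constant equal to $1$ in Theorem~\ref{thm:equivalence of quasiregular mappings} and chasing the arrows shows that conditions 1)--6) are mutually equivalent; the only mildly delicate point is bookkeeping the cyclic structure of the implications to confirm that $1$-quasiregularity in \emph{any} of the six senses propagates to \emph{all} of them, but since the graph of implications stated in the theorem is strongly connected on the relevant nodes, this is immediate.

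Next I would establish the equivalence with the horizontal conformality equation~\eqref{eq:equation of conformality}. The natural route is through the analytic (or the subRiemannian) definition: by the definitions collected in Section~\ref{subsec:Definitions of quasiregular mappings}, a $1$-analytically-quasiregular map is one whose $P$-differential $D_Pf$ satisfies, at a.e.\ point, $\|D_Pf(x)\|^{k} \le K \, J_f(x)$ with $K=1$ (the pointwise distortion inequality with optimal constant), which forces the horizontal differential to be a scalar multiple of a horizontal isometry at a.e.\ point; linear-algebraically this is exactly the statement that the pulled-back horizontal cometric is a pointwise scalar multiple of the original, i.e.\ $f^*h = c\,g$ on the horizontal bundle for some a.e.\ positive measurable function $c$. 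Conversely, if $f^*h=cg$ with $c>0$ a.e.\ and $f$ has the Sobolev/ACL-type regularity built into one of the definitions, then the horizontal differential is conformal a.e., so $\|D_Pf\|^k = c^{k/2} = J_f$ a.e., giving analytic $1$-quasiregularity. I would phrase this as: fix the analytic definition, apply the elementary linear algebra lemma characterizing equality in the arithmetic–geometric type inequality $\|A\|^k \ge \det$-factor for the horizontal part of $D_Pf$, and translate back and forth.

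The step I expect to be the main (though still modest) obstacle is making the phrase ``$f$ satisfies the regularity assumptions of some (and thus all) definitions'' precise: one must check that the regularity hypothesis attached to each of the six notions (ACL in horizontal directions, membership in a horizontal Sobolev class, absolute continuity on curves, etc.) is preserved under the equivalences, so that the function $c$ in~\eqref{eq:equation of conformality} is well-defined independent of which definition one starts from. This is handled by the quantitative equivalences themselves — each implication in Theorem~\ref{thm:equivalence of quasiregular mappings} is proved by showing the target regularity holds, not merely the numerical distortion bound — so once Theorem~\ref{thm:equivalence of quasiregular mappings} is in hand the regularity transfers automatically, and the remaining content is the purely pointwise linear-algebraic characterization of conformality described above. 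I would close by remarking that specializing further to homeomorphisms recovers the $1$-quasiconformal statement of~\cite{clo14}.
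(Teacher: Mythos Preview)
Your overall strategy is correct and mirrors the paper: the corollary is stated there as an immediate consequence of Theorem~\ref{thm:equivalence of quasiregular mappings}, obtained exactly as you say by observing that each constant transformation $H\mapsto H^{Q-1}$, $H\mapsto H^{k-1}$, $\widehat H\mapsto \widehat H^{Q-1}$, etc.\ fixes $1$, and then invoking the AM--GM characterization of equality to get the conformality equation~\eqref{eq:equation of conformality}.

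There is one concrete slip to correct. You write that analytic $1$-quasiregularity is the condition $\|D_Pf(x)\|^{k}\le J_f(x)$ and that the converse direction gives $\|D_Pf\|^k=c^{k/2}=J_f$. Neither is right: the analytic definition (Definition~\ref{def:analytic def for qr}) uses the exponent $Q$ and the \emph{volume} Jacobian $J_f$, and in fact when $f^*h=cg$ one has $J_f=c^{Q/2}$ (by Lemma~\ref{lemma:coincidence of two Jacobian} together with the block structure of the Popp extension), not $c^{k/2}$. The pointwise inequality you actually want, with exponent $k$ and the \emph{horizontal} determinant $\det(g^{-1}f^*h)$, is precisely the \emph{horizontal} definition (Definition~\ref{def:horizontal qr}); this is the route the paper itself takes (see the remark immediately following Definition~\ref{def:horizontal qr}), and there the AM--GM argument applies directly: $\|g^{-1}f^*h\|^k\ge\det(g^{-1}f^*h)$ always, with equality iff all eigenvalues coincide, i.e.\ iff $f^*h=cg$. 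Going through the analytic or subRiemannian definition instead is possible but is not a pure AM--GM step; it requires the eigenvalue bounds of Proposition~\ref{eigenval_compr} on $\det(\ol g^{-1}\ol{f^*h})$. So keep your argument, but route the conformality equivalence through condition~3) rather than~4).
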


Taking into account the geometry of subRiemannian manifolds, the definitions of subRiemannian quasiregularity and horizontal quasiregularity that appear in Theorem~\ref{thm:equivalence of quasiregular mappings} and in Corollary~\ref{coro:equivalence of 1-quasiregular mappings} seem to be quite natural in the setting of equiregular subRiemannian manifolds. The motivation for the first of these definitions arises from the corresponding one in the setting of Riemannian manifolds~\cite{l14} and also from the one in the Heisenberg group~\cite{d99,d00}. The definitions of subRiemannian and horizontal quasiregularity are given in Section~\ref{sec:Equivalence of the horizontal and the subRiemannian quasiregularity}.

To define the notion of subRiemannian quasiregularity, we need local extensions of the horizontal metrics. We call these local extensions \textit{Popp extensions}. They are introduced in Section~\ref{sec:The Popp extension of a horizontal metric}.  Regarding the proof of Theorem~\ref{thm:equivalence of quasiregular mappings}, beside of subRiemannian calculus developed in the recent papers~\cite{mm95,flp14,gnw15}, the Popp extensions turns out to be the key in the proofs. In particular, it allows us to recover the metric quasiregularity from the subRiemannian one.

When $f\colon (M,g)\to (N,h)$ is a homeomorphism, the equivalences of $1)$, $2)$, $3)$ and $6)$ in Corollary~\ref{coro:equivalence of 1-quasiregular mappings} reduces to a recent result in the mentioned paper of Capogna et al.~\cite[Theorem 1.3]{clo14} (2016), where remarkably $1$-quasiconformal mappings were shown to be smooth when the underlying subRiemannian manifolds support suitable regularity theory for subelliptic $p$-Laplacian operators. The latter result, when $M$ and $N$ are Carnot groups, is due to Capogna and Cowling~\cite{cc06}.

In this paper, we did not consider Theorem~\ref{thm:equivalence of quasiregular mappings} in the setting of non-equiregular subRiemannian manifolds. It would be interesting to know whether a version of Theorem~\ref{thm:equivalence of quasiregular mappings} holds in such manifolds; see~\cite{a15} for more discussion along this direction.

This paper is organized as follows. After this introductory section, %Section~\ref{sec:introduction} containing the introduction 
Section~\ref{sec:Geometry of equiregular subRiemannian manifolds} contains the basic geometry of equiregular subRiemannian manifolds. We review the basic calculus in subRiemannian manifolds in Section~\ref{sec:Calculus on subRiemannian manifolds}. In Section~\ref{sec:Quasiregular mappings}, we consider various definitions of quasiregularity in the subRiemannian manifolds and collect some basic properties of quasiregular mappings. In Section~\ref{sec:The Popp extension of a horizontal metric}, we introduce Popp extensions of horizontal, i.e. subRiemannian, metrics. There we also introduce the subRiemannian quasiregularity and horizontal quasiregularity. We give a proof of Theorem~\ref{thm:equivalence of quasiregular mappings} in Section~\ref{sec:Proofs of the main result}. For the convenience of readers who are not familiar with the subRiemannian geometry, we also included in the appendix the proof of the equivalences of $1$-quasiregularity for mappings between Riemannian manifolds.

\section{Geometry of equiregular subRiemannian manifolds}\label{sec:Geometry of equiregular subRiemannian manifolds}
\newcommand{\Dst}{\mathcal{D}}

\subsection{Basic concepts in subRiemannian geometry}
Let $M$ be a $C^\infty$-smooth manifold of dimension $n$ and let $\Dst\subset TM$ be a subbundle of constant rank $k$.
Define the following \emph{flag of distributions} inductively for $s\in\N$:
\[
\begin{cases}
 	\Dst^{0} &:= \{0\} \\
	\Dst^{1} &:= \Gamma(\Dst) \\
	\Dst^{s+1} &:= \Dst^{s} + \Co^\infty(M)\text{-}\Span\left\{[X,Z] : X\in\Dst^{1},\ Z\in\Dst^{s} \right\} ,
\end{cases}
\]
where $\Gamma(\Dst)$ is the set of all smooth sections of $\Dst$. For any set $S$ of vector fields, $\Co^\infty(M)\text{-}\Span(S)$ is the set of linear combinations of elements of $S$ with coefficients in the ring $\Co^\infty(M)$ of smooth functions $M\to\R$.

By definition we have
\[
\{0\}\subset \Dst^{1} \subset\dots\subset\Dst^{s}\subset\Dst^{s+1}\subset\dots\subset\Vec(M), 
\]
where $\Vec(M)$ is the space of all vector fields on $M$.
For any point $p\in M$, we have a \emph{pointwise flag} 
\[
\Dst^{s}_p := \{ Z(p):Z\in\Dst^{s}\} \subset T_pM .
\]

To such a flag we associate the following functions $M\to\N\cup\{+\infty\}$:
\begin{description}
\item[Ranks] 	
	$k_s(p) := \dim(\Dst_p^{s})$, $s=1,2,\ldots$. 
\item[Growth vector] 
	$n_s(p) := k_s(p)-k_{s-1}(p) = \dim( \Dst_p^{s}/\scr \Dst_p^{s-1} )$. 
	
\item[Step]	
	$m(p) := \inf\{ s: \Dst_p^{s} = T_pM\}$. 

\item[Weight]	
	for $i\in\{1,\dots,n\}$: $w_i:=s$ if and only if $i\in\{k_{s-1}+1,\dots,k_s\}$. 

\end{description} 
We have that that $k=k_1\le k_2\le\dots\le n$ and also $\sum_{i=1}^s n_i = k_s$. The function $p\mapsto (n_1(p),n_2(p),\dots)\in\N^\N$ is usually called the \emph{growth vector}. Notice that if $m(p)<\infty$, then 
	\[
	\{0\}\subset\Dst_p^{1}\subset\dots\subset\Dst_p^{m(p)}=T_pM .
	\]
The subbundle $\Dst$ is usually called the \emph{horizontal distribution} and it is said to be \emph{equiregular} if $k_s$, and hence $n_s$ and $m$, are constants.
If $m<\infty$, then $\Dst$ is said to be \emph{bracket generating} and we have $k_s(p)=k_s=n$.
\begin{definition}[subRiemannian manifold]
 	An \emph{equiregular subRiemannian manifold} is a triple $(M,\Dst,g)$ where $M$ is a smooth and connected manifold, $\Dst\subset TM$ is a bracket generating equiregular subbundle, and $g$ is a smooth inner product on the fibers $\mD_p$, $p\in M$, of $\mD$. %function whose restriction to each fiber $\Dst_p$ is a scalar product. %]
\end{definition}

The inner product $g$ is called a \emph{horizontal metric} of $\Dst$. We use the notation $|v|_{g_p}$ or $\|v\|_{g_p}$ for the norm $\sqrt{g_p(v,v)}$ of a horizontal vector $v\in \Dst_p$. %, i.e., both the notation $|v|$ and $\|v\|$ are used for $\sqrt{g_p(v,v)}$. 
When there is no risk of confusion, we sometimes remove the subscript and write simply $|v|$ or $|v|_g$ etc.

\begin{definition}[subRiemannian distance]
	An absolutely continuous curve 
	$\gamma\colon [0,1]\to M$ is called a \emph{horizontal curve} if $\gamma'(t)\in \Dst_{\gamma(t)}$ for almost every $t\in[0,1]$. 
	
	The \emph{length} $l(\gamma)$ of a horizontal curve $\gamma\colon [0,1]\to M$ is
	\[
	l(\gamma) := \int_0^1\|\gamma'(t)\|\dd t.
	\]
	The \emph{subRiemannian distance} is defined by:
	\[
	d_g(p,q) := \inf_\gamma\left\{
	l(\gamma):
	\text{ $\gamma$ is a horizontal curve joining $p\in M $ to $q\in M$}
	\right\} .
	\]
\end{definition}

A subRiemannian manifold $M$ can be endowed in a canonical way with a smooth volume $\Vol_M$ that is called the \emph{Popp measure}.
The construction can be found in Section~\ref{sec:The Popp extension of a horizontal metric} below; see also \cite{br13,gj13,m02}. Consequently, when a subRiemannian manifold $M$ is endowed with the subRiemannian distance $d_g$ and the Popp measure $\Vol_M$, then $(M,d_g,\Vol_M)$ becomes a metric measure space. We will often drop the subscript $g$ from the distance $d_g$ when the context is clear.

\subsection{Tangent cone of an equiregular subRiemannian manifold}\label{subsec:tangent cone}
To introduce the notion of $P$-differentiability of a mapping between two equiregular subRiemannian manifolds, we need the notion of privileged coordinates and the construction of the metric tangent cone of equiregular subRiemannian manifolds. We briefly recall these nowadays well-known concepts and refer to literature for more details. Below $(M,\mD)$ is an equiregular subRiemannian manifold. 
%them here as they are nowadays well-known in literature.

\begin{definition}[Non-holonomic order]
	Let $f\colon M\to \R$ be a smooth function and $o\in M$.
	The \emph{non-holonomic order of $f$ at $p$} is defined as the maximum of $k\in\N$ such that for all $i<k$ and for any choice of horizontal vector fields $X_1,\dots,X_i\in\Dst$ it holds that
	\[
	X_1X_2\cdots X_if(p) = 0 .
	\]
\end{definition}
\begin{definition}
	Let $p\in M$ and let $U$ be an open neighborhood of $p$ in $M$.
	A system of coordinates $(x_1,\dots,x_n)\colon U\to \R^n$ centered at $p$ is a \emph{system of privileged coordinates} if each of the functions $x_i$ has non-holonomic order equaling the weight $w_i$.
\end{definition}
Privileged coordinates exists at all points of $M$. See \cite{b96,m02,cr15} for insights in this argument.

Let $(M,\mathcal{D},d)$ be a subRiemannian manifold with horizontal distribution $\Dst\subset TM$.
Since the results of this section we discuss are local, we can assume that $\Dst$ is generated by $k$ smooth vector fields $X_1,\dots,X_k\in\Gamma(TM)$ that are linearly independent at every point.

We next briefly discuss the construction of the metric tangent cone at a point $o\in M$ following the presentation in~\cite{gnw15}. See also \cite{b96,Gromov96,flp14} for more information about the construction of a tangent cone in the subRiemannian setting.

We assume that at each point $p\in M$ a system of privileged coordinates is chosen.
Let $\delta_\epsilon^p\colon U^p_\epsilon\to U^p_{\epsilon^{-1}}$ be the corresponding dilations , $\epsilon\in(0,+\infty)$, where $U^p_\epsilon\subset M$ is an open neighborhood of $p$. 
We assume $U^p_1\subset U^p_\epsilon$ for all $\epsilon\le 1$. Note that we do not assume anything on the dependence of $\delta_\epsilon^p$ on $p\in M$. The dilations $\delta_\epsilon^p$ permit to construct the metric tangent cone of $(M,\mathcal{D},d)$ at $p\in M$. 

For $\epsilon\in(0,1]$ and $j\in\{1,\dots,k\}$, define $X_j^{p,\epsilon}:=\epsilon\cdot d\delta_{\frac1\epsilon}\circ X_j\circ\delta_\epsilon\in\Gamma(TU^p_1)$.
Then there are $X_j^{p,0}\in\Gamma(TU^p_1)$ such that $X_j^{p,\epsilon}\to X_j^{p,0}$ uniformly on compact sets as $\epsilon\to 0$.
Up to shrinking the set $U^p_1$, we can assume the convergence to be uniform on $U^p_1$.
Notice that, for all $\epsilon\in(0,1]$, $(\delta_\epsilon^p)_*X_j^{p,0} = \dd\delta_\epsilon^p\circ X_j^{p,0}\circ\delta_\epsilon^p = \epsilon^{-1} X_j^{p,0}$.

For all $\epsilon\in[0,1]$, the vector fields $X_j^{p,\epsilon}$ define a subRiemannian distance $d_\epsilon^p$ on $U^p_1$. For $\epsilon\neq 0$, the metric space $(U^p_1,d_\epsilon^p)$ is isometric to a neighborhood of $p$ in $(M,\epsilon^{-1}d)$.
As $\epsilon\to 0$, $d_\epsilon^p$ converge uniformly on $U^p_1\times U^p_1$ to $d_0^p$.
This implies that $(U^p_1,d_0^p)$ is isometric to a neighborhood of the origin in the tangent cone of $(M,d)$ at $p$. We write $d^p_0$ or just $d^p$ for the tangent distance at the point $p$.

More can be said about the tangent cone.
Let $\g_p\subset\Gamma(TU^p_1)$ be the Lie algebra generated by the vector fields $\{X_j^{p,0}\}_{j=1}^k$.
This is a finite dimensional, nilpotent, stratified Lie algebra, whose first layer is the span of the vector fields $X_1^{p,0},\dots,X_k^{p,0}$.

Recall that a Lie algebra $\g$ is \emph{stratified} or \emph{graded} of step $m$ and rank $k$ if $\g$ is a direct sum  $\g = \bigoplus_{i=1}^m V_i$ of vector subspaces $\{V_i\}_{i=1}^{m}$, with $\dim(V_1)=k$ and $[V_1,V_i]=V_{i+1}$ for all $i=1,\ldots,m$.
When we speak of a stratified Lie algebra $\g$ we mean that the stratification $V_1,\dots,V_m$  is chosen.
If $\g'=\bigoplus_{i=1}^{m'}V'_i$ is another stratified Lie algebra, then a map $A\colon\g\to\g'$ is a \emph{homomorphism of stratified (or graded) Lie algebras} if it commutes with Lie brackets and $A(V_i)\subset V'_i$.

Since a stratified Lie algebra is nilpotent, the Baker-Campbell-Hausdorff formula is a finite sum and it defines a map $*\colon \g\times\g\to\g$ that makes $(\g,*)$ into a Lie group.
More precisely, $(\g,*)$ is the unique simply connected Lie group whose Lie algebra is $\g$.
With this identification between Lie algebra and Lie group, any Lie algebra morphism is a Lie group morphism as well.

The group $\G=(\g,*)$ becomes a \emph{Carnot group} if $V_1$ is endowed by a scalar product and $\G$ is endowed with the induced left-invariant subRiemannian metric. In the case of $\g_p\subset\Gamma(TU^p_1)$, where $\g_p$ is defined above, the first layer is $V_1=\Span\{X_1^{p,0},\dots,X_k^{p,0}\}$, and the scalar product on $V_1$ is chosen by declaring that $X_1^{p,0},\dots,X_k^{p,0}$ is an orthonormal basis.

The exponential map for vector fields $\exp\colon \Gamma(TU^p_1)\to U^p_1$ (which is not globally defined), restricted to $\g_p$ gives an isometry between an open neighborhood of $0\in\G_p=\g_p$ onto an open neighborhood of $p$ in $(U^p_1,d^p_0)$.
Moreover, by results in~\cite{cr15}, we may start with some special privileged coordinates on $U^p_1$ so that they correspond to exponential coordinates of the group $\G_p$.
Therefore, if $\bar p$ is another point on another subRiemannian manifolds, and $A\colon \g_p\to\g_{\bar p}$ is a Lie algebra homomorphism, then we can see $A$ as a map $U^p_1\to U^{\bar p}_1$ that is linear in these coordinates.

\section{Calculus on subRiemannian manifolds}\label{sec:Calculus on subRiemannian manifolds}

\subsection{$P$-differential and Stepanov's theorem}\label{subsec:P-differential and Stepanov's theorem}
In this section, we introduce the notion of $P$-differentiability and recall a version of the Stepanov's theorem in the equiregular subRiemannian setting obtained recently in~\cite{gnw15}. This definition was first introduced for mappings between two Carnot groups by Pansu~\cite{p89} in his study of quasi-isometries of rank-one symmetric spaces, and was generalized later by Margulis and Mostow~\cite{mm95} for mappings between two equiregular subRiemannian manifolds (with the same dimension), where quasiconformal mappings were shown to be $P$-differentiable almost everywhere (with respect to the Popp measure). We refer the readers to~\cite[Section 4]{gnw15} and~\cite[Section 5]{flp14} for a detailed description of the $P$-differential in the equiregular subRiemannian setting. 

Let $(M,g)$ and $(N,h)$ be two equiregular subRiemannian manifolds of homogeneous dimension $n$ and rank $k$. Let $f\colon M\to N$ be a Borel mapping\footnote{Recall that a mapping $f\colon X\to Y$ is Borel if the preimage of each open set in $Y$ is a Borel set in $X$.}, $p\in M$ and $\bar p:=f(p)\in N$.

Following the notation in Section~\ref{subsec:tangent cone}, we introduce the concept of $P$-differential.
\begin{definition}[$P$-differential]\label{def1738}
	We say that $f$ is \textit{$P$-differentiable} at $p\in M$ if there exists a homomorphism of graded Lie algebras $A\colon \g_p\to\g_{\bar p}$ such that 
	\[
	\lim_{\g_p\ni X\to 0} \frac{d_h\left(\exp(A[X])(\bar p),f(\exp(X)(p))\right)}{\|X\|}=0,
	\]
	where $\|\cdot\|$ is any homogeneous semi-norm on $\g_p$. When $f$ is $P$-differentiable at $p$, we write $Df(p)$ instead of $A$ for the $P$-differential.
\end{definition}

We caution that the definition of $P$-differential depends on a choice of two systems of privileged coordinates, one centered at $p$ and the other at $\bar p$.
However, different choices of privileged coordinates ``commutes by isomorphisms". That is, if $\g'_p$ and $\bar\g'_{\bar p}$ are the graded Lie algebras that arise from a different choice of privileged coordinates, then there are isomorphisms of graded Lie algebras $F\colon \g_p\to\g'_p$ and $\bar F\colon \bar\g_{\bar p}\to\bar\g'_{\bar p}$ with the following property: 
for any map $f\colon M\to N$ with $f(p)=\bar p$ and with a $P$-differential $A\colon \g_p\to\g_{\bar p}$, a homomorphism of graded Lie algebras $A'\colon \bar\g'_{ p}\to\bar\g'_{\bar p}$ is the $P$-differential of $f$ at $p$ if and only if the following diagram commutes:
\[
\xymatrix{
	\bar\g_{ p}\ar[d]_{F}\ar[r]^A & \bar\g_{\bar p}\ar[d]^{\bar F} \\
	\bar\g'_{ p}\ar[r]_{A'} & \bar\g'_{\bar p}
}
\]

When consider a local property of our mapping $f\colon M\to N$, it is convenient to identify a local neighborhood of a point in an equiregular subRiemannian manifold with a ball in $\R^n$ in the following way: for each point $p\in M$, fix a neighborhood $U_p$ of $p$ and a smooth diffeomorphism $\varphi\colon U_p\to B(0,r)\subset \R^n$. Then $\varphi$ induces a natural subRiemannian structure on $\R^n$ by pushing forward the corresponding subRiemannian structure on $M$ so that $\varphi$ becomes an isometry with respect to the subRiemannian distance on $M$ and the induced subRiemannian distance on $\R^n$. With a slight abuse of notation, in privileged coordinates, the $P$-differential is a linear mapping $Df(0)=Df(p)\colon \R^n\to\R^n$ such that 
\begin{equation}\label{eq:P-diff in coordinate}
\lim_{y\to 0} \frac{d_h(Df(0)y, f(y))}{\|y\|} = 0
\end{equation}
where $\|\cdot\|$ is any homogeneous semi-norm on $\R^n$.
By the Ball-Box theorem (see e.g.~\cite{Gromov96}), it follows that
\begin{equation*}
	\lim_{y\to 0} \frac{d_h(Df(0)y, f(y))}{d_g(0,y)} = 0,
\end{equation*}
or, in other words,
\begin{equation}\label{eq:P-diff in coordinate 2}
 d_h(f(p),f(y)) = d_h(f(p),Df(0)y) + o\left(d_g(0,y) \right).
\end{equation}
Indeed, privileged coordinates identify a neighborhood of a point $p$ with a neighborhood of the origin $0$ in the Carnot group tangent to $M$ at $p$ endowed with the exponential coordinates.

The following Stepanov-type result was obtained in~\cite{gnw15}.
\begin{theorem}[Theorem A,~\cite{gnw15}]\label{thm:Differentiability Stepanov}
Let $f\colon (M,g)\to (N,h)$ be a Borel mapping between two equiregular subRiemannian manifolds.
	Then $f$ is $P$-differentiable at $\Vol_M$-a.e. 
	\[
	L(f) := \left\{x\in M: \limsup_{y\to x} \frac{d_h(f(y),f(x))}{d_g(y,x)} < \infty \right\}.
	\]
\end{theorem}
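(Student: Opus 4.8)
The plan is to reduce the statement to the classical Stepanov theorem in $\R^n$ by working in privileged coordinates and exploiting the fact that the subRiemannian distance is locally comparable (via the Ball-Box theorem) to an anisotropic pseudo-distance that is already handled in the Carnot/Euclidean framework.

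First I would fix a point $x_0\in M$ and, using the local chart $\varphi\colon U_{x_0}\to B(0,r)\subset\R^n$ together with privileged coordinates centered at each point, transfer the problem to a neighborhood of the origin in $\R^n$ equipped with the pushed-forward subRiemannian structure; by the isometry property of $\varphi$ this does not change $L(f)$ nor the set where $f$ is $P$-differentiable. Since the claim is local and $M$ can be covered by countably many such charts, it suffices to prove that $f$ is $P$-differentiable at a.e.\ point of $L(f)\cap U_{x_0}$. Next I would invoke the Ball-Box theorem to pass between the genuine subRiemannian distance $d_g$ and the box quasi-distance $\rho(x,y)\asymp\sum_i |y_i-x_i|^{1/w_i}$ built from the weights $w_i$; this converts the finiteness-of-upper-derivative condition defining $L(f)$ into an anisotropic Lipschitz-type condition on the chart, and it also turns the desired $P$-differentiability into the statement that $f$ admits, at a.e.\ such point, a homogeneous (graded) linear approximation $Df(p)$ in the sense of \eqref{eq:P-diff in coordinate}.

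The core step is the Stepanov-type covering argument. On $L(f)$ one has $\limsup_{y\to x} d_h(f(y),f(x))/d_g(y,x)<\infty$ at every point, so by splitting $L(f)$ into countably many pieces on which the bound and the ``radius of validity'' are uniform, one reduces to the case where $f$ is globally anisotropically Lipschitz on a measurable set $E$. On such a set the Margulis--Mostow / Pansu-type Rademacher theorem (the a.e.\ $P$-differentiability of globally Lipschitz, hence in particular quasiconformal-type, maps) applies at density points of $E$, and a standard Federer-type density argument upgrades a.e.\ differentiability on each piece to a.e.\ differentiability on all of $L(f)$: at a point of $L(f)$ that is a density point of the appropriate Lipschitz piece $E_j$, one approximates $f$ near that point by its restriction to $E_j$, uses $P$-differentiability there, and controls the error on $M\setminus E_j$ by the uniform Lipschitz bound and the vanishing density of the complement. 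This is exactly the mechanism by which Stepanov's theorem is deduced from Rademacher's in the Euclidean setting, and it transplants verbatim once the Ball-Box comparison is in place.

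I expect the main obstacle to be the density-point step in the anisotropic geometry: the metric balls are boxes of radically different scales in different coordinate directions, so the usual ``the error on the bad set is small because the bad set has small measure in a small ball'' estimate must be carried out with respect to the Popp measure and the box metric, and one has to check that density points of $E_j$ with respect to Popp measure are the right notion to make the comparison $d_h(f(y),f(x))\le d_h(f(y'),f(x))+d_h(f(y'),f(y))$ — with $y'\in E_j$ chosen close to $y$ — yield the $o(d_g(0,y))$ error in \eqref{eq:P-diff in coordinate 2}. Since this is precisely the content already established in \cite{gnw15} (Theorem~A there is the statement we are asked to prove), in the paper this theorem is simply quoted; the proposal above is the route one would take to reconstruct it. All other ingredients — the existence of privileged coordinates, the Ball-Box theorem, the nilpotent approximation of the vector fields $X_j^{p,\epsilon}\to X_j^{p,0}$ — are available from Section~\ref{subsec:tangent cone} and the cited literature.
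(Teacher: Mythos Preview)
The paper does not prove this theorem; it is imported wholesale from \cite{gnw15} as Theorem~A there (the present paper says so explicitly in the line preceding the statement). You already note this at the end of your proposal, so there is no ``paper's own proof'' to compare your argument against.

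As for the proposal itself: the architecture --- localize via privileged coordinates, decompose $L(f)$ into countably many sets on which $f$ is uniformly Lipschitz at a fixed scale, apply a Rademacher-type theorem on each piece, and glue via a density argument --- is the standard Stepanov-from-Rademacher scheme and is sound in outline. The one place where you are glossing over the real difficulty is the Rademacher input. You invoke ``the Margulis--Mostow / Pansu-type Rademacher theorem'' as if it were available off the shelf, but Pansu's theorem \cite{p89} is for Lipschitz maps between \emph{Carnot groups}, and the Margulis--Mostow result \cite{mm95} is for \emph{quasiconformal} maps between equiregular subRiemannian manifolds, not for Lipschitz maps. The a.e.\ $P$-differentiability of a Lipschitz map between two general equiregular subRiemannian manifolds (where the tangent Carnot group varies from point to point) is precisely the substantive content of \cite{gnw15}; the decomposition-and-density wrapper you describe is routine once that is in hand. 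So your proposal correctly identifies the skeleton of the argument, but it locates the hard step in the wrong place: the density estimate in the anisotropic geometry is a genuine but manageable technicality, whereas the Rademacher ingredient is the theorem itself in disguise.
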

In reference~\cite{gnw15} this notion of differentiability is referred as geometric differentiability. This theorem is a subRiemannian counterpart of Lemma~\ref{lemma:differentiability}, which we will prove independently later.

For our later purpose of this paper, it is important to introduce the notion of a contact mapping.
\begin{definition}[Contact mappings]\label{def:contact map}
	A mapping $f\colon M\to N$ between two equiregular subRiemannian manifolds is said to be \textit{contact} if $f$ maps the horizontal distribution to horizontal distribution almost everywhere, i.e.,
	$$
	f_*\colon \mD_p\to \mD_{f(p)}\quad \Vol_M\text{-a.e. } p\in M.
	$$
\end{definition}

Note that if $f$ is $P$-differentiable at $p\in M$, then $f_*\colon \mathcal{D}_p\to \mathcal{D}_{f(p)}$; see~\cite[Section 4]{gnw15} or~\cite[Section 5.5]{flp14}. It follows immediately that $f\colon M\to N$ is contact whenever it is $P$-differentiable $\Vol_M$-a.e. in $M$.

\subsection{Sobolev spaces}\label{subsec:Sobolev spaces}
We introduce next the Sobolev spaces of mappings between two subRiemannian manifolds based on the \textit{upper gradients}. More detailed descriptions of Sobolev spaces based on this approach can be found in~\cite{s00,hkst12}.

Let $\Gamma$ be a family of curves in an equiregular subRiemannian manifold $M=(M,g)$. A Borel function $\rho\colon M\rightarrow [0,\infty]$ is \textit{admissible} for $\Gamma$ if for every locally rectifiable curve $\gamma\in \Gamma$,
\begin{equation}\label{admissibility}
\int_\gamma \rho\,ds\geq 1\text{.}
\end{equation}
The \textit{$p$-modulus} of $\Gamma$, $p\geq 1$, is defined as
\begin{equation*}
\modulus_p(\Gamma) = \inf \left\{ \int_M \rho^p\,d\Vol_M:\text{$\rho$ is admissible for $\Gamma$} \right\}.
\end{equation*}

A family of curves is called \textit{$p$-exceptional} if it has $p$-modulus zero. We say that
a property of curves holds for \textit{$p$-a.e. curve} if the collection of curves for which
the property fails to hold is $p$-exceptional.

\begin{definition}\label{def:upper gradient}
	A Borel function $g\colon M\rightarrow [0,\infty]$ is called an \textit{upper gradient} for a map $f\colon M\to N$ if for every rectifiable curve $\gamma\colon [0,1]\to M$, we have the inequality 
	\begin{equation}\label{ugdefeq}
	\int_\gamma g\,ds\geq d_h\left(f(\gamma(1)),f(\gamma(0))\right)\text{.}
	\end{equation}
	Here $ds$ is the length element on $M$. If inequality \eqref{ugdefeq} merely holds for $p$-almost every curve, then $g$ is called a \textit{$p$-weak upper gradient} for $f$.  When the exponent $p$ is clear, we omit it. 
\end{definition}

The concept of upper gradient were introduced in~\cite{hk98}. It was initially called ``very weak gradient", but the befitting term ``upper gradient" was soon suggested. Functions with $p$-integrable $p$-weak upper gradients were subsequently studied in~\cite{km98}. By \cite[Lemma 6.2.2]{hkst12}, a mapping $f$ has a $p$-weak upper gradient in $L^p_{loc}(M)$ if and only if it has an actual upper gradient in $L^p_{loc}(M)$. 

A $p$-weak upper gradient $g$ of $f$ is \textit{minimal} if for every $p$-weak upper gradient $\tilde{g}$ of $f$ there holds that $\tilde{g}\geq g$ $\Vol_M$-almost everywhere. If $f$ has an upper gradient in $L^p_{loc}(M)$, then $f$ has a unique (up to sets of $\Vol_M$-measure zero) minimal $p$-weak upper gradient by~\cite[Theorem 6.3.20]{hkst12}.  In this situation, we denote the minimal upper gradient by $g_{f}$.

In view of the above result, the minimal $p$-weak upper gradient $g_u$ of a function $u$ on $M$ should be thought of as a substitute for $|\nabla_H u|$, or the length of a horizontal gradient, for functions defined on subRiemannian manifolds.

Fix a Banach space $\mathbb{V}$, and we first define the Sobolev space $N^{1,p}(M,\mathbb{V})$ of $\mathbb{V}$-valued mappings. Let $\tilde{N}^{1,p}(M,\mathbb{V})$ denote the collection of all maps $f\in L^p(M,\mathbb{V})$ that have an upper gradient in $L^p(M)$. We equip it with seminorm
\begin{equation*}
\|f\|_{\tilde{N}^{1,p}(M,\mathbb{V})}=\|f\|_{L^p(M,\mathbb{V})}+\|g_f\|_{L^p(M)},
\end{equation*}
where $g_f$ is the minimal $p$-weak upper gradient of $f$. We obtain a normed space $N^{1,p}(M,\mathbb{V})$ by passing to equivalence classes of functions in $\tilde{N}^{1,p}(M,\mathbb{V})$ with respect to equivalence relation: $f_1\sim f_2$ if $\|f_1-f_2\|_{\tilde{N}^{1,p}(M,\mathbb{V})}=0$. Thus
\begin{equation}\label{eq:definition of Nowton-Sobolev space}
N^{1,p}(M,\mathbb{V}):=\tilde{N}^{1,p}(M,\mathbb{V})/\{f\in \tilde{N}^{1,p}(M,\mathbb{V}): \|f\|_{\tilde{N}^{1,p}(M,\mathbb{V})}=0\}.
\end{equation}

Let $\tilde{N}_{loc}^{1,p}(M,\mathbb{V})$ be the vector space of functions $f\colon M\to \mathbb{V}$ with the property that every point $x\in M$ has a neighborhood $U_x$ in $M$ such that $f\in \tilde{N}^{1,p}(U_x,\mathbb{V})$. Two functions $f_1$ and $f_2$ in $\tilde{N}_{loc}^{1,p}(M,\mathbb{V})$ are said to be equivalent if every point $x\in M$ has a neighborhood $U_x$ in $M$ such that the restrictions $f_1|_{U_x}$ and $f_2|_{U_x}$ determine the same element in $\tilde{N}^{1,p}(U_x,\mathbb{V})$. The local Sobolev space $N_{loc}^{1,p}(M,\mathbb{V})$ is the vector space of equivalent classes of functions in $\tilde{N}_{loc}^{1,p}(M,\mathbb{V})$ under the preceding equivalence relation.

To define the Sobolev space $N^{1,p}(M,N)$ of mappings $f\colon M\to N$, we first fix an isometric embedding $\varphi$ of $N$ into some Banach space $\mathbb{V}$. Then the Sobolev space $N^{1,p}(M,N)$ consists of all mappings $f\colon M\to N$ with $\varphi\circ f\in N^{1,p}(M,\mathbb{V})$.

For every subRiemannian manifold $(M,g)$, let $\mathcal{C}_\varepsilon(T)$ be the collection of curves $\gamma$ such that $d_g(\gamma(b),\gamma(a))\geq \varepsilon$. We need the following useful characterization of Sobolev spaces.
\begin{theorem}[Theorem 3.10, \cite{w12proc}]\label{thm:characterization of Sobolev spaces}
	Let $p>1$ and let $M$ be compact. Then $f\in N^{1,p}(M,N)$ if and only if
	\begin{align}\label{eq:mod for Sob regularity}
	\liminf_{\varepsilon\to 0}\varepsilon^{p}\Modd_p\big(f^{-1}(\mathcal{C}_\varepsilon(N))\big)<\infty.
	\end{align}
	Moreover, if this is the case, then the liminf on the left-hand side is an actual limit, and we have
	\begin{align*}
	\|g_f\|_{L^p(M)}^p=\lim_{\varepsilon\to 0} \varepsilon^{p}\Modd_p\big(f^{-1}(\mathcal{C}_\varepsilon(N))\big)
	\end{align*}
\end{theorem}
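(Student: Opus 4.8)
The plan is to establish both implications together, pinning $\varepsilon^{p}\Modd_p\big(f^{-1}(\mathcal{C}_\varepsilon(N))\big)$ between $\|g_f\|_{L^p(M)}^p$, as an upper bound valid for \emph{every} $\varepsilon$, and a quantity tending to $\|g_f\|_{L^p(M)}^p$, as a lower bound realized along a suitable sequence; the squeeze then forces the $\liminf$ to be a genuine limit with the asserted value. For the easy direction, suppose $f\in N^{1,p}(M,N)$ with minimal $p$-weak upper gradient $g_f$. For $p$-a.e.\ rectifiable $\gamma$ one has $\int_\gamma g_f\,ds\ge d_h\big(f(\gamma(b)),f(\gamma(a))\big)$, so for such a $\gamma$ lying in $f^{-1}(\mathcal{C}_\varepsilon(N))$ the right-hand side is $\ge\varepsilon$; hence $g_f/\varepsilon$ is admissible for $f^{-1}(\mathcal{C}_\varepsilon(N))$ outside a curve family of $p$-modulus zero, whence $\varepsilon^{p}\Modd_p\big(f^{-1}(\mathcal{C}_\varepsilon(N))\big)\le\|g_f\|_{L^p(M)}^p$ for all $\varepsilon>0$, and in particular the $\limsup$ in \eqref{eq:mod for Sob regularity} is finite.

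For the substantive direction, assume the $\liminf$ in \eqref{eq:mod for Sob regularity} equals a finite number $C$ and pick $\varepsilon_j\downarrow 0$ with $\varepsilon_j^{p}\Modd_p\big(f^{-1}(\mathcal{C}_{\varepsilon_j}(N))\big)\to C$. Choose admissible $\rho_j$ for $f^{-1}(\mathcal{C}_{\varepsilon_j}(N))$ with $\int_M\rho_j^{p}\,d\Vol_M\le\Modd_p\big(f^{-1}(\mathcal{C}_{\varepsilon_j}(N))\big)+2^{-j}$ and put $g_j:=\varepsilon_j\rho_j$, so that $\limsup_j\|g_j\|_{L^p(M)}^p\le C$. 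The geometric heart is a subdivision estimate: given a rectifiable $\gamma\colon[a,b]\to M$ with $D:=d_h\big(f(\gamma(b)),f(\gamma(a))\big)>0$ and $\varepsilon_j<D$, continuity of $f\circ\gamma$ (we take $f$ continuous, as in the applications) lets us set $s_0:=a$ and let $s_\ell$ be the first $t\ge s_{\ell-1}$ with $d_h\big(f(\gamma(a)),f(\gamma(t))\big)=\ell\varepsilon_j$ for $\ell=1,\dots,\lfloor D/\varepsilon_j\rfloor$; the triangle inequality gives $d_h\big(f(\gamma(s_\ell)),f(\gamma(s_{\ell-1}))\big)\ge\varepsilon_j$, so each subcurve $\gamma|_{[s_{\ell-1},s_\ell]}$ lies in $f^{-1}(\mathcal{C}_{\varepsilon_j}(N))$ and contributes $\int_{\gamma|_{[s_{\ell-1},s_\ell]}}\rho_j\,ds\ge 1$; summing over $\ell$ yields $\int_\gamma g_j\,ds=\varepsilon_j\int_\gamma\rho_j\,ds\ge\varepsilon_j\lfloor D/\varepsilon_j\rfloor\ge D-\varepsilon_j$. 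Since $\{g_j\}$ is bounded in the reflexive space $L^p(M)$, pass to a weakly convergent subsequence $g_j\rightharpoonup g$ and apply Mazur's lemma to obtain convex combinations $\widehat g_j$ of $\{g_i:i\ge j\}$ with $\widehat g_j\to g$ strongly in $L^p(M)$ and $\|g\|_{L^p(M)}^p\le\liminf_j\|g_j\|_{L^p(M)}^p\le C$; each $\widehat g_j$ still satisfies $\int_\gamma\widehat g_j\,ds\ge D-\varepsilon_j$ for every rectifiable $\gamma$. Fuglede's lemma now gives a further subsequence along which $\int_\gamma\widehat g_j\,ds\to\int_\gamma g\,ds$ for $p$-a.e.\ $\gamma$; letting $j\to\infty$ gives $\int_\gamma g\,ds\ge d_h\big(f(\gamma(b)),f(\gamma(a))\big)$ for $p$-a.e.\ $\gamma$. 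Thus $g$ is a $p$-weak upper gradient of $f$ in $L^p(M)$, and since $M$ is compact the continuous map $f$ has $\varphi\circ f\in L^p(M,\mathbb{V})$, so $f\in N^{1,p}(M,N)$ with $\|g_f\|_{L^p(M)}^p\le\|g\|_{L^p(M)}^p\le C$.

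Combining the two bounds, $\|g_f\|_{L^p(M)}^p\le C=\liminf_{\varepsilon\to0}\varepsilon^{p}\Modd_p\big(f^{-1}(\mathcal{C}_\varepsilon(N))\big)\le\limsup_{\varepsilon\to0}\varepsilon^{p}\Modd_p\big(f^{-1}(\mathcal{C}_\varepsilon(N))\big)\le\|g_f\|_{L^p(M)}^p$, so all the inequalities are equalities: the $\liminf$ is in fact a limit and equals $\|g_f\|_{L^p(M)}^p$. I expect the main obstacle to be the substantive direction, and within it the passage from the near-optimal admissible densities $\rho_j$ — which only ``see'' curves whose $f$-image has diameter $\ge\varepsilon_j$ — to a single honest $p$-weak upper gradient; this is precisely where the continuity-based subdivision of an \emph{arbitrary} rectifiable curve together with the combined use of weak $L^p$-compactness, Mazur's lemma and Fuglede's lemma must be orchestrated so that the defining inequality survives on $p$-almost every curve.
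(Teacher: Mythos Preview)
The paper does not supply its own proof of this theorem: it is quoted verbatim as Theorem~3.10 of Williams~\cite{w12proc} and used as a black box (see the proof of Proposition~\ref{prop:analytic equal geometric}, part ii)). Your argument is correct and is essentially the standard one underlying Williams' result: the admissibility of $g_f/\varepsilon$ gives the upper bound, while the subdivision trick together with weak $L^p$-compactness, Mazur's lemma, and Fuglede's lemma produces a $p$-weak upper gradient from near-optimal test functions, yielding the matching lower bound and hence the existence of the limit.

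One small remark: in the subdivision step you tacitly use continuity of $f$, which is not part of the hypothesis of the theorem as stated but is always satisfied in the applications in this paper (where $f$ is a branched covering). Also, when you assert that the convex combination $\widehat g_j$ still satisfies $\int_\gamma\widehat g_j\,ds\ge D-\varepsilon_j$, this uses that the $\varepsilon_i$ are decreasing in $i$, so that $\sum_{i\ge j}c_i(D-\varepsilon_i)\ge D-\varepsilon_j$; you have this implicitly but it is worth making explicit.
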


The following differentiability result will be used frequently in our later sections.
\begin{proposition}[Proposition 5.16,~\cite{gnw15}]\label{prop:on differentiability}
	Let $f\colon M\to N$ be an open mapping between two equiregular subRiemannian manifolds of homogeneous dimension $Q\geq 2$. If $f\in N^{1,Q}_{loc}(M,N)$, then $f$ is $P$-differentiable a.e. in $M$.
\end{proposition}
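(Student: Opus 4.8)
The plan is to prove Proposition~\ref{prop:on differentiability} by reducing it to the Stepanov-type Theorem~\ref{thm:Differentiability Stepanov}. Recall that the latter asserts that any Borel mapping $f\colon M\to N$ is $P$-differentiable $\Vol_M$-almost everywhere on the set $L(f)$ where the pointwise upper Lipschitz constant $\Lip f(x) := \limsup_{y\to x} d_h(f(y),f(x))/d_g(y,x)$ is finite. So it suffices to show that, under the hypotheses $f$ open and $f\in N^{1,Q}_{\loc}(M,N)$, we have $\Lip f(x)<\infty$ for $\Vol_M$-a.e.\ $x\in M$, i.e.\ $\Vol_M(M\setminus L(f))=0$.

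First I would pass to a local setting: cover $M$ by coordinate neighborhoods, use the identification of a neighborhood of each point with a ball $B(0,r)\subset\R^n$ carrying the induced subRiemannian structure, and work on a fixed such ball; $P$-differentiability and $N^{1,Q}_{\loc}$-membership are local notions. On such a ball, $f$ has a $Q$-integrable upper gradient $g_f\in L^Q_{\loc}$. The key analytic input is the pointwise estimate controlling the local dilatation of a Newtonian–Sobolev mapping in terms of a maximal function of its minimal upper gradient: by the Poincaré inequality available on equiregular subRiemannian manifolds (the spaces are Ahlfors $Q$-regular and support a $1$-Poincaré inequality, these being standard facts cited in the references) together with a Riesz-potential / telescoping argument over dyadic subRiemannian balls, one obtains that for a.e.\ $x$ and small $r$,
\[
\frac{d_h(f(y),f(x))}{d_g(x,y)} \lesssim \big(M_{2r}(g_f^Q)(x)\big)^{1/Q} \quad\text{whenever } d_g(x,y)\le r,
\]
where $M$ denotes the restricted (subRiemannian) Hardy–Littlewood maximal operator. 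Since $g_f\in L^Q_{\loc}$, the maximal function $M(g_f^Q)$ is finite $\Vol_M$-a.e.\ by the weak-type $(1,1)$ bound (the doubling property of the Popp measure gives the maximal inequality). Hence $\Lip f(x)<\infty$ for $\Vol_M$-a.e.\ $x$, so $\Vol_M(M\setminus L(f))=0$, and Theorem~\ref{thm:Differentiability Stepanov} yields $P$-differentiability a.e.

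I would organize the steps as: (1) localize and record that $f$ has $g_f\in L^Q_{\loc}$; (2) recall Ahlfors $Q$-regularity of $(M,d_g,\Vol_M)$ and the validity of a Poincaré-type inequality on equiregular subRiemannian manifolds, with the resulting telescoping/Riesz-potential pointwise bound on $d_h(f(y),f(x))/d_g(x,y)$ by a maximal function of $g_f^Q$; (3) invoke the Hardy–Littlewood maximal inequality for the doubling measure $\Vol_M$ to conclude $M(g_f^Q)<\infty$ a.e., hence $\Lip f<\infty$ a.e.; (4) apply Theorem~\ref{thm:Differentiability Stepanov} on the full-measure set $L(f)$. The role of the openness hypothesis is mild here — it is not needed for the almost-everywhere finiteness of $\Lip f$, but it is what makes $f$ Borel (an open map between separable metric spaces is Borel) so that Theorem~\ref{thm:Differentiability Stepanov} applies; alternatively openness together with $N^{1,Q}_{\loc}$ already forces continuity in the cases of interest.

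The main obstacle I anticipate is step (2): justifying the pointwise Lipschitz-type estimate in terms of the maximal function of the minimal upper gradient. In the Euclidean or metric-measure setting this is the classical Hajłasz–Koskela argument, but here one must be careful that it is the \emph{subRiemannian} distance $d_h$ on the target and $d_g$ on the source that appear, that the balls in the telescoping sum are subRiemannian balls, and that the upper gradient inequality \eqref{ugdefeq} is used along horizontal curves realizing (almost) the subRiemannian distance. One needs the $(1,Q)$-Poincaré inequality to hold on $M$ — this is where equiregularity and the Ball–Box theorem enter, giving the local Ahlfors regularity and the chain/Poincaré structure — and a standard chaining of balls $B(x,2^{-j}r)$ to pass from the averaged Poincaré estimate to a genuine pointwise control of oscillation. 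Once this estimate is in hand, everything else is a routine application of the maximal theorem and of the already-established subRiemannian Stepanov theorem.
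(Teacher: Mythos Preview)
The paper does not give its own proof of this proposition; it is quoted as \cite[Proposition~5.16]{gnw15}. Your reduction to the Stepanov-type Theorem~\ref{thm:Differentiability Stepanov} is the natural strategy and almost certainly the one used in \cite{gnw15}. However, the core of your argument --- the one-sided pointwise bound
\[
\frac{d_h(f(y),f(x))}{d_g(x,y)} \lesssim \big(M_{2r}(g_f^Q)(x)\big)^{1/Q}\quad\text{for a.e. }x\text{ and all nearby }y
\]
--- is false at the critical exponent $p=Q$. The Haj\l asz--Koskela telescoping/Poincar\'e estimate is inherently \emph{two-sided}: it controls $d_h(f(x),f(y))$ by $d_g(x,y)\big[(Mg_f^Q(x))^{1/Q}+(Mg_f^Q(y))^{1/Q}\big]$, and the $y$-term cannot be dropped. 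Indeed, Cesari's classical example furnishes a continuous $u\in W^{1,n}(\R^n)$ that is nowhere differentiable; by Stepanov this forces $\Lip u=\infty$ a.e., while of course $M(|\nabla u|^n)<\infty$ a.e. So the implication ``$f\in N^{1,Q}_{\loc}$ and continuous $\Rightarrow \Lip f<\infty$ a.e.'' that you are trying to establish simply fails without further hypotheses.

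This is also why your assessment that the openness hypothesis is ``mild'' is mistaken: openness is exactly the missing ingredient. The way it is used is not via Borel measurability but through a Gehring--Lehto/V\"ais\"al\"a-type mechanism: membership in $N^{1,Q}_{\loc}$ yields absolute continuity along $Q$-a.e.\ curve, hence existence of horizontal partial derivatives $X_i f$ a.e., and then openness upgrades ``partial derivatives exist a.e.'' to ``$\Lip f<\infty$ a.e.'' (equivalently, to $P$-differentiability a.e.). That upgrade --- which fails for general continuous Sobolev maps --- is the actual substance of \cite[Proposition~5.16]{gnw15}, and it requires a genuinely different idea (oscillation/monotonicity estimates exploiting that $f(B)$ is open) than the maximal-function bound you propose.
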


We also remark that if a continuous open mapping $f\colon M\to N$ is $P$-differentiable at $p\in M$ and $g\colon f(M)\to N'$ is $P$-differentiable at $f(p)\in N$, then it follows that $g\circ f$ is $P$-differentiable at $p\in M$ and $D(g\circ f)(p)=Dg(f(p))\circ Df(p)$, cf.~\cite[Lemma 5.8]{flp14}.

\section{Quasiregular mappings}\label{sec:Quasiregular mappings}
In this section, we will present various natural definitions of quasiregular mappings $f \colon (M,g)\to (N,h)$, and discuss their basic analytic properties. We will present here four different definitions in total, and the remaining two definitions appearing in our main theorem, Theorem~\ref{thm:equivalence of quasiregular mappings}, will be introduced in Section~\ref{sec:The Popp extension of a horizontal metric}.

Throughout this section, $M$ and $N$ are equiregular subRiemannian $n$-manifolds of homogeneous dimension $Q\geq 2$ with horizontal distributions $\mathcal{D}_M$ and $\mathcal{D}_N$ equipped with horizontal metrics $g$ and $h$ respectively. We equip the manifolds with Popp measures $\Vol_M$ and $\Vol_N$, and the meaning of a.e. is understood with respect to the corresponding Popp measure. 

\subsection{Definitions of quasiregular mappings}\label{subsec:Definitions of quasiregular mappings}

We first recall the so-called \emph{weak} metrically quasiregular mappings. For this, we set $h_f(x)=\liminf_{r\to 0}H_f(x,r)$, where $H_f(x,r)$ is defined in the introduction in~\eqref{pw_distort}. The metrically quasiregular mappings were introduced in Definition~\ref{def:metric quasiregular map}.
\begin{definition}[Weak metrically quasiregular mappings]\label{def:weak metric qr}
A branched covering $f\colon (M,g)\to (N,h)$ is said to be \textit{weakly metrically $H$-quasiregular} if it is constant or if it satisfies the following two conditions 
\begin{itemize}
\item[1)] $h_f(p)<\infty$ for all $p\in M$,
\item[2)] $h_f(p)\leq H$ for almost every $p\in M$. 
\end{itemize}
We say that $f\colon (M,g)\to (N,h)$ is \textit{weakly metrically quasiregular} if it is weakly metrically $H$-regular for some $1\leq H<\infty$. 
\end{definition}

If $f\colon M\to N$ is a continuous mapping, $q\in N$ and $A\subset M$, we use the notation
\begin{equation}\label{def:multiplicity function}
N(q,f,A)=\text{card}\{f^{-1}(q)\cap A\}
\end{equation}
for the multiplicity function.

\begin{definition}[Analytically quasiregular mappings]\label{def:analytic def for qr}
A branched covering $f\colon (M,g)\to (N,h)$ is said to be \textit{analytically $K$-quasiregular} if $f\in N^{1,Q}_{loc}(M,N)$ and 
\begin{align*}
g_f^Q(p)\leq KJ_f(p)
\end{align*}
for $\Vol_M$-a.e. $p\in M$.
\end{definition}

The \emph{volume Jacobian} above is given by
\begin{equation}\label{eq:analytic jacobian}
J_f(p)=\frac{d\big(f^*\Vol_N\big)(p)}{d\Vol_M(p)},
\end{equation}
where the \textit{pullback} $f^*\Vol_N$ is defined as
\begin{equation}\label{eq:definition of pull-back}
f^*\Vol_N(A)=\int_N N(q,f,A)d\Vol_N(q).
\end{equation}

We point out an useful observation by Williams~\cite{w12} that the Jacobian $J_f(p)$ from~\eqref{eq:analytic jacobian} can be alternately described by
\begin{equation}\label{eq:another description of Jacobian}
J_f(p)=\lim_{r\to 0}\frac{\Vol_N\big(f(B(p,r))\big)}{\Vol_M(B(p,r))}
\end{equation}
for a.e. $p\in M$; see also~\cite[Section 4.3]{g14} for a simple proof of this fact.

\begin{definition}[Geometrically quasiregular mappings]\label{def:geometric def for qr} 
A branched covering $f\colon (M,g)\to (N,h)$ is said to be \textit{geometrically $K$-quasiregular} if it satisfies the so-called \textit{$K_O$-inequality}: For each open set $\Omega\subset M$ and for each curve family $\Gamma$ in $\Omega\subset M$ and for any admissible function $\rho$ for $f(\Gamma)$, there holds that
\begin{align*}
\Modd_Q(\Gamma)\leq K\int_{N}N(q,f,\Omega)\rho^Q(q)d\Vol_N(q). 
\end{align*}
\end{definition}

\subsection{Some basic facts about quasiregular mappings in subRiemannian manifolds}\label{subsec:Some basic facts about quasiregular mappings in subRiemannian manifolds}

We proceed by collecting some results from the paper~\cite[Section 5]{gnw15} about quasiregular mappings between equiregular subRiemannian manifolds.

\begin{proposition}[Proposition 5.15,~\cite{gnw15}]\label{prop:Jacobian=det Df}
	Let $f\colon (M,g)\to (N,h)$ be a branched covering such that $f$ is $P$-differentiable at $p_0\in M$, then 
	\begin{align*}
	J_{Df(p_0)}(0)=J_f(p_0).
	\end{align*}
Here $Df(p_0)$ is the $P$-differential of $f$ at $p_0$.
\end{proposition}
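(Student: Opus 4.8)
The goal is to prove Proposition~\ref{prop:Jacobian=det Df}: if $f\colon(M,g)\to(N,h)$ is a branched covering that is $P$-differentiable at $p_0\in M$, then the volume Jacobian $J_f(p_0)$ equals the Jacobian $J_{Df(p_0)}(0)$ of the $P$-differential at the origin.

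The plan is to use the infinitesimal characterization of the volume Jacobian recorded in~\eqref{eq:another description of Jacobian}, namely that for a.e.\ $p\in M$,
\[
J_f(p)=\lim_{r\to 0}\frac{\Vol_N\big(f(B(p,r))\big)}{\Vol_M(B(p,r))},
\]
and to compare $f(B(p_0,r))$ with $Df(p_0)(B(p_0,r))$ via the $P$-differentiability estimate~\eqref{eq:P-diff in coordinate 2}. First I would pass to privileged coordinates centered at $p_0$ and at $\bar p_0=f(p_0)$, identifying neighborhoods with neighborhoods of the origin in the Carnot tangent groups $\G_{p_0}$ and $\G_{\bar p_0}$ equipped with their dilations $\delta^{p_0}_\epsilon$, $\delta^{\bar p_0}_\epsilon$. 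In these coordinates $A:=Df(p_0)$ is homogeneous with respect to the dilations (it is a homomorphism of graded Lie algebras), so $A(B(0,r))=\delta^{\bar p_0}_r\big(A(B(0,1))\big)$ up to the Ball-Box comparison of the metric balls with the coordinate boxes. The $P$-differentiability relation~\eqref{eq:P-diff in coordinate 2} says $d_h(\bar p_0,f(y))=d_h(\bar p_0,Ay)+o(d_g(0,y))$, which after rescaling by $\delta^{p_0}_{1/r}$ and $\delta^{\bar p_0}_{1/r}$ shows that the rescaled images $\delta^{\bar p_0}_{1/r}\circ f\circ\delta^{p_0}_r$ converge uniformly on compact sets to $A$ as $r\to 0$; equivalently $\tfrac1r f(B(p_0,r))\to A(B(0,1))$ in the Hausdorff sense with respect to the tangent metric $d^{\bar p_0}$.

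From this Hausdorff convergence I would deduce the convergence of the normalized volumes. Since the Popp measure $\Vol_N$, read in exponential coordinates on the Carnot group $\G_{\bar p_0}$, is (a constant multiple of) the Haar/Lebesgue measure and is homogeneous of degree $Q$ under $\delta^{\bar p_0}_\epsilon$, we get $\Vol_N(\delta^{\bar p_0}_r E)=r^Q\Vol_N(E)(1+o(1))$, where the $o(1)$ absorbs the discrepancy between the Popp measure of $N$ near $\bar p_0$ and the Haar measure of $\G_{\bar p_0}$ — this discrepancy tends to $1$ after rescaling because the rescaled vector fields $X_j^{\bar p_0,\epsilon}$ converge to $X_j^{\bar p_0,0}$, hence the rescaled Popp volume densities converge to the Carnot one. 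The same applies downstairs with $\Vol_M$. Combining: the numerator $\Vol_N(f(B(p_0,r)))=r^Q\big(\Vol_N^{\G}(A(B(0,1)))+o(1)\big)$ and the denominator $\Vol_M(B(p_0,r))=r^Q\big(\Vol_M^{\G}(B(0,1))+o(1)\big)$, so the quotient converges to $\Vol_N^{\G}(A(B(0,1)))/\Vol_M^{\G}(B(0,1))$, which is by definition $J_A(0)=J_{Df(p_0)}(0)$ (the Jacobian of the homogeneous map $A$ computed at the origin, which is constant).

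The main obstacle is the interchange of limits needed to turn Hausdorff convergence of sets into convergence of their measures: one needs that $\partial\big(A(B(0,1))\big)$ is $\Vol_N^{\G}$-null (so no mass escapes across the boundary in the limit), and that $f(B(p_0,r))$ does not develop thin spikes or wild oscillations near $\partial A(B(0,1))$ that survive rescaling. For a branched covering $f$ this is controlled by openness and discreteness together with the differentiability estimate, which gives two-sided containment $A(B(0,(1-\varepsilon_r)r))\subset f(B(p_0,r))\subset A(B(0,(1+\varepsilon_r)r))$ for the rescaled map (here $\varepsilon_r\to0$), reducing the claim to the outer regularity and dilation-homogeneity of $\Vol_N^{\G}$; the $o(1)$ control of the rescaled Popp densities near $p_0$ and $\bar p_0$ is the remaining technical point, and it follows from the smooth convergence $X_j^{\cdot,\epsilon}\to X_j^{\cdot,0}$ recalled in Section~\ref{subsec:tangent cone}. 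I would organize the write-up so that the two-sided sandwich is established first, then the measure asymptotics of dilated sets, and finally the identification of the ratio with $J_{Df(p_0)}(0)$.
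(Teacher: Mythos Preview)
The paper does not give its own proof of this proposition; it is quoted verbatim from~\cite{gnw15} (Proposition~5.15 there) and used as a black box, so there is no proof in the present paper to compare your argument against. That said, your approach---rescale by the privileged-coordinate dilations so that $\delta^{\bar p_0}_{1/r}\circ f\circ\delta^{p_0}_r\to A$ uniformly, and read off the limit of the volume ratio from the homogeneity of the Popp/Haar measure on the tangent Carnot group---is exactly the natural strategy and is essentially what underlies the result in~\cite{gnw15}.

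There is, however, a genuine gap in the form of your two-sided sandwich. The $P$-differentiability estimate $d_h(f(y),Ay)=o(d_g(0,y))$ gives that $f(B(p_0,r))$ and $A(B(p_0,r))$ are within Hausdorff distance $o(r)$ of each other; it does \emph{not} give the containment $f(B(p_0,r))\subset A(B(0,(1+\varepsilon_r)r))$ you write, since an $o(r)$-tube around $A(B(0,r))$ need not lie inside a dilate of $A(B(0,1))$ unless $A$ is already known to be an open map (equivalently, an isomorphism of the tangent Carnot groups). The correct sandwich is between $o(r)$-tubular neighborhoods of $A(B(0,r))$, and the lower inclusion then requires a genuine degree argument: from $\delta^{\bar p_0}_{1/r}\circ f\circ\delta^{p_0}_r\to A$ uniformly one gets, when $A$ is a homeomorphism, that the image contains $A(B(0,1-\varepsilon))$ for small $r$ by stability of topological degree. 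You should separate out the degenerate case $J_A(0)=0$ (i.e.\ $A$ not an isomorphism), where the upper tubular inclusion alone forces $\Vol_N(f(B(p_0,r)))=o(r^Q)$ and hence $J_f(p_0)=0$. Finally, note that the identity~\eqref{eq:another description of Jacobian} you invoke is only asserted for a.e.\ $p$; since $J_f$ is a Radon--Nikodym derivative this is unavoidable, so your conclusion is really an a.e.\ statement---which is also how the proposition is actually used later in the paper (e.g.\ in step~(B) of the proof of Theorem~\ref{thm:equivalence of quasiregular mappings}).
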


Let $p_0\in M$. If a mapping $f\colon (M,g)\to (N,h)$ is $P$-differentiable at $p_0$, then the $P$-differential $Df(p_0)$ maps the horizontal distribution $\mD_{p_0}$ on $M$ to the horizontal distribution $\mD_{f(p_0)}$ on $N$. This fact allows to define the maximal norm of the differential $Df(p_0)$ as 
\begin{align*}
\|Df(p_0)\|&=\max_v\Big\{|Df(p_0)v|_h:v\in \mathcal{D}_{p_0}\text{ and } |v|_g=1 \Big\}\numberthis\label{eq:def maximal norm}\\
&=\limsup_{r\to 0}\frac{L_f(p_0,r)}{r}
\end{align*}
Similarly, the minimal norm of the differential $Df(p_0)$ is defined as
\begin{align*}
\|Df(p_0)\|_s&=\min_{v}\Big\{|Df(p_0)v|_h:v\in \mathcal{D}_{p_0} \text{ and } |v|_g=1 \Big\}\numberthis\label{eq:def for minimal norm}\\
&=\liminf_{r\to 0}\frac{l_f(p_0,r)}{r}.
\end{align*}

\begin{lemma}[Lemma 5.18,~\cite{gnw15}]\label{lemma:simple}
	If $f\colon M\to N$ is weak metrically $H$-quasiregular, then 
	\begin{align}\label{eq:bound on norm in tangent cone}
	\frac{\|Df(p_0)\|}{\|Df(p_0)\|_s}\leq H
	\end{align} 
	for a.e. $p_0\in M$.
\end{lemma}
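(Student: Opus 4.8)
\textbf{Proof proposal for Lemma~\ref{lemma:simple}.}

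The plan is to bound the two norms $\|Df(p_0)\|$ and $\|Df(p_0)\|_s$ by the infinitesimal quantities $L_f(p_0,r)$ and $l_f(p_0,r)$ respectively, and then to exploit the definition of weak metric quasiregularity to compare $L_f$ and $l_f$ at a.e. point. The starting point is that, by Theorem~\ref{thm:Differentiability Stepanov}, a weak metrically $H$-quasiregular map is $P$-differentiable at a.e. point of the set where $\limsup_{y\to x} d_h(f(y),f(x))/d_g(y,x)<\infty$; since $h_f(p)<\infty$ everywhere and a branched covering is in particular continuous, one checks that this set has full measure, so $f$ is $P$-differentiable $\Vol_M$-a.e. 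Fix such a point $p_0$ where moreover $h_f(p_0)\leq H$ (still a.e.), and where $\|Df(p_0)\|_s>0$; the case $\|Df(p_0)\|_s=0$ would make the left-hand side of \eqref{eq:bound on norm in tangent cone} infinite, so one must argue this does not happen a.e., e.g. by noting that $J_f=J_{Df(p_0)}\geq 0$ together with the open/discrete structure, or simply by invoking that at a.e. point the $P$-differential is a graded isomorphism for a non-constant branched covering.

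The core step is the two-sided estimate already recorded in the excerpt, namely the identities
\[
\|Df(p_0)\| = \limsup_{r\to 0}\frac{L_f(p_0,r)}{r},
\qquad
\|Df(p_0)\|_s = \liminf_{r\to 0}\frac{l_f(p_0,r)}{r},
\]
from \eqref{eq:def maximal norm} and \eqref{eq:def for minimal norm}. These follow from the $P$-differentiability expansion \eqref{eq:P-diff in coordinate 2}: writing $d_h(f(p_0),f(y)) = d_h(f(p_0),Df(0)y) + o(d_g(0,y))$ and taking the supremum (resp. infimum) over $y$ with $d_g(p_0,y)=r$, one gets $L_f(p_0,r)=r\,\|Df(p_0)\| + o(r)$ and $l_f(p_0,r)=r\,\|Df(p_0)\|_s + o(r)$, where I use that the $P$-differential, read in privileged coordinates, is homogeneous under the dilations so its sup/inf over the unit ``sphere'' of $d_g$ scales linearly. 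Granting these, pick a sequence $r_j\to 0$ realizing the $\limsup$ in the first identity; along a further subsequence $l_f(p_0,r_j)/r_j \to \|Df(p_0)\|_s$ (since that is the $\liminf$, one may need instead to choose $r_j$ realizing $\|Df(p_0)\|$ and then pass to the liminf, which only helps the inequality). Then
\[
\frac{\|Df(p_0)\|}{\|Df(p_0)\|_s}
= \lim_j \frac{L_f(p_0,r_j)/r_j}{l_f(p_0,r_j)/r_j}
\leq \limsup_{j}\frac{L_f(p_0,r_j)}{l_f(p_0,r_j)}
= \limsup_j H_f(p_0,r_j)
\leq \limsup_{r\to 0} H_f(p_0,r).
\]
Now $\limsup_{r\to 0}H_f(p_0,r)$ is exactly $H_f(p_0)$, but we only know $h_f(p_0)=\liminf_{r\to 0}H_f(p_0,r)\leq H$; so the last step must instead be arranged so that the limiting ratio is compared to a $\liminf$. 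The clean way is: choose $r_j\to0$ so that $H_f(p_0,r_j)\to h_f(p_0)$, and along it extract convergence of $L_f(p_0,r_j)/r_j\to \ell\leq \|Df(p_0)\|$ is the wrong direction — so the correct route is to use that both $L_f/r$ and $l_f/r$ actually converge (not just along subsequences) to $\|Df(p_0)\|$ and $\|Df(p_0)\|_s$ when the $P$-differential is an isomorphism, because the error term $o(r)$ in \eqref{eq:P-diff in coordinate 2} is uniform over the unit sphere. In that case $H_f(p_0,r)\to \|Df(p_0)\|/\|Df(p_0)\|_s$, so $h_f(p_0)=\|Df(p_0)\|/\|Df(p_0)\|_s$ and condition 2) of Definition~\ref{def:weak metric qr} gives exactly \eqref{eq:bound on norm in tangent cone}.

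\emph{Main obstacle.} The delicate point is justifying that $L_f(p_0,r)/r$ and $l_f(p_0,r)/r$ genuinely \emph{converge} (so that $H_f(p_0,r)$ has a limit) rather than merely having matching $\limsup$/$\liminf$ as displayed above; this is where one needs the uniformity of the remainder in the $P$-differentiability expansion over $d_g$-spheres of radius $r$, which in turn rests on identifying the neighborhood with a neighborhood of the origin in the tangent Carnot group via privileged/exponential coordinates (Section~\ref{subsec:tangent cone}) and using the exact $\delta_r$-homogeneity of $Df(p_0)$ there. A secondary subtlety is the a.e. nondegeneracy $\|Df(p_0)\|_s>0$: one should dispatch it by remarking that a non-constant branched covering cannot have $J_f=0$ on a positive-measure set, together with Proposition~\ref{prop:Jacobian=det Df} which identifies $J_f(p_0)$ with the Jacobian of $Df(p_0)$, forcing $Df(p_0)$ to be a graded isomorphism and hence $\|Df(p_0)\|_s>0$, a.e.
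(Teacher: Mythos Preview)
The paper does not supply its own proof of this lemma; it is quoted from~\cite{gnw15}. What the paper does reveal elsewhere (in part~(F) of the proof of Theorem~\ref{thm:equivalence of quasiregular mappings}, which invokes~\cite[Proposition~5.22]{gnw15}) is that the mechanism is precisely the one you isolate: at a point $p_0$ where $f$ is $P$-differentiable with $J_f(p_0)>0$, the ratio $H_f(p_0,r)$ has an honest limit as $r\to 0$, equal to $\|Df(p_0)\|/\|Df(p_0)\|_s$, so that $h_f(p_0)=H_f(p_0)=\|Df(p_0)\|/\|Df(p_0)\|_s$ and the bound $h_f(p_0)\le H$ finishes the job. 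Your ``main obstacle'' paragraph names exactly this, and your explanation---uniformity of the $o(r)$ in~\eqref{eq:P-diff in coordinate 2} over $d_g$-spheres together with $\delta_r$-homogeneity of the graded homomorphism $Df(p_0)$ in privileged coordinates---is the correct reason. So the heart of your argument is right and in line with the intended proof.

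There is, however, a genuine gap in your opening step. You justify a.e.\ $P$-differentiability by invoking Theorem~\ref{thm:Differentiability Stepanov} together with ``$h_f(p)<\infty$ everywhere'', but membership in $L(f)$ requires the \emph{limsup} $\limsup_{y\to p} d_h(f(y),f(p))/d_g(y,p)<\infty$, whereas $h_f$ is a \emph{liminf} of $L_f/l_f$. Finiteness of $h_f$ does not by itself force $\Lip f<\infty$ a.e.; for an open, discrete map with $h_f$ essentially bounded this bridge is a nontrivial step (in~\cite{gnw15} it proceeds by first establishing $f\in N^{1,Q}_{loc}$ and then invoking Proposition~\ref{prop:on differentiability}, not via Stepanov directly). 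The same circularity lurks in your handling of the secondary subtlety: you appeal to $J_f>0$ a.e., but in the present paper that fact is recorded only as a consequence of analytic quasiregularity (Lemma~\ref{lemma:positiveness of Jacobian}), which in the logical flow of Theorem~\ref{thm:equivalence of quasiregular mappings} is deduced \emph{from} the lemma under discussion via step~(B). Your argument is therefore correct modulo importing the Sobolev-regularity/nondegeneracy input from~\cite{gnw15}; that input should be cited explicitly rather than absorbed into ``one checks''.
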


Let $f\colon M\to N$ be a continuous mapping. The pointwise Lipschitz constant of $f$ at $p_0\in M$ is defined as
\begin{align*}
\Lip f(p_0)=\limsup_{p\to p_0}\frac{d_h(f(p),f(p_0))}{d_g(p,p_0)}. 
\end{align*}\

\begin{lemma}[Lemma 5.19,~\cite{gnw15}]\label{lemma:Lip=norm of differential}
	Let $f\colon (M,g)\to (N,h)$ be a mapping $P$-differentiable at $p_0\in M$. Then 
	\begin{align}\label{Lip=norm of differential}
	\Lip f(p_0)=\|Df(p_0)\|.
	\end{align} 
\end{lemma}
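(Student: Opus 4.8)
The plan is to reduce the statement to a comparison between $f$ and its $P$-differential at $p_0$ via the asymptotic identity \eqref{eq:P-diff in coordinate 2}, and then to a purely Carnot-group computation. First I would fix systems of privileged coordinates centered at $p_0$ and at $\bar p:=f(p_0)$, chosen as in Section~\ref{subsec:tangent cone} (following~\cite{cr15}) so that they are exponential coordinates on the tangent Carnot groups $\G_{p_0}$ and $\G_{\bar p}$. In these coordinates $A:=Df(p_0)\colon\G_{p_0}\to\G_{\bar p}$ is a homomorphism of Carnot groups, linear on each graded layer; on the first layer $V_1=\mD_{p_0}$ its operator norm is, by \eqref{eq:def maximal norm}, exactly $\|Df(p_0)\|=\max\{|Av|_h:v\in V_1,\ |v|_g=1\}$. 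Identifying a neighbourhood of $p_0$ with a neighbourhood of $0\in\G_{p_0}$, equation \eqref{eq:P-diff in coordinate 2} reads $d_h(f(p_0),f(y))=d_h(\bar p,Ay)+o(d_g(0,y))$ as $y\to 0$. I would then record the two ingredients that drive the comparison: (a) the metric tangent cone construction of Section~\ref{subsec:tangent cone} gives $d_g(0,y)=d^{p_0}(0,y)(1+o(1))$ and $d_h(\bar p,z)=d^{\bar p}(0,z)(1+o(1))$ as $y,z\to 0$, where $d^{p_0},d^{\bar p}$ are the left-invariant Carnot distances; and (b) since a Carnot homomorphism carries horizontal curves to horizontal curves with $|A_* w|_h\le\|Df(p_0)\|\,|w|_g$ for $w\in V_1$, it does not inflate lengths by more than the factor $\|Df(p_0)\|$, so $d^{\bar p}(0,Ax)\le\|Df(p_0)\|\,d^{p_0}(0,x)$ for every $x$, with equality along one-parameter subgroups of the first layer.

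For the upper bound $\Lip f(p_0)\le\|Df(p_0)\|$ I would combine the displayed identity with (a) and (b): for $y\to 0$, $d_h(f(p_0),f(y))=d^{\bar p}(0,Ay)(1+o(1))+o(d_g(0,y))\le\|Df(p_0)\|\,d_g(0,y)(1+o(1))+o(d_g(0,y))$, and dividing by $d_g(0,y)$ and letting $y\to 0$ gives the claim. For the reverse inequality I would test the difference quotient along a maximizing direction: pick $v\in\mD_{p_0}$ with $|v|_g=1$ and $|Av|_h=\|Df(p_0)\|$, set $y_t:=\delta_t v=\exp(tv)$ and let $t\downarrow 0$. Since $A$ commutes with the dilations, $Ay_t=\delta_t(Av)$; using $d^{p_0}(0,\delta_t v)=t|v|_g=t$ (lower bound by projecting to the abelianisation $\G_{p_0}/[\G_{p_0},\G_{p_0}]=V_1$, upper bound by the curve $s\mapsto\exp(sv)$) and similarly $d^{\bar p}(0,\delta_t(Av))=t|Av|_h=t\|Df(p_0)\|$, I obtain $d_h(f(p_0),f(y_t))=t\|Df(p_0)\|(1+o(1))+o(t)$ while $d_g(0,y_t)=t(1+o(1))$, so the quotient tends to $\|Df(p_0)\|$ and hence $\Lip f(p_0)\ge\|Df(p_0)\|$. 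Combining the two bounds finishes the proof.

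The hard part will be ingredient (a): the first-order coincidence of the genuine subRiemannian distance with the Carnot tangent-cone distance, and the bookkeeping needed to see that its $o(1)$-error is compatible with the $o(d_g(0,y))$ produced by $P$-differentiability (this is harmless because, by the Ball--Box theorem, $d_g(0,y)\asymp d^{p_0}(0,y)\asymp\|y\|$ for a homogeneous norm). This is exactly the content of the metric tangent cone construction recalled in Section~\ref{subsec:tangent cone} (Mitchell/Bella\"\i che/Gromov), which has to be invoked both near $p_0$ and, since $Ay\to 0$ as $y\to 0$, near $\bar p$. Everything else is elementary Carnot-group geometry.
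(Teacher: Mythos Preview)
Your argument is correct and is the natural one. Note, however, that the paper does not actually supply its own proof of this lemma: it is quoted verbatim from~\cite{gnw15} (Lemma~5.19 there), so there is no in-paper proof to compare against directly. The closest the paper comes is the Riemannian analogue, Lemma~\ref{lemma:weak upper gradient coincide with super norm} in the Appendix, whose one-line proof via Lemma~\ref{lemma:differentiability} is exactly the specialisation of your argument when the tangent cone is the tangent space itself: in that case your ingredients (a) and (b) become trivial (normal coordinates already realise the tangent cone, and the ``Carnot homomorphism'' is just a linear map between Euclidean spaces), and your two-sided estimate collapses to the single displayed chain of equalities there.

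What you add in the genuine subRiemannian case---the uniform comparison $d_g\sim d^{p_0}$, $d_h\sim d^{\bar p}$ from Section~\ref{subsec:tangent cone}, the Lipschitz bound for a graded homomorphism via its first-layer operator norm, and the identification $d^{p_0}(0,\delta_t v)=t|v|_g$ for $v\in V_1$---is exactly the right replacement for the Euclidean facts used in the Appendix, and your bookkeeping of the $o(1)$ terms is sound (the Ball--Box comparability you invoke is what makes the error from \eqref{eq:P-diff in coordinate 2} compatible with the tangent-cone error). One cosmetic point: writing $y_t:=\delta_t v=\exp(tv)$ conflates the group dilation/exponential in the tangent Carnot group with a point in the privileged chart on $M$; this is harmless under your standing assumption that the privileged coordinates are exponential coordinates on $\G_{p_0}$, but it would be cleaner to say simply ``let $y_t$ be the point with privileged coordinates $tv$'' and drop the $\exp$ notation.
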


\begin{proposition}\label{prop:minimal upper gradient}
	Let $f\colon (M,g)\to (N,h)$ be an $N^{1,Q}_{loc}(M,N)$-mapping that is $P$-differentiable a.e. in $M$. Then $\Lip f$ is the minimal $Q$-weak upper gradient of $f$, \ie
	\begin{align}\label{eq:minimal upper gradient}
	g_f=\Lip f.
	\end{align}
\end{proposition}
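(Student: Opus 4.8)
The plan is to prove the two pointwise inequalities $g_f\le\Lip f$ and $g_f\ge\Lip f$ separately, each $\Vol_M$-almost everywhere, and then conclude; by Lemma~\ref{lemma:Lip=norm of differential} I may pass freely between $\Lip f$ and $\|Df\|$. Throughout I work with a pointwise‑defined (continuous) representative of $f$, so that $\Lip f$ and $H_f$ make sense, and I fix once and for all the full‑measure set $S\subset M$ of points that are simultaneously points of $P$-differentiability of $f$ and Lebesgue points of $g_f^Q$ with respect to the (doubling) Popp measure $\Vol_M$; such points exist $\Vol_M$-a.e.\ by hypothesis together with the Lebesgue differentiation theorem.

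\textbf{The inequality $g_f\le\Lip f$.} Since $f$ is $P$-differentiable $\Vol_M$-a.e., $\Lip f=\|Df\|<\infty$ there by Lemma~\ref{lemma:Lip=norm of differential}, so $E:=\{\Lip f=+\infty\}$ is $\Vol_M$-null; consequently the family of curves spending positive length in $E$ has $Q$-modulus zero (use $\rho=\infty\cdot\chi_E$, with the convention $\infty\cdot0=0$). Because $f\in N^{1,Q}_{\loc}(M,N)$, for $Q$-a.e.\ rectifiable curve $\gamma$, parametrized by arclength, the composition $s\mapsto f(\gamma(s))$ is absolutely continuous into $N$, a standard property of Newtonian maps, cf.~\cite{hkst12}; hence $\varphi(s):=d_h(f(\gamma(s)),f(\gamma(0)))$ is absolutely continuous, and a triangle–inequality estimate (together with the fact that $\gamma$ is $1$-Lipschitz in arclength parametrization) shows its upper right derivative at $s$ is at most $\Lip f(\gamma(s))$ whenever $\gamma(s)\notin E$. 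Integrating, $d_h(f(\gamma(1)),f(\gamma(0)))=\varphi(1)\le\int_0^1\Lip f(\gamma(s))\,ds$, so $\Lip f$ is a $Q$-weak upper gradient of $f$, and minimality of $g_f$ yields $g_f\le\Lip f$ a.e.

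\textbf{The inequality $g_f\ge\Lip f$ (the main point).} Fix $p_0\in S$; it suffices to show $g_f(p_0)\ge\|Df(p_0)\|$. Work in privileged coordinates centred at $p_0$ and at $\bar p_0=f(p_0)$ as in Section~\ref{sec:Geometry of equiregular subRiemannian manifolds}, let $\G_{p_0}=\g_{p_0}$ be the tangent Carnot group with dilations $\delta_r^{p_0}$, and set $f_r:=\delta_{1/r}^{\bar p_0}\circ f\circ\delta_r^{p_0}$. By~\eqref{eq:P-diff in coordinate 2} the maps $f_r$ converge locally uniformly as $r\to0$ to the graded homomorphism $L:=Df(p_0)\colon\G_{p_0}\to\G_{\bar p_0}$, linear in these coordinates, while the rescaled metrics $r^{-1}d_g(\delta_r^{p_0}\cdot,\delta_r^{p_0}\cdot)$ and correspondingly rescaled Popp measures converge to those of the tangent cones. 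A direct change of variables shows that $g_f\circ\delta_r^{p_0}$ is a $Q$-weak upper gradient of $f_r$ for the rescaled structures, and, since $p_0$ is a Lebesgue point of $g_f^Q$ and the Popp measure is asymptotically homogeneous of degree $Q$ under the dilations, $\int_{B_1}(g_f\circ\delta_r^{p_0})^Q\to g_f^Q(p_0)\,\Vol_{\G_{p_0}}(B_1)$ along $r\to0$, where $B_1$ is the unit ball of $\G_{p_0}$. Extracting a weak $L^Q(B_1)$-limit $G$ of a subsequence of the $g_f\circ\delta_r^{p_0}$, stability of upper gradients under locally uniform convergence (Mazur's lemma together with Fuglede's lemma along curves) shows $G$ is an upper gradient of $L$, so $g_L\le G$ a.e.\ on $B_1$. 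Since $L$ is a graded homomorphism it is $\|L\|$-Lipschitz with $\Lip L\equiv\|L\|$, and a short Fubini/Hölder argument applied to the family of horizontal segments in a direction realizing $\|L\|$ identifies the minimal $Q$-weak upper gradient of $L$ as the constant $\|L\|$; hence $G\ge\|L\|$ a.e.\ on $B_1$. Weak lower semicontinuity of the $L^Q$-norm and the Lebesgue‑point identity then give $\|L\|^Q\Vol_{\G_{p_0}}(B_1)\le\|G\|_{L^Q(B_1)}^Q\le g_f^Q(p_0)\,\Vol_{\G_{p_0}}(B_1)$, i.e.\ $g_f(p_0)\ge\|L\|=\|Df(p_0)\|$. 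With the previous step this proves $g_f=\Lip f$ a.e.

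\textbf{Main obstacle.} The entire difficulty is the blow-up argument for the second inequality: one must (i) make rigorous the locally uniform convergence $f_r\to Df(p_0)$ jointly with the convergence of the rescaled subRiemannian structures and their Popp measures to the tangent cones, carefully tracking the degree‑$Q$ normalizations; (ii) establish the stability/lower semicontinuity of Newtonian $Q$-weak upper gradients under this convergence; and (iii) compute $g_{Df(p_0)}=\|Df(p_0)\|$ for the linear limit. A pervasive subtlety is that $\|Df\|$ is in general not continuous, which forces one to argue only on the full‑measure set $S$ of joint $P$-differentiability and Lebesgue points rather than locally uniformly; a minor point is fixing a genuinely pointwise (continuous) representative of $f$ so that $\Lip f$ is defined. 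When $M$ is compact, an alternative to (i)–(ii) is to combine the first inequality with Williams' characterization, Theorem~\ref{thm:characterization of Sobolev spaces}, reducing the reverse inequality to the modulus lower bound $\varepsilon^Q\Modd_Q(f^{-1}(\mathcal{C}_\varepsilon(N)))\ge(1-o(1))\int_M\|Df\|^Q\,d\Vol_M$, which is itself proved by a curve‑family argument of comparable flavour.
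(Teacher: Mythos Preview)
Your two-inequality split is exactly the structure of the paper's proof, which is itself only a few lines: for $g_f\le\Lip f$ it invokes \cite[Lemma~5.1]{w12proc} (that $\Lip f$ is a $Q$-weak upper gradient for $N^{1,Q}_{\loc}$-maps), and for $g_f\ge\Lip f$ it defers entirely to the proof of \cite[Proposition~5.20]{gnw15}. Your first inequality is essentially a reproof of the cited Williams lemma and is fine as written.

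For the reverse inequality you take a genuinely more ambitious route than the paper, which simply cites. Your blow-up scheme (rescale by privileged dilations, pass to the tangent Carnot group, use stability of upper gradients under weak $L^Q$ convergence plus Mazur/Fuglede, and identify $g_{Df(p_0)}\equiv\|Df(p_0)\|$) is a sound strategy and is in the spirit of how such statements are proved in metric differentiability theory. However, the obstacles you flag are real and not merely cosmetic: step~(ii), the stability of the upper-gradient inequality under the \emph{joint} convergence of maps and of the underlying subRiemannian metrics/measures to the tangent cone, is not a black-box result in this generality and has to be set up carefully (the modulus, the curve families, and the length/measure normalizations are all moving simultaneously). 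As written, your argument is a correct outline rather than a proof.

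A lighter alternative---closer in spirit to what \cite{gnw15} actually does and avoiding the variable-geometry limit---is to argue curvewise on a \emph{fixed} structure: for $Q$-a.e.\ arclength-parametrized horizontal curve $\gamma$ one has $|(f\circ\gamma)'|(t)\le g_f(\gamma(t))$ for a.e.\ $t$; at points of $P$-differentiability with $\gamma'(t)=v\in\mD_{\gamma(t)}$, $|v|=1$, the left side equals $|Df(\gamma(t))v|_h$. One then runs a Fubini argument over a foliation by integral curves of a horizontal unit field realizing (up to $\varepsilon$) the maximum of $|Df(\cdot)v|_h$ to obtain $g_f\ge\|Df\|-\varepsilon$ on a set of full measure, and concludes by a countable-density argument in the unit sphere of $\mD$. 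This avoids the convergence-of-structures issue entirely and is what the cited reference encapsulates.
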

\begin{proof}
	The result is essentially proved in~\cite[Proposition 5.20]{gnw15}. Indeed, the assumption that $f$ belongs to $N^{1,Q}_{loc}(M,N)$ implies that $\Lip f$ is an $Q$-weak upper gradient of $f$ (cf.~\cite[Lemma 5.1]{w12proc}). On the other hand, the assumption that $f$ is $P$-differentiable a.e. implies, by the proof of Proposition 5.20 in~\cite{gnw15}, that $g_f\geq \Lip f$.
\end{proof}

\begin{lemma}[Proposition 4.18,~\cite{g14}]\label{lemma:positiveness of Jacobian}
	If $f\colon (M,g)\to (N,h)$ is analytically quasiregular, then $J_f(p)>0$ for a.e. $p\in M$.
\end{lemma}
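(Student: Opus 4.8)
The plan is to exploit the analytic inequality $g_f^Q \leq K J_f$ together with the fact that an analytically quasiregular map is, by definition, a non-constant branched covering (in particular open and discrete), so that it cannot be ``infinitesimally constant'' on a set of positive measure. First I would argue that $J_f \geq 0$ a.e.: this is immediate from the alternative description \eqref{eq:another description of Jacobian}, $J_f(p) = \lim_{r\to 0}\Vol_N(f(B(p,r)))/\Vol_M(B(p,r))$, which is a limit of nonnegative quantities wherever it exists, hence nonnegative a.e. So the content of the lemma is that the set $Z := \{p\in M : J_f(p) = 0\}$ is $\Vol_M$-null.

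The key step is to show $\Vol_N(f(Z)) = 0$ and then transfer this back to $M$ using openness/discreteness. For the first part I would use the pullback identity $f^*\Vol_N(A) = \int_N N(q,f,A)\,d\Vol_N(q)$ from \eqref{eq:definition of pull-back}, combined with the differentiation formula \eqref{eq:analytic jacobian}: for any Borel set $A$, $\int_A J_f \, d\Vol_M = f^*\Vol_N(A) = \int_N N(q,f,A)\,d\Vol_N(q)$. Taking $A = Z$ gives $\int_N N(q,f,Z)\,d\Vol_N(q) = \int_Z J_f\,d\Vol_M = 0$, so $N(q,f,Z) = 0$ for $\Vol_N$-a.e.\ $q$, i.e.\ $\Vol_N(f(Z)) = 0$. (Here one needs the change-of-variables/pullback formula to be valid for the Sobolev branched covering $f\in N^{1,Q}_{\loc}$; this is exactly the setting of~\cite{w12,gnw15,g14}, so I would cite it rather than reprove it.) At this point the standard ``condition (N${}^{-1}$)'' type argument enters: since $f$ is a branched covering, the branch set has empty interior and $f$ restricted to its complement is a local homeomorphism, so $f^{-1}(f(Z))$ has measure zero whenever $f(Z)$ does — this uses that a branched covering between manifolds satisfies Lusin's condition (N${}^{-1}$), which for Sobolev quasiregular maps follows from the results collected in~\cite{gnw15}. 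Since $Z \subset f^{-1}(f(Z))$, we conclude $\Vol_M(Z) = 0$.

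An alternative, possibly cleaner route that avoids condition (N${}^{-1}$) altogether: on the set $Z\cap L(f)$, where $L(f)$ is the differentiability set of Theorem~\ref{thm:Differentiability Stepanov}, Proposition~\ref{prop:Jacobian=det Df} gives $J_{Df(p)}(0) = J_f(p) = 0$, so the $P$-differential $Df(p)$ is a graded Lie algebra homomorphism with vanishing Jacobian determinant, forcing $Df(p)$ to be non-surjective, hence $\|Df(p)\|_s = 0$ where the minimal norm is as in \eqref{eq:def for minimal norm}. Then the analytic inequality forces $g_f(p)^Q \leq K J_f(p) = 0$, so $g_f(p) = 0 = \Lip f(p)$ (using Proposition~\ref{prop:minimal upper gradient} and Lemma~\ref{lemma:Lip=norm of differential}, so $\|Df(p)\| = 0$ too). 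Thus on $Z\cap L(f)$ the full differential $Df$ vanishes a.e.; by the differentiability formula \eqref{eq:P-diff in coordinate 2}, $f$ is then infinitesimally constant at a.e.\ point of $Z$, and one runs the ACL/Sobolev argument: a map in $N^{1,Q}_{\loc}$ with $g_f = 0$ a.e.\ on a set is locally constant there, contradicting discreteness of $f$ unless that set is null. I expect the main obstacle to be the measure-theoretic bookkeeping around Lusin's condition (N${}^{-1}$) — namely, making precise that $\Vol_N(f(Z)) = 0$ pulls back to $\Vol_M(Z) = 0$ for a branched covering — so in the writeup I would lean on the second route, where the contradiction comes directly from $g_f = 0$ on $Z$ and discreteness, and cite~\cite{gnw15,g14} for the underlying Sobolev-theoretic facts rather than reproving them.
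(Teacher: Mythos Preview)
The paper does not prove this lemma; it is quoted directly from \cite[Proposition~4.18]{g14}. Your first route --- the area formula gives $\Vol_N(f(Z))=0$, then condition~(N$^{-1}$) gives $\Vol_M(Z)=0$ --- is the standard argument and is correct. Your worry about circularity is misplaced: condition~(N$^{-1}$) for an analytically quasiregular $f$ is obtained \emph{without} assuming $J_f>0$ a.e., by localizing away from the branch set $\mathcal{B}_f$ (which has Popp measure zero by \cite{gnw15,gw15}). Off $\mathcal{B}_f$ the map $f$ is a local quasiconformal homeomorphism; the inverse of a quasiconformal homeomorphism is again quasiconformal, hence (via the geometric definition and higher integrability, and this is where $Q\ge 2$ enters) lies in $N^{1,p}_{\loc}$ for some $p>Q$ and therefore satisfies condition~(N) --- which is exactly (N$^{-1}$) for $f$ on that piece. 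None of these steps invokes positivity of $J_f$.

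Your second, ``cleaner'' route has a genuine gap. From $g_f^Q\le K J_f$ you correctly obtain $g_f=0$ a.e.\ on $Z$, but the assertion that a map in $N^{1,Q}_{\loc}$ with $g_f=0$ a.e.\ on a set must be ``locally constant there, contradicting discreteness'' is false as stated. The set $Z$ may be nowhere dense (a fat-Cantor-type set), and no ACL or Sobolev argument produces local constancy on such a set, nor any direct conflict with openness or discreteness: already in one dimension, $f(x)=\int_0^x \rho$ with $\rho\ge 0$ vanishing exactly on a fat Cantor set is a strictly increasing homeomorphism with $g_f=0$ on a set of positive measure. To derive a contradiction from $g_f=0$ on $Z$ you are forced back through $\Vol_N(f(Z))=0$ and then condition~(N$^{-1}$), i.e.\ back to the first route. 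Keep that one and drop the second.
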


The following estimate on the volume Jacobian is well-known in different setting; see for instance~\cite{w12proc,w14,w12}. 
\begin{lemma}\label{lemma:bounds on Jacobian}
	Let $f\colon (M,g)\to (N,h)$ be a branched covering that is $P$-differentiable a.e. in $M$. Then for each $p\in M$, there exist a radius $r_p>0$ and a constant $C$, depending only on $p$ and $f(p)$, such that for $\Vol_M$-a.e. $p_0\in B(p,r_p)$
	\begin{align*}
	C^{-1}\|Df(p_0)\|_s^Q\leq J_f(p_0)\leq C\|Df(p_0)\|^Q.
	\end{align*}
\end{lemma}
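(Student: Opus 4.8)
The plan is to reduce everything to the corresponding statement about the $P$-differential $Df(p_0)$, which is a graded Lie algebra homomorphism between the tangent cones $\g_{p_0}$ and $\g_{f(p_0)}$, and then to exploit local compactness of the relevant family of tangent cones. First I would fix $p\in M$, set $\bar p=f(p)$, and use the construction recalled in Section~\ref{subsec:tangent cone}: choose privileged coordinates at each point of a small neighborhood $U_p$ of $p$ and a small neighborhood of $\bar p$, so that for $p_0\in U_p$ (after shrinking) the tangent cone $(\g_{p_0},d^{p_0})$ is a Carnot group depending continuously on $p_0$. By Proposition~\ref{prop:Jacobian=det Df}, $J_f(p_0)=J_{Df(p_0)}(0)$ wherever $f$ is $P$-differentiable, so it suffices to bound $J_{Df(p_0)}(0)$ from above by $C\|Df(p_0)\|^Q$ and from below by $C^{-1}\|Df(p_0)\|_s^Q$, with $C$ independent of $p_0\in B(p,r_p)$.

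The heart of the matter is then a statement purely about linear maps between Carnot groups: if $A\colon \G\to\G'$ is a graded Lie algebra homomorphism between two Carnot groups of homogeneous dimension $Q$ and rank $k$, and $\|A\|$, $\|A\|_s$ denote the maximal/minimal norms of $A$ restricted to the first layer (as in~\eqref{eq:def maximal norm} and~\eqref{eq:def for minimal norm}), then
\[
c(\G,\G')^{-1}\|A\|_s^Q\le J_A(0)\le c(\G,\G')\,\|A\|^Q,
\]
where $J_A(0)$ is the Jacobian of $A$ with respect to the Popp volumes, and $c(\G,\G')$ depends continuously on the structure constants of $\G$ and $\G'$. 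The upper bound is seen by observing that $A$ maps the unit ball of $d^{p_0}$ (comparable, by the Ball--Box theorem, to a box with side lengths $1,1,\dots$ in weighted coordinates) into a box of $d^{f(p_0)}$-side lengths controlled by $\|A\|$ in each weight, so the image has Popp volume $\lesssim \|A\|^{w_1+\dots+w_n}=\|A\|^Q$; the lower bound is the same computation with $\|A\|_s$ replacing $\|A\|$, using that $A$ is graded so its behaviour on higher layers is governed by brackets of first-layer vectors. To get a constant $C$ uniform in $p_0\in B(p,r_p)$, I would use that the tangent cones $\G_{p_0}=\g_{p_0}$ vary continuously with $p_0$ in a neighborhood of $p$ (their structure constants are continuous functions of $p_0$), so $c(\G_{p_0},\G_{f(p_0)})$ stays within a factor of $2$, say, of $c(\G_p,\G_{\bar p})$ on a sufficiently small ball $B(p,r_p)$; this furnishes the desired $C=C(p,f(p))$.

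The main obstacle I anticipate is controlling the Popp volume of the image box in the higher-weight directions uniformly: a graded homomorphism $A$ is determined by its restriction $A_1$ to $V_1$, but its action on $V_i$ is a polynomial (of Lie-bracket type) in the structure constants and in $A_1$, so the passage from ``$\|A_1\|\le\|A\|$'' to ``$\|A|_{V_i}\|\lesssim \|A\|^i$'' requires a bound on the bracket polynomials that is uniform as $p_0$ ranges over $B(p,r_p)$ — this is exactly where the continuous dependence of the tangent cones on the base point, together with the compactness of $\overline{B(p,r_p)}$, is needed. Once this uniformity is in place, combining the linear-algebra estimate with Proposition~\ref{prop:Jacobian=det Df} gives the claim at every $p_0\in B(p,r_p)$ where $f$ is $P$-differentiable, hence for $\Vol_M$-a.e.\ $p_0\in B(p,r_p)$ by the hypothesis that $f$ is $P$-differentiable a.e.; the asserted local form of the conclusion then follows by covering $M$ with such balls.
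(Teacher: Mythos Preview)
Your proposal is correct but takes a genuinely different route from the paper. The paper argues via local Ahlfors $Q$-regularity: it chooses $r_p$ and $r_{f(p)}$ so that $(B(p,r_p),d_g,\Vol_M)$ and $(B(f(p),r_{f(p)}),d_h,\Vol_N)$ are Ahlfors $Q$-regular with a common constant $C$, then uses the metric description $J_f(p_0)=\lim_{r\to 0}\Vol_N(f(B(p_0,r)))/\Vol_M(B(p_0,r))$ together with the containment $f(B(p_0,r))\subset B\big(f(p_0),r(\Lip f(p_0)+\varepsilon)\big)$ (and the reverse containment from openness for the lower bound) to read off the two-sided estimate directly. Your approach instead passes through Proposition~\ref{prop:Jacobian=det Df} to reduce to a linear-algebra statement about graded homomorphisms of the tangent Carnot groups, and gets uniformity in $p_0$ from continuous dependence of the structure constants on the base point. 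Both work; the paper's argument is shorter and purely metric--measure-theoretic (so it transfers verbatim to general locally Ahlfors regular spaces), while yours makes the algebraic mechanism---that $A|_{V_i}$ is a degree-$i$ bracket polynomial in $A|_{V_1}$---explicit, which is closer in spirit to the Popp-extension eigenvalue analysis of Section~\ref{sec:The Popp extension of a horizontal metric}. One small point worth tightening: for the lower bound your ``same computation with $\|A\|_s$'' is cleanest if phrased via the inverse $A^{-1}$ (available since $\|A\|_s>0$ forces $A|_{V_1}$, and hence the graded $A$, to be bijective), rather than by trying to bound directly the minimal stretch of $A$ on higher layers, where bracket cancellations make a naive estimate awkward.
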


\begin{proof}
The claim is a consequence of the fact that equiregular manifolds of homogeneous dimension $Q$ are locally Ahlfors $Q$-regular. Indeed, for each point $p$, we may find $r_p>0$ and $r_{f(p)}>0$ such that the balls $B(p,r_p)$ and $B(f(p), r_{f(p)})$ are relatively compact and both the metric measure spaces $\big(B(p,r_p),d_g,\Vol_M\big)$ and $\big(B(f(p),r_{f(p)}),d_h,\Vol_N\big)$ are Ahlfors $Q$-regular with constant $C$, i.e., 
\begin{align*}
	C^{-1}r^Q\leq \Vol_M(B(p_0,r))\leq Cr^Q
\end{align*}	
for each $p_0\in B(p,r_p)$ with $B(p_0,r)\subset B(p,r_p)$ and 
\begin{align*}
C^{-1}r^Q\leq \Vol_N\big(B(f(p_0),r)\big)\leq Cr^Q
\end{align*}
for each $B(f(p_0),r)\subset B(f(p),r_{f(p)})$.

Fix such a point $p_0$ with the additional property that~\eqref{eq:another description of Jacobian} holds and fix an $\varepsilon>0$. Then it follows that there exists $r_0>0$ such that for each $r\in (0,r_0)$
	\begin{align*}
	f\big(B(p_0,r)\big)\subset B\big(f(p_0),r(\Lip f(p_0)+\varepsilon)\big).
	\end{align*}
	Thus
	\begin{align}\label{eq:bj 1}
	\frac{\Vol_N\big(f(B(p_0,r))\big)}{\Vol_M\big(B(p_0,r)\big)}\leq \frac{\Vol_N\big(B\big(f(p_0),r\,(\Lip f(p_0)+\varepsilon)\big)\big)}{\Vol_M\big(B(p_0,r)\big)}.
	\end{align}
When $r$ is sufficiently small, we have
$$\Vol_N\big(B\big(f(p_0),r\,(\Lip f(p_0)+\varepsilon)\big)\big)\leq Cr^Q(\Lip f(p_0)+\varepsilon)^Q$$\
and
$$\Vol_M\big(B(p_0,r)\big)\geq C^{-1}r^Q.$$
Consequently, we conclude that
\begin{align*}
	J_f(p_0)&=\lim_{r\to 0}\frac{\Vol_N\big(f(B(p_0,r))\big)}{\Vol_M\big(B(p_0,r)\big)}\\
	&\leq \lim_{r\to 0}\frac{Cr^Q(\Lip f(p_0)+\varepsilon)^Q}{C^{-1}r^Q}\leq C^2(\Lip f(p_0)+\varepsilon)^Q.
\end{align*}
The right-hand side inequality of the claim follows immediately by sending $\varepsilon\to 0$. The proof of the left-hand side is similar and we omit it here.
\end{proof}

\begin{remark}\label{rmk:section 4}

i) We are not aware if there is a simple argument that shows the constant $C$ appearing in Lemma~\ref{lemma:bounds on Jacobian} can be taken to be $1$. This improvement would be useful when one tries to show the equivalences of 1), 2), 4) and 5) in Corollary~\ref{coro:equivalence of 1-quasiregular mappings} directly, since the inequalities become equalities for 1-quasiregular mappings. On the other hand, these estimates are not sharp for general $K$-quasiregular mappings. To obtain sharper relations of the estimates in Lemma~\ref{lemma:bounds on Jacobian}, we will use the local Popp extensions introduced in the next section.

ii) Note that if $f\in N^{1,Q}_{loc}(M,N)$ is a branched covering with $J_f>0$ a.e. in $M$, then it follows from Proposition~\ref{prop:on differentiability} and Proposition~\ref{prop:Jacobian=det Df} that $f$ is $P$-differentiable at a.e. $p\in M$ and the $P$-differential $Df(p):\mathbb{G}_p\to \mathbb{G}_{f(p)}$ is a Carnot group isomorphism. In particular, $f$ is a contact mapping and the pushforward $f_*:\mD_M\to\mD_N$ is injective a.e. in $M$. Recall also that (cf.~Section~\ref{subsec:Sobolev spaces}) the standard chain rule holds for our mapping $f$. We will use these facts in Section~\ref{subsec:compare two popp extensions}.

\end{remark}

\section{The Popp extension of a horizontal metric}\label{sec:The Popp extension of a horizontal metric}

\subsection{The construction of a Popp extension}
In this section we construct local extensions for a horizontal metric on an equiregular subRiemannian manifold. An extension of a horizontal metric is a local Riemannian metric on the manifold. Any such extension will be called a \textit{Popp extension} of the horizontal metric or a \textit{local Popp extension} if we wish to emphasize the localness of the construction. A Popp extension is constructed from the Lie bracket structure of a subRiemannian manifold together with its horizontal metric. 

The construction depends on an adapted frame, and in general, the construction cannot be applied consistently globally on a general subRiemannian manifold~\footnote{We thank Prof.~Enrico Le Donne for pointing out this to us.}. We will work out the reason for this in some detail.

Even though Popp extensions themselves are not globally well-defined, we can still use them to define distortion between two horizontal metrics invariantly and globally on an equiregular subRiemannian manifold. The distortion is encoded in a matrix valued function on the subRiemannian manifold that is constructed from two Popp extensions, and which has a well defined eigenvalue equation. Estimating the  corresponding eigenvalues turns out to be the key ingredient in our main proof of the equivalences of various definitions of quasiregular mappings between subRiemannian manifolds. 

We begin by constructing a local Popp extension for a given horizontal metric and by finding a general formula for the Popp extension. Given a point on the manifold, a Popp extension is a local inner product for tangent vectors of the manifold near the point. Thus we can consider it as a local Riemannian metric. 

A Popp extension is the inner product constructed alongside with the construction of the Popp measure. The construction is well known, but we find explicit formulas for the extension are hard to come by in the literature. We will closely follow the extremely clear presentation of Barilari and Rizzi~\cite{br13} on this matter.

Our construction of the Popp extension yields equivalent formulas to those derived in~\cite{br13}. The main difference is that we construct the Popp extension with respect to an adapted frame $X=(X_1,\ldots,X_n)$ \textit{without the assumption} that the horizontal part of the frame $(X_1,\ldots,X_k)$  is orthonormal. (Here $n=\dim M$ and $k$ is the rank of the distribution $\mD$.) The reason for lifting the assumption of orthonormality of $(X_1,\ldots,X_k)$ is that the distortion function, which we will define later, depends on two different horizontal metrics. Of course, in this case the frame $(X_1,\ldots,X_k)$ for $\mD$ is in general non-orthonormal with respect to at least one of the horizontal metrics.

We now recall a few definitions. As introduced in Section~\ref{sec:Geometry of equiregular subRiemannian manifolds}, a subRiemannian manifold admits a nilpotentization, which is the graded vector space
$$
gr(\mD)=\mD\oplus\mD^2/\mD\oplus\cdots\oplus\mD^m/\mD^{m-1}.
$$
The whole tangent bundle of the subRiemannian manifold can then locally be identified with this grading by choosing an adapted frame $X=(X_1,X_2,\ldots,X_n)$ on an open set of $TM$. This identification is sometimes stated as there being a non-canonical isomorphism between $T_pM$ and $gr_p(\mD)$, $p\in M$ (see e.g.~\cite{br13}). We can rephrase this statement by saying that choosing local adapted frames to cover the manifold does not (in general) yield a global isomorphism between $TM$ and $gr(\mD)$. 

If $X$ is an adapted frame, then the equivalence class of $(X_{k_{s-1}+1},X_{k_{s-1}+2},\ldots,X_{k_{s}})$ is a basis for $\mD^s/\mD^{s-1}$, $s=1,\ldots,m$. We use the notation $\mD=\mD^1$ (also $k=k_1$) and set $\mD^{0}=0$ here and in what follows. We call $\mD^s$ the $s^{\tiny{th}}$ layer.

We use the notion of \emph{adapted structure constants} introduced in~\cite{br13}. Let us take  $X=(X_1,X_2,\ldots,X_n)$ to be an adapted basis. Then the $s^{\text{\tiny th}}$ adapted structure constants are defined by the formula
$$
[X_{i_1},[X_{i_2},\ldots,[X_{i_{s-1}},X_{i_s}]]]=b^\beta_{i_1i_2\cdots i_s}X_\beta \mbox{ mod } \mD^{s-1},
$$
where $1\leq i_1,\ldots,i_s\leq k$ and Einstein summation over $\beta=k_{s-1}+1,\ldots, k_s$ is implied. If $(\omega^1,\omega^2,\ldots,\omega^n)$ is a dual basis to $X$, then the above is equivalent to
$$
b^\a_{i_1i_2\cdots i_s}=\omega^\a\left([X_{i_1},[X_{i_2},\ldots,[X_{i_{s-1}},X_{i_s}]]]\right), \quad \a\in\{k_{s-1}+1,\cdots,k_{s}\}.
$$
This is well defined by the elementary fact that $\omega^\a$, $\a=k_{s-1}+1,\ldots, k_s$, annihilates $\mD^{s-1}$ and all the lower layers. 

\begin{proposition}(Local Popp extension)
	Let $(M,g)$ be an equiregular subRiemannian manifold of step $m$. Let $p\in M$ and let $(X_1,X_2,\ldots,X_n)$ be an adapted frame on an open neighborhood $U$ of $p$. Then in this adapted frame the Popp extension $\ol{g}$ of the horizontal metric $g$ has the block diagonal form
	$$
	\ol{g}=\left[%\def\arraystretch{1.2}%
	\begin{array}{cccc}
	g & 0 & 0 & 0\\
	0 & g_2 & 0 &0\\
	0 & 0 & \ddots & 0 \\
	0 & 0 & 0 & g_m \\
	\end{array}\right].
	$$
	Here the block matrices $g_s$ constitute an inner product for the vector spaces spanned by $X_{k_{s-1}+1},\ldots, X_{k_{s}}$, $s=1,\ldots,m$. The inverse metric $\ol{g}^{-1}$ has blocks $g_s^{-1}$ with components
	$$
	g_s^{\a\b}=g(b^\a,b^\b), \quad \a,\b\in \{k_{s-1}+1,\ldots, k_{s}\}.
	$$
	This notation means
	$$
	g_s^{\a\b}=b^\a_{i_1i_2\cdots i_s}g^{i_1j_1}g^{i_2j_2}\cdots g^{i_sj_s}b^\b_{j_1j_2\cdots j_s},
	$$
	where we have denoted the components of the inverse of the horizontal metric by upper indices.
\end{proposition}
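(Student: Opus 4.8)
The plan is to follow the classical construction of the Popp measure/metric, as presented by Barilari and Rizzi, but carried out with respect to a general (not necessarily horizontally orthonormal) adapted frame $X = (X_1,\ldots,X_n)$. First I would fix $p \in M$ and the adapted frame on $U$, together with its dual coframe $(\omega^1,\ldots,\omega^n)$. The key structural input is that the nilpotentization $gr(\mD) = \bigoplus_{s=1}^m \mD^s/\mD^{s-1}$ is a graded object: the $s$-th layer $\mD^s/\mD^{s-1}$ is spanned by the classes of $X_{k_{s-1}+1},\ldots,X_{k_s}$, and the bracket map $\otimes^s \mD^1 \to \mD^s/\mD^{s-1}$ is surjective with the adapted structure constants $b^\beta_{i_1\cdots i_s}$ recording it. The Popp inner product on $T_pM$ is \emph{defined} to be the one that makes this grading orthogonal and that, on each layer, is the inner product induced from $g$ on $\mD^1$ via the surjective bracket map, using the induced metric on the quotient $(\otimes^s \mathbb{R}^k)/(\ker)$. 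So the block-diagonal form with blocks $g = g_1, g_2, \ldots, g_m$ is immediate from the definition once one unwinds what ``induced metric on a quotient by a surjection'' means.

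The substantive computation is the explicit formula for $g_s^{-1}$. Here I would proceed as follows. The $s$-th bracket map can be viewed as a surjective linear map $B_s \colon \bigotimes^s \mathbb{R}^k \to \mD^s_p/\mD^{s-1}_p$, sending the basis tensor $e_{i_1}\otimes\cdots\otimes e_{i_s}$ to $\sum_\beta b^\beta_{i_1\cdots i_s} [X_\beta]$. Equip $\bigotimes^s \mathbb{R}^k$ with the tensor-power inner product coming from $g$ on $\mathbb{R}^k \cong \mD^1_p$ (so the Gram matrix in the $e_i$ basis is $g^{\otimes s}$, whose inverse has components $g^{i_1j_1}\cdots g^{i_sj_s}$). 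The standard recipe for the induced metric on the image of a surjective map: if $B \colon V \twoheadrightarrow W$ is surjective and $V$ carries an inner product, the induced metric on $W$ is the one under which $B$ restricted to $(\ker B)^\perp$ is an isometry; dually, the induced \emph{co}metric (inner product on $W^*$) is $\langle \xi, \eta \rangle_{W^*} = \langle B^*\xi, B^*\eta\rangle_{V^*}$, where $B^* \colon W^* \to V^*$ is the transpose and $V^*$ carries the dual inner product. Applying this with $W = \mD^s_p/\mD^{s-1}_p$, identified via the frame with $\mathbb{R}^{n_s}$, and noting that $B_s^*(\omega^\alpha)$ has components $b^\alpha_{i_1\cdots i_s}$ in the dual tensor basis, gives exactly
$$
g_s^{\alpha\beta} = \langle B_s^*\omega^\alpha, B_s^*\omega^\beta\rangle = b^\alpha_{i_1\cdots i_s}\, g^{i_1 j_1}\cdots g^{i_s j_s}\, b^\beta_{j_1\cdots j_s},
$$
which is the claimed expression $g_s^{\alpha\beta} = g(b^\alpha, b^\beta)$ in the notation where $b^\alpha$ is regarded as an element of $\bigotimes^s \mD^1_p$ (well-defined because $\omega^\alpha$, $\alpha \in \{k_{s-1}+1,\ldots,k_s\}$, annihilates $\mD^{s-1}$ and lower layers, as already noted before the statement). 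I would then check consistency: the $s=1$ block is $g$ itself, since $B_1 = \mathrm{id}$ and $b^i_j = \delta^i_j$.

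The step I expect to be the main obstacle — or at least the one requiring the most care — is verifying that the inner product just described is genuinely the \emph{Popp} one, i.e.\ that it agrees with Barilari--Rizzi's construction once their orthonormality hypothesis on $(X_1,\ldots,X_k)$ is dropped, and in particular that it is independent of the choice of adapted frame (so that ``$\ol g$'' is well-defined as a local Riemannian metric, even though, as the paper stresses, it does not patch to a global object). For this I would argue that a change of adapted frame acts on each layer $\mD^s_p/\mD^{s-1}_p$ by a linear map that is compatible, through the bracket maps, with the change on $\mD^1_p$; since the construction of the induced quotient metric is functorial under such compatible changes, the resulting inner product on each layer is intrinsic, depending only on $g|_{\mD_p}$ and the bracket structure — which is precisely Popp's metric. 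The transpose-of-surjection bookkeeping, and keeping straight which indices are ``up'' (cometric/dual) versus ``down'', is the only real place where an error could creep in; everything else is a direct unwinding of definitions.
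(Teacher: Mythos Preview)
Your proposal is correct and follows essentially the same route as the paper: both defer to the Barilari--Rizzi construction of the Popp metric via the surjective bracket map $\otimes^s\mD\to\mD^s/\mD^{s-1}$, and both observe that dropping the orthonormality assumption on $(X_1,\ldots,X_k)$ simply replaces $\delta^{ij}$ by $g^{ij}$ in the cometric formula. Your write-up is actually more explicit than the paper's (which just says ``going through the same calculations as there''), since you spell out the linear-algebra fact that the induced cometric on the quotient is $\langle B_s^*\xi,B_s^*\eta\rangle_{V^*}$.

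One small correction to your final paragraph: the Popp extension $\ol g$ is \emph{not} independent of the choice of adapted frame, even locally. The paper makes this point explicitly right after the proposition: under a change of adapted frame with transition matrix $T$, $\ol g$ transforms by $\mathrm{diag}(T)^T\,\ol g\,\mathrm{diag}(T)$ rather than $T^T\ol g\,T$, so $\ol g$ is a genuine inner product only on $gr(\mD)$, not on $TM$. For the present proposition you do not need frame-independence at all---the statement is just a formula in a fixed adapted frame---so you can drop that concern entirely; the transformation behaviour is the content of the next proposition.
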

\begin{proof}
	This is essentially proven in~\cite{br13} (see also~\cite{m02}) in a setting where one uses a local orthonormal adapted frame for $\mD$ instead of a general local adapted frame $(X_1,\ldots,X_k)$ for $\mD$ we use. Thus, we only sketch the proof.
	
	Using an \emph{orthonormal} basis for $\mD$ it was shown in~\cite[Eq. 24]{br13} that in their notation
	$$
	B_s^{\a\b}=\sum_{i_1,i_2,\ldots,i_s}^kb^\a_{i_1i_2\cdots i_s}b^\b_{i_1i_2\cdots i_s},
	$$
	$\a,\b\in \{k_{s-1}+1,\ldots, k_{s}\}$. Going through the same calculations as there to derive this formula, we find that in general the inverse $g_s^{-1}$ of the $s^{\text{\tiny th}}$ block matrix $g_s$ has the components
	$$
	g(b^\a,b^\b). 
	$$
	Here, for $\a,\b\in \{k_{s-1}+1,\ldots, k_{s}\}$ each $b^\a$ and $b^\b$ are the $s^{\text{\tiny th}}$ adapted structure constants considered as $s$-tensors on the horizontal space $\mD$. The inner product $g(b^\a,b^\b)$ of $b^\a$ and $b^\b$ is then just 
	$$
	g(b^\a,b^\b)=b^\a_{i_1i_2\cdots i_s}g^{i_1j_1}g^{i_2j_2}\cdots g^{i_sj_s}b^\b_{j_1j_2\cdots j_s},
	$$
	where the Einstein summation is implied in the indices $i_l,j_l\in\{1,\ldots,k\}$, $l=1,\ldots,s$.
	That is, by the usual convention of denoting by upper indices the components of the inverse of a Riemannian metric, we have that
	$$
	g_s^{\a\b}=g(b^\a,b^\b)=b^\a_{i_1i_2\cdots i_s}g^{i_1j_1}g^{i_2j_2}\cdots g^{i_1j_1}b^\b_{j_1j_2\cdots j_s}.
	$$
	We have now constructed the inverses of the blocks of the Popp extension $\ol{g}$ of $g$. The matrices $(g_s^{\a\b})$ are invertible~\cite[p.9]{br13} concluding the proof.
\end{proof}

We proceed by studying how a Popp extension transforms under a change of adapted frames.
\begin{proposition}\label{local_Popp}
	Let $(M,g)$ be a subRiemannian manifold. Let $(X_1,X_2,\ldots,X_n)$ and $(Y_1,Y_2,\ldots,Y_n)$ be two sets of adapted frames near $p\in M$. Let $T$ be the change of frame transformation from $X$-frame to $Y$-frame, i.e.,
	$$
	Y_i=T_i^{j}X_j.
	$$
	Then $T$ is lower block triangular
	$$
	T=\left[%\def\arraystretch{1.2}%
	\begin{array}{cccc}
	T & 0 & 0 & 0\\
	\# & T_2 & 0 &0\\
	\# & \# & \ddots & 0 \\
	\# & \# & \# & T_m \\
	\end{array}\right].
	$$
	
	Let $\ol{g}$ and $\widetilde{g}$ be the Popp extensions of $g$ constructed with respect to frames $X$ and $Y$ respectively. Then the blocks $g_s$ and $\widetilde{g}_s$, $s=1,\ldots,m$, of $\ol{g}$ and $\widetilde{g}$ satisfy
	$$
	g_s=T_s^T\widetilde{g}_s T_s.
	$$
	%	In particular, the Popp extension is block diagonal in any adapted basis.
\end{proposition}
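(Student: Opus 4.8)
The plan is to reduce the claimed transformation rule $g_s = T_s^T\widetilde g_s T_s$ to a statement about the \emph{inverse} blocks, where the previous Proposition gives us the explicit formula $g_s^{\alpha\beta}=g(b^\alpha,b^\beta)$, and then track how the adapted structure constants $b^\alpha_{i_1\cdots i_s}$ transform under the change of frame $Y_i = T_i^j X_j$. The first step is to prove that $T$ is lower block triangular. This is forced by the fact that both $X$ and $Y$ are \emph{adapted} frames: for each $s$, the vectors $Y_{k_{s-1}+1},\dots,Y_{k_s}$ lie in $\mathcal D^s$, and $\mathcal D^s$ is spanned by $X_1,\dots,X_{k_s}$, so the expansion of $Y_i$ (with $i\le k_s$) in the $X$-frame can only involve $X_1,\dots,X_{k_s}$. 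Hence the $(r,s)$ block of $T$ vanishes for $r<s$, and the diagonal blocks $T_s$ are exactly the induced isomorphisms $\mathcal D^s/\mathcal D^{s-1}\to\mathcal D^s/\mathcal D^{s-1}$ expressed in the respective bases.

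The second and main step is to compute how the structure constants transform. Write $\widetilde b^\alpha_{i_1\cdots i_s}$ for the adapted structure constants of the $Y$-frame, i.e.\ $\widetilde b^\alpha_{i_1\cdots i_s} = \widetilde\omega^\alpha\big([Y_{i_1},[\cdots,[Y_{i_{s-1}},Y_{i_s}]]]\big)$ where $\widetilde\omega$ is dual to $Y$. Expanding each $Y_{i_\ell}=T_{i_\ell}^{\,j_\ell}X_{j_\ell}$ (with $i_\ell,j_\ell\in\{1,\dots,k\}$, since $T_1=T$ acts on the horizontal layer) and iterating the Leibniz rule for Lie brackets, the leading term modulo $\mathcal D^{s-1}$ is $T_{i_1}^{\,j_1}\cdots T_{i_s}^{\,j_s}[X_{j_1},[\cdots,[X_{j_{s-1}},X_{j_s}]]]$; all the terms where a bracket falls on a coefficient function $T_{i_\ell}^{\,j_\ell}$ produce vector fields of lower non-holonomic order, hence lie in $\mathcal D^{s-1}$ and are killed. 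Applying $\widetilde\omega^\alpha$ and using $\widetilde\omega^\alpha = (T_s^{-1})^\alpha_{\ \gamma}\,\omega^\gamma$ on the $s$-th layer, one gets
\begin{equation*}
\widetilde b^\alpha_{i_1\cdots i_s} = (T_s^{-1})^\alpha_{\ \gamma}\; T_{i_1}^{\,j_1}\cdots T_{i_s}^{\,j_s}\; b^\gamma_{j_1\cdots j_s}.
\end{equation*}
In tensorial language: as an $s$-covariant tensor on $\mathcal D$ valued in $\mathcal D^s/\mathcal D^{s-1}$, $\widetilde b^\alpha = (T_s^{-1})^\alpha_{\ \gamma}\,(T^{\otimes s})^* b^\gamma$, where $T=T_1$ is the horizontal change of basis.

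The third step combines this with the formula for the inverse Popp blocks. Since $\widetilde g$ is built from the $Y$-frame, $\widetilde g_s^{\alpha\beta} = \widetilde g(\widetilde b^\alpha,\widetilde b^\beta)$, but here the inner product $\widetilde g$ on the horizontal layer is $g$ written in the $Y$-basis, i.e.\ $\widetilde g^{ij} = T^i_{\ a}\,g^{ab}\,T^j_{\ b}$ (the inverse horizontal metric transforms contravariantly). Substituting the transformation of $\widetilde b^\alpha$ and contracting, the factors of $T$ from the structure constants meet the factors of $T$ from $\widetilde g^{ij}$; after the dust settles one finds $\widetilde g_s^{\alpha\beta} = (T_s^{-1})^\alpha_{\ \gamma}(T_s^{-1})^\beta_{\ \delta}\,g_s^{\gamma\delta}$, i.e.\ $\widetilde g_s^{-1} = T_s^{-1} g_s^{-1} (T_s^{-1})^T$. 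Inverting gives $\widetilde g_s = T_s^T g_s T_s$, equivalently $g_s = (T_s^{-1})^T \widetilde g_s T_s^{-1}$. I should double-check the placement of transpose versus inverse against the stated convention $Y_i = T_i^j X_j$ and the claim $g_s = T_s^T\widetilde g_s T_s$; the bookkeeping of which index is contravariant will determine whether the final answer uses $T_s$ or $T_s^{-1}$, and I expect the statement as written corresponds to reading $T$ as the matrix sending $Y$-components to $X$-components, so that on the horizontal metric blocks (covariant 2-tensors) one indeed gets $g_s = T_s^T\widetilde g_s T_s$.

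The main obstacle is the second step: justifying rigorously that all the ``derivative'' terms arising when a Lie bracket hits a structure-function $T_i^j$ genuinely land in $\mathcal D^{s-1}$. This is where equiregularity and the defining property of adapted frames are essential — one argues by induction on $s$, using that $[\mathcal D^1,\mathcal D^{s-1}]\subset \mathcal D^s$ together with the observation that multiplying a section of $\mathcal D^r$ by a smooth function stays in $\mathcal D^r$, so differentiating a coefficient never increases the layer while it does consume one bracket. Everything else is linear algebra and the already-established formula for $\overline g_s^{-1}$; since the previous proposition's proof was only sketched citing \cite{br13}, it would be consistent to likewise keep the computational details of this transformation brief and refer to the analogous derivation there.
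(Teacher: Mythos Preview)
Your approach is essentially the paper's: show $T$ is lower block triangular from the adapted-frame condition, compute the transformation of the adapted structure constants (the paper does this via the dual-frame relation $\eta=T^{-T}\omega$ together with the fact that $\omega^l$ for $l>k_s$ annihilates $\mathcal D^s$, which is equivalent to your Leibniz-rule argument that the derivative terms fall into $\mathcal D^{s-1}$), and then substitute into $g_s^{\alpha\beta}=g(b^\alpha,b^\beta)$. On your bookkeeping worry: the paper's own proof in fact concludes with $\widetilde g_s=T_s^{T} g_s T_s$, matching what you derived, so the displayed formula in the statement has $g_s$ and $\widetilde g_s$ interchanged.
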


\begin{proof}[Proof of Proposition~\ref{local_Popp}]
	Since both of the frames $X$ and $Y$ are adapted, $T$ is lower block triangular with respect to basis $X$ and $Y$. We denote the diagonal blocks of $T$ by $T_s$, with the convention that $T_1=T$. 
	
	Let $s=1,\ldots,m$ and let $\ol{g}$ and $\widetilde{g}$ be the Popp extensions of $g$ relative to frames $X=(X_1,\ldots,X_n)$ and $Y=(Y_1,\ldots,Y_n)$ respectively. We make the following convention. We denote the adapted structure constants with respect to the frame $X$ by
	$$
	b^\a_{i_1\cdots i_s}
	$$
	and we use the (physicists') primed indices notation
	$$
	b^{\a'}_{i'_1\cdots i'_s}
	$$
	to denote the adapted structure constants with respect to the other frame $Y$. We also denote the components of $g_s^{-1}$ and $\widetilde{g}_s^{-1}$ by $g_s^{\a\b}$ and $g_s^{\a'\b'}$ respectively. Here $\a,\a'\b,\b'=k_{s-1}+1,\ldots, k_s$.  We also remind that we use the Einstein summation convention.
	
	We note that if $\omega=(\omega^1,\ldots,\omega^n)$ is a dual basis of $X$ and $\eta=(\eta^1,\ldots,\eta^n)$ is a dual basis of $Y$, then we have the standard formula
	$$
	\eta=T^{-T}\omega
	$$
	on how a change of basis induces a change of the dual basis. Since $T^{-T}$ is upper block triangular, it follows that
	\begin{align}\label{b_trans}
		b^{\a'}_{i'_1\cdots i'_s}&=\eta^{\a'}([X_{i'_1},[X_{i'_2},\ldots,[X_{i'_{s-1}},X_{i'_s}]]])=(T_s^{-T}\omega(s))^{\a'}[X_{i'_1},[X_{i'_2},\ldots,[X_{i'_{s-1}},X_{i'_s}]]] \nonumber \\
		&+ \mbox{ a linear combination of $\omega_l$, $l > n_s$}, \mbox{operating on } [X_{i'_1},[X_{i'_2},\ldots,[X_{i'_{s-1}},X_{i'_s}]]] \nonumber \\
		&= (T_s^{-T}\omega(s))^{\a'}([X_{i'_1},[X_{i'_2},\ldots,[X_{i'_{s-1}},X_{i'_s}]]].
	\end{align}
	Here we have denoted
	$$
	\omega(s)= (\omega^{k_{s-1}+1},\ldots,\omega^{k_s})
	$$
	and we understand that the notion
	$$
	(T_s^{-T}\omega(s))^{\a'}
	$$
	refers to the $(\a'-k_{s-1})^{\tiny th}$ element of the $(k_s-k_{s-1})$-vector $T_s^{-T}\omega(s)$. %(We will not need these slightly cumbersome notations after this proof).
	
	The last equality in~\eqref{b_trans} holds since dual vectors $\omega^l$, $l > n_s$, annihilate all the lower layers $\mD_t$, $t\leq s$. This follows from the definition of a dual frame and from the fact that the frame itself is adapted. %from the fact that each $\mD_t$ has a basis $(X_1,\ldots,X_{n_s})$. 
	Thus we have
	$$
	b^{\a'}_{i'_1\cdots i'_s}=(T_s^{-T})^{\a'}_{\ph \a}b^{\a}_{i'_1\cdots i'_s}.%=(T_s^{-1})^{\ph \a'}_{\a}b^{\a}_{i'_1\cdots i'_s}.%=(T_s^{-T})^{\a'}_{\ph \a}b^{\a}_{i'_1\cdots i'_s}
	$$
	Here we have denoted the components of the blocks of $T$ by $(T_s)^{\ph \a}_{\a'}$ thus satisfying
	$$
	Y_{\a'}=(T_s)^{\ph \a}_{\a'} X_{\a}, \quad \a,\a'\in \{k_{s-1}+1,\ldots, k_{s}\}
	$$
	and
	$$
	(T_s^{-T})^{\a'}_{\ph \a}=(T^{-1}_s)_{\a}^{\ph \a'}.
	$$
	
	We now have that
	$$
	g_s^{\a\b}=g(b^\a,b^\b),
	$$
	and by the multilinearity of the nested brackets we also have
	$$
	b^{\a'}_{i'_1i'_2\cdots i'_s}=(T^{-1}_s)_{\a}^{\ph \a'} b^{\a}_{i_1 i_2\cdots i_s} \,  T^{\ph i_1}_{i'_1}T^{\ph i_2}_{i'_2}\cdots T^{\ph i_s}_{i'_s}.
	$$
	By the tensoriality of the horizontal metric we have
	$$
	g^{i'j'}=(T^{-1})^{\ph i'}_{i} (T^{-1})^{\ph j'}_{j} g^{ij}.
	$$
	Consequently, plugging in these transformation rules, we have that
	$$
	g_s^{\a'\b'}=(T^{-T}_s)_{\a}^{\ph \a'} \tilde{g}_s^{\a\b} (T^{-T}_s)_{\b}^{\ph \b'}.
	$$
	That is, 
	$$
	\widetilde{g}_s=T_s^T g_s T_s.
	$$
\end{proof}

We record the following fact: If the Popp extension $\ol{g}$ of $g$ would be a proper $2$-tensor field on $M$ it would change under the change of an adapted frame (or any frame in general) as $\ol{g}\hookrightarrow T^{T}\ol{g} T$. Proposition~\ref{local_Popp} above shows that instead $\ol{g}$ transforms as
$$
\ol{g} \hookrightarrow\mbox{diag}(T)^T\ol{g}\ \mbox{diag}(T).
$$
Here $\mbox{diag}(T)$ stands for the block diagonal matrix with the block matrices $T_i$, $i=1,\ldots,m$, as its diagonal blocks. It follows that the Popp extension does not constitute a well defined inner product for $TM$, but instead yields an inner product for $gr(\mD)$. 
%We record this fact though it is not essential to our work.%This 

\begin{corollary}
	Let $(M,g)$ be an equiregular subRiemannian manifold with horizontal metric $g$. The Popp extension $\ol{g}$ of $g$ defines an inner product for $gr(\mD)$. 
\end{corollary}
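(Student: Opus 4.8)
The plan is to deduce the corollary directly from Proposition~\ref{local_Popp} together with the block-triangular structure of the change-of-frame transformation. The point is that the Popp extension $\ol g$ is not a genuine $(0,2)$-tensor on $TM$, but its restriction to each graded layer transforms in the way a tensor on the associated graded bundle $gr(\mD)$ would transform.

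First I would fix, for each point $p\in M$, two adapted frames $X$ and $Y$ near $p$, and recall that the change-of-frame matrix $T$ (defined by $Y_i=T_i^jX_j$) is lower block triangular because both frames are adapted. The diagonal blocks $T_1=T,T_2,\ldots,T_m$ are exactly the induced isomorphisms on the quotients $\mD^s/\mD^{s-1}$: indeed, reducing $Y_{\alpha'}=(T_s)_{\alpha'}^{\ \alpha}X_\alpha + (\text{lower-layer terms})$ modulo $\mD^{s-1}$ shows that $T_s$ represents the transition map of the bundle $\mD^s/\mD^{s-1}$ in the bases $[X_{k_{s-1}+1}],\ldots,[X_{k_s}]$ and $[Y_{k_{s-1}+1}],\ldots,[Y_{k_s}]$. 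Hence the collection $\mathrm{diag}(T)=(T_1,\ldots,T_m)$ is precisely the transition cocycle of $gr(\mD)=\bigoplus_{s=1}^m \mD^s/\mD^{s-1}$ relative to the two adapted frames.

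Next I would invoke Proposition~\ref{local_Popp}, which gives $g_s=T_s^T\widetilde g_s T_s$ for every $s=1,\ldots,m$; equivalently $\ol g\hookrightarrow \mathrm{diag}(T)^T\,\ol g\,\mathrm{diag}(T)$ under the change of adapted frame. This is exactly the transformation law of a fiberwise inner product on the graded bundle $gr(\mD)$: each block $g_s$ is a symmetric positive-definite bilinear form on the fibers of $\mD^s/\mD^{s-1}$ (positive-definiteness of $g_s$ follows from the invertibility and the explicit formula $g_s^{\alpha\beta}=g(b^\alpha,b^\beta)$ established in the local Popp extension proposition, since $g$ is positive definite on $\mD$ and the $b^\alpha$ are linearly independent), and the cocycle condition on overlaps is automatic because it holds for $T$. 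Therefore the locally defined blocks $(g_s)$ glue to a well-defined, globally defined smooth fiberwise inner product on $gr(\mD)$, independent of the choice of adapted frames. That is the assertion of the corollary.

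I do not expect a serious obstacle here; the content is essentially bookkeeping. The only point requiring a little care is the identification of the diagonal block $T_s$ with the genuine transition map of $\mD^s/\mD^{s-1}$ — i.e. checking that the off-diagonal (lower-layer) contributions of $T$ drop out when one passes to the quotient, which is exactly the computation already carried out in the proof of Proposition~\ref{local_Popp} via equation~\eqref{b_trans}. Once that is in hand, gluing local data satisfying a cocycle compatibility into a global section of the appropriate bundle of symmetric positive-definite forms is standard, and smoothness is inherited from the smoothness of the adapted frames and of $g$.
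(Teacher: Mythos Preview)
Your proposal is correct and follows essentially the same approach as the paper's proof: cover $M$ by adapted frames, invoke the transformation rule $\ol g \hookrightarrow \mathrm{diag}(T)^T\,\ol g\,\mathrm{diag}(T)$ from Proposition~\ref{local_Popp}, and conclude that the blocks glue to a well-defined inner product on $gr(\mD)$. Your write-up is more detailed than the paper's---in particular, your explicit identification of the diagonal blocks $T_s$ with the transition maps of $\mD^s/\mD^{s-1}$ and your remark on positive-definiteness are useful elaborations---but the underlying argument is the same.
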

\begin{proof}
	Cover $M$ by an open sets where on each open set an adapted frame is defined. Adapted frame induces a local frame for $gr(\mD)$. The transformation rule 
	$$
	\ol{g} \hookrightarrow\mbox{diag}(T)^T\ol{g}\ \mbox{diag}(T).
	$$
	shows that on the overlap of the open sets, the inner product defined with respect to different adapted frames agree.	This completes the proof.
\end{proof}

We remark that a local Popp extension defined on an open neighborhood of $p\in M$ can be continued (from a slightly smaller open neighborhood $V$ of $p$) to a global Riemannian metric on $M$ by using a partition of unity. Of course this continuation then has little to do with the subRiemannian structure or the horizontal metric outside $V$ in this case.

As we go on, we will see that even though the Popp extension is local, it contains useful information for the study of quasiregularity in the subRiemannian setting.

\subsection{Comparing two Popp extensions}\label{subsec:compare two popp extensions}
We will later give a new definition of quasiregular mappings between subRiemannian manifolds using two Popp extensions $\ol{g}$ and $\ol{h}$ of two horizontal metrics $g$ and $h$. To show that the resulting definition is equivalent to the standard ones in the literature, we will need to compare eigenvalues of the \emph{distortion matrix} $\ol{g}^{-1}\ol{h}$ of the Popp extensions of two horizontal metrics $g$ and $h$. The notation $\ol{g}^{-1}$ here means the inverse of $\ol{g}$. 

The approach is a natural generalization of the one introduced in~\cite{l14} to define quasiregular mappings between Riemannian manifolds. In~\cite{l14} the distortion matrix is called a \emph{distortion tensor}, since in that case $\ol{g}^{-1}\ol{h}$ is a true $\binom{1}{1}$-tensor field on $M$ in the standard sense. Even though in our situation $\ol{g}^{-1}\ol{h}$ is (typically) not a tensor field, it still has well defined eigenvalues.

\begin{proposition}\label{evs_inv}
	Let $M$ be an equiregular subRiemannian manifold and let $g$ and $h$ be two horizontal metrics on it. The eigenvalues of the distortion matrix $\ol{g}^{-1}\ol{h}$ are well defined, i.e. independent of the choice of the adapted frame used in the construction of the Popp extensions $\ol{g}$ and $\ol{h}$. In particular, the eigenvalues of the distortion matrix are the same in \emph{any} frame.
\end{proposition}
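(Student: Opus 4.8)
The plan is to reduce everything to the transformation rule already established in Proposition~\ref{local_Popp}, combined with the elementary fact that conjugate matrices share the same eigenvalues. The conceptual point is this: although a single Popp extension $\ol{g}$ is \emph{not} a genuine $\binom{0}{2}$-tensor on $M$ — under a change of adapted frame $Y_i=T_i^jX_j$ it transforms by the block-diagonal matrix $\mathrm{diag}(T)=\mathrm{diag}(T_1,\dots,T_m)$ rather than by $T$ — the non-tensoriality defect $\mathrm{diag}(T)$ is \emph{the same} for $\ol{g}$ and for $\ol{h}$, because both extensions are built from one and the same adapted frame. Hence this defect cancels in the product $\ol{g}^{-1}\ol{h}$.

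Concretely, let $X=(X_1,\dots,X_n)$ and $Y=(Y_1,\dots,Y_n)$ be two adapted frames near $p$ with change-of-frame transformation $T$ as in Proposition~\ref{local_Popp}, and let $\ol{g},\ol{h}$ (respectively $\widetilde{g},\widetilde{h}$) be the Popp extensions of $g$ and $h$ relative to $X$ (respectively $Y$). First I would apply Proposition~\ref{local_Popp} block by block to $g$ and to $h$ separately, which gives $\ol{g}=\mathrm{diag}(T)^T\,\widetilde{g}\,\mathrm{diag}(T)$ and $\ol{h}=\mathrm{diag}(T)^T\,\widetilde{h}\,\mathrm{diag}(T)$, with the very same $T$ in both identities. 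A one-line computation then yields
$$
\ol{g}^{-1}\ol{h}=\mathrm{diag}(T)^{-1}\,\widetilde{g}^{-1}\,\mathrm{diag}(T)^{-T}\,\mathrm{diag}(T)^{T}\,\widetilde{h}\,\mathrm{diag}(T)=\mathrm{diag}(T)^{-1}\big(\widetilde{g}^{-1}\widetilde{h}\big)\mathrm{diag}(T),
$$
so the distortion matrices computed in the two adapted frames are similar. Similar matrices have the same characteristic polynomial, hence the same eigenvalues counted with multiplicity; this proves independence of the chosen adapted frame. It is also worth recording at this point that $\ol{g}$ and $\ol{h}$ are honest Riemannian metrics, so $\ol{g}^{-1}\ol{h}$ is positive definite and self-adjoint with respect to $\ol{g}$, and its eigenvalues are genuine positive real numbers; thus the statement is unambiguous.

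For the final assertion — that the eigenvalues are the same ``in any frame'' — once $\ol{g}$ and $\ol{h}$ have been fixed in some adapted frame $X$, passing to an arbitrary, not necessarily adapted, frame $Z_i=S_i^jX_j$ transforms them as ordinary tensors, $\ol{g}\mapsto S^T\ol{g}S$ and $\ol{h}\mapsto S^T\ol{h}S$, so that $\ol{g}^{-1}\ol{h}\mapsto S^{-1}(\ol{g}^{-1}\ol{h})S$; again a conjugation, and again the eigenvalues are unchanged. (Equivalently: $\ol{g}^{-1}\ol{h}$ is a $\binom{1}{1}$-object, being the composition of the $\binom{2}{0}$-object $\ol{g}^{-1}$ with the $\binom{0}{2}$-object $\ol{h}$, whence its eigenvalues are frame-independent in the usual linear-algebra sense.)

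I do not expect a serious obstacle: the substantive work is already contained in Proposition~\ref{local_Popp}, and the only thing requiring care is the bookkeeping — using the block-diagonal transformation $\mathrm{diag}(T)$ for the Popp extension rather than $T$ itself, and observing that this defect is identical for $g$ and $h$ and therefore drops out of $\ol{g}^{-1}\ol{h}$. This is exactly the mechanism that makes the distortion matrix a meaningful object globally even though the individual Popp extensions are not, paralleling the tensorial situation of~\cite{l14}.
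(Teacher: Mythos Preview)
Your proof is correct and follows essentially the same approach as the paper: both reduce to Proposition~\ref{local_Popp} and then invoke similarity of matrices. The only cosmetic difference is that you conjugate directly by the block-diagonal matrix $\mathrm{diag}(T)$ to get $\ol{g}^{-1}\ol{h}=\mathrm{diag}(T)^{-1}(\widetilde{g}^{-1}\widetilde{h})\,\mathrm{diag}(T)$, whereas the paper conjugates by the full lower-block-triangular $T$, obtains a lower-block-triangular matrix, and then observes that its diagonal blocks are $T_s^{-1}g_s^{-1}h_sT_s$; both routes arrive at the same block-wise similarity and hence the same eigenvalues.
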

\begin{proof}
	Let $\ol{g},\ol{h}$ be local Popp extensions of $g$ and $h$ constructed with respect to the (same) adapted frame $X$, and let $\widetilde{g}, \widetilde{h}$ be those constructed with respect to another adapted frame $Y$. The eigenvalues of $\ol{g}^{-1}\ol{h}$ are defined by its diagonal blocks $g_s^{-1}h_s$, $s=1,\ldots,m$.
	
	Under the change of adapted basis, we have by Proposition~\ref{local_Popp} that
	$$
	\widetilde{g}^{-1}\widetilde{h}=T^{-1}\ol{g}^{-1}\ol{h}T,
	$$
	where $T$ (and thus also $T^{-1}$) is lower block triangular. Thus the matrix on the right hand side is lower block diagonal. It follows that its eigenvalues are determined by the eigenvalues of its diagonal blocks.  %This is because the matrix on the right hand side is lower block diagonal.
	
	These diagonal blocks are of the form
	$$
	T_s^{-1}g_s^{-1}h_sT_s.
	$$
	These have the same eigenvalues as $g_s^{-1}h_s$ by similarity. Thus the eigenvalues are well defined.
	
	The latter claim follows since \emph{any} change of frames induces a similarity transformation of $\ol{g}^{-1}\ol{h}$ which we have now shown to have a well defined eigenvalue equation. This concludes the proof.
\end{proof}

The following result, which gives estimates for the eigenvalues of the distortion matrix, will be of key importance in our later proofs. 
\begin{proposition}\label{eigenval_compr}
	Let $g$ and $h$ be two horizontal metrics. Let us denote the eigenvalues of $g^{-1}h$ by $\lambda_1\leq\cdots\leq\lambda_k$ arranged in increasing order, and denote the eigenvalues of the distortion matrix $\ol{g}^{-1}\ol{h}$ by $\mu_1\leq\mu_2\leq\cdots\leq \mu_n$ also arranged in increasing order.
	
	For eigenvalue calculations we can assume that the distortion matrix $\ol{g}^{-1}\ol{h}$ is of the block diagonal form
	$$
	\ol{g}^{-1}\ol{h}=\left[%\def\arraystretch{1.2}%
	\begin{array}{cccc}
	g^{-1}h & 0 & 0 & 0\\
	0 & g_2^{-1}h_2 & 0 &0\\
	0 & 0 & \ddots & 0 \\
	0 & 0 & 0 & g_m^{-1}h_m \\
	\end{array}\right].
	$$
	If $\mu_s$ is an eigenvalue of a block of $g_s^{-1}h_s$, then it satisfies the bounds
	$$
	\mu_s\leq \lambda_k^s \mbox{ and } \mu_s\geq \lambda_1^s.
	$$
	Consequently, the eigenvalues $\mu_l$, $l=1,\ldots,n$, of the distortion matrix $\ol{g}^{-1}\ol{h}$ satisfy
	\begin{equation}\label{lambas_and_det}
		\lambda_1^{Q-1}\lambda_k\leq\det(\ol{g}^{-1}\ol{h})\leq \lambda_1\lambda_k^{Q-1}.
	\end{equation}
\end{proposition}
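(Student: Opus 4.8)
The plan is to reduce the statement to a single‑layer eigenvalue estimate via the block structure of the Popp extension, and then to settle that estimate with a Rayleigh quotient. First I would apply the Local Popp extension proposition using \emph{one and the same} adapted frame $X=(X_1,\dots,X_n)$ for both horizontal metrics; this is legitimate because an adapted frame depends only on the distribution $\mD$ and not on the chosen metric. With this common frame, $\ol g=\mathrm{diag}(g,g_2,\dots,g_m)$ and $\ol h=\mathrm{diag}(h,h_2,\dots,h_m)$ are simultaneously block diagonal, so $\ol g^{-1}\ol h=\mathrm{diag}(g^{-1}h,\,g_2^{-1}h_2,\dots,g_m^{-1}h_m)$, and by Proposition~\ref{evs_inv} its spectrum (being frame independent) is precisely the union of the spectra of the blocks $g_s^{-1}h_s$. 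Each $g_s,h_s$ is symmetric positive definite, so each $g_s^{-1}h_s$ is diagonalizable with positive eigenvalues; it thus suffices to show that \emph{every} eigenvalue $\mu$ of a block $g_s^{-1}h_s$ satisfies $\lambda_1^{s}\le\mu\le\lambda_k^{s}$, and then to multiply the contributions of the layers.

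For the one‑layer bound I would use the explicit formula from the Local Popp extension proposition: $g_s^{-1}=(g_s^{\alpha\beta})$ and $h_s^{-1}=(h_s^{\alpha\beta})$ are the Gram matrices of the \emph{same} family of $s$-tensors $b^\alpha$, $\alpha\in\{k_{s-1}+1,\dots,k_s\}$ — the $s$-th adapted structure constants, which depend only on the frame $X$ — with respect to the inner products on $s$-tensors on $\mD$ induced by $g$ and by $h$ respectively; invertibility of the Popp blocks forces the $b^\alpha$ to be linearly independent. Now diagonalize $g$ and $h$ simultaneously on $\mD$: pick a $g$-orthonormal basis $e_1,\dots,e_k$ with $h(e_i,e_j)=\lambda_i\delta_{ij}$. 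Expanding the tensors $b^\alpha$ in the basis of $s$-tensors induced from $e_1,\dots,e_k$, one computes $g_s^{-1}=B^{\mathrm T}B$ and $h_s^{-1}=B^{\mathrm T}DB$, where $B$ is the coefficient matrix (of full column rank, by the linear independence of the $b^\alpha$) and $D$ is the diagonal matrix whose entries are the numbers $\prod_{l=1}^{s}\lambda_{i_l}^{-1}$, $1\le i_l\le k$, all of which lie in $[\lambda_k^{-s},\lambda_1^{-s}]$ since $0<\lambda_1\le\lambda_{i_l}\le\lambda_k$. Because $g_s^{-1}h_s=(B^{\mathrm T}B)(B^{\mathrm T}DB)^{-1}$ has the same eigenvalues as $(B^{\mathrm T}DB)^{-1}(B^{\mathrm T}B)$, an eigenvalue $\mu$ of $g_s^{-1}h_s$ comes with a vector $w\ne 0$ satisfying $B^{\mathrm T}Bw=\mu\,B^{\mathrm T}DBw$; setting $u:=Bw\ne 0$ this reads
\[
\mu=\frac{\|u\|^2}{u^{\mathrm T}Du},
\]
and the sandwich $\lambda_k^{-s}\|u\|^2\le u^{\mathrm T}Du\le\lambda_1^{-s}\|u\|^2$ gives at once $\lambda_1^{s}\le\mu\le\lambda_k^{s}$. (For $s=1$ this is the tautology that the eigenvalues of $g^{-1}h$ are the $\lambda_i$.)

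To deduce \eqref{lambas_and_det} I would take determinants of the block diagonal form: $\det(\ol g^{-1}\ol h)=\prod_{s=1}^{m}\det(g_s^{-1}h_s)$. The factor with $s=1$ equals $\det(g^{-1}h)=\prod_{i=1}^k\lambda_i$, and bounding the $k-1$ ``inner'' eigenvalues above by $\lambda_k$ (resp. below by $\lambda_1$) gives $\lambda_1^{\,k-1}\lambda_k\le\prod_{i=1}^k\lambda_i\le\lambda_1\lambda_k^{\,k-1}$. For each $s\ge 2$ the factor $\det(g_s^{-1}h_s)$ is a product of $n_s=k_s-k_{s-1}$ eigenvalues, each in $[\lambda_1^{s},\lambda_k^{s}]$ by the previous paragraph, so it lies in $[\lambda_1^{\,sn_s},\lambda_k^{\,sn_s}]$. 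Multiplying the layer contributions and using $\sum_{s=1}^{m}s\,n_s=Q$ together with $n_1=k$ — so that $(k-1)+\sum_{s=2}^{m}s\,n_s=Q-1$ — yields exactly $\lambda_1^{\,Q-1}\lambda_k\le\det(\ol g^{-1}\ol h)\le\lambda_1\lambda_k^{\,Q-1}$. The one step that is more than bookkeeping is the identification in the middle paragraph of the two layer blocks as Gram matrices of \emph{one fixed} tensor family for two different induced inner products; once that is in place, simultaneous diagonalization of $g$ and $h$ makes the $h$-block diagonal with entries that are $s$-fold products of the $\lambda_i$, and the eigenvalue bound becomes the one‑line Rayleigh quotient estimate above.
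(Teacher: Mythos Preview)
Your proof is correct and follows essentially the same line as the paper's: both identify $g_s^{-1}$ and $h_s^{-1}$ as Gram matrices of the same family of adapted structure tensors and reduce the eigenvalue bound to a Rayleigh quotient against a diagonal operator whose entries are $s$-fold products of the $\lambda_i$. The only cosmetic difference is that the paper normalizes the horizontal frame to be $h$-orthonormal and phrases the diagonal operator as the tensor power $A_gA_h^{-1}$, whereas you simultaneously diagonalize $g$ and $h$ (taking a $g$-orthonormal basis in which $h$ is diagonal); your determinant bookkeeping using $\sum_s s\,n_s=Q$ is also exactly what the paper's one-line ``follows from these bounds'' encodes.
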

%\textit{We probably need to show that the $\det Df=J_{f}$ via Proposition 5.5?}
\begin{proof}
	Let $X=(X_1,\ldots,X_n)$ be an adapted frame, and let $(\omega^1,\omega^2,\ldots,\omega^n)$ be its dual frame. The eigenvalues of of the distortion matrix are given by the block diagonal matrix that has $g_s^{-1}h_s$, $s=1,\ldots,m$, as its blocks by the previous proposition. Let us calculate the claimed bounds for the eigenvalues of the blocks $g_s^{-1}h_s$. To ease the calculations, we assume that the local frame $(X_1,\ldots,X_k)$ for $\mD$ is $h$-orthonormal (i.e. orthonormal with respect to $h$). This is justified by the previous proposition on the invariance of the eigenvalues.
	
	Let $s\in \{1,\ldots,m\}$. We will consider the bilinear form $(g_s^{-1})^{\a\b}=g(b^\a,b^\b)$ as a composition of linear mappings. Let us define
	$$
	b:\left(\mD^s/\mD^{s-1}\right)^*\to \otimes^s\mD^*, \ b(\zeta)=\zeta_\a b^\a_{i_1i_2\cdots i_s} \omega^{i_1} \otimes \omega^{i_2}\otimes \cdots \otimes \omega^{i_s}.
	$$
	Here $\zeta=\zeta_\a\omega^\a\in \left(\mD^s/\mD^{s-1}\right)^*$ and the indices run in the ranges $i_l=1,\ldots,k$ and $\a=k_{s-1}+1,\ldots,k_{s}$. 
	
	Let us also define
	$$
	A_g: \otimes^s\mD^*\to \otimes^s\mD, \ A_g(\eta_1 \otimes \eta_2 \otimes \cdots \otimes \eta_s)=g^{-1}\eta_1\otimes g^{-1}\eta_2\otimes\cdots\otimes  g^{-1}\eta_s.
	$$
	The notation $g^{-1}\eta$ means raising the index of given (horizontal) $1$-form $\eta\in\mD^*$:
	$$
	(g^{-1}\eta)^i=g^{ij}\eta_j, \ i,j=1,\ldots k.
	$$
	We define similarly $A_h: \otimes^s\mD^*\to \otimes^s\mD$ for the horizontal metric $h$. With these notations we have that the inverse $g_s^{-1}$ of the $s^{\text{\tiny th}}$ block of $\ol{g}$ can be written as
	\begin{equation}\label{basis_form_for_gs}
		g_s^{-1}=b^TA_g b.
	\end{equation}
	
	Now, let $\mu=\mu_s$ be an eigenvalue of $g_s^{-1}h_s$. That is, there is a vector $0\neq v\in \mD^s/\mD^{s-1}$ such that
	$$
	(g_s^{-1}h_s)v=\mu v.
	$$
	This is equivalent 
	$$
	g_s^{-1}\hat{v}=\mu h^{-1}_s\hat{v},
	$$
	for $\hat{v}=h_sv$.
	Taking (Euclidean) inner product of this equation against $\hat{v}$, we have that 
	\begin{equation}\label{muest}
		\mu=\frac{\langle \hat{v},g_s^{-1}\hat{v}\rangle}{\langle \hat{v},h_s^{-1}\hat{v}\rangle}=\frac{\langle b\hat{v},A_g b\hat{v}\rangle}{\langle b\hat{v},b\hat{v}\rangle}=\frac{\langle b\hat{v},A_g A_h^{-1}b\hat{v}\rangle}{\langle b\hat{v},b
		\hat{v}\rangle}.
	\end{equation}
	Here in the last two equalities, we have used that $h_{ij}=\delta_{ij}$, $i,j=1,\ldots,k$, by the choice that the basis $(X_1,\ldots,X_k)$ for $\mD$ was orthonormal with respect to $h$ (consequently $A_h^{-1}$ is just a $s$-times tensor product of identity matrices).
	It follows that
	$$
	\mu\geq \lambda_{min} (A_gA_h^{-1}) \mbox{ and } \mu\leq \lambda_{max} (A_gA_h^{-1}),
	$$
	where $\lambda_{min}(A_gA_h^{-1})$ and $\lambda_{max}(A_gA_h^{-1})$ are the smallest and the largest eigenvalue of $A_gA_h^{-1}$. 
	
	Since
	$$
	A_gA_h^{-1}
	$$
	is an $s$-times tensor product of matrices $g^{-1}h$ it follows (by the multilinearity of a tensor product) that the eigenvalues of $A_gA_h^{-1}$ are all the products
	$$
	\lambda_{i_1}\lambda_{i_2}\cdots\lambda_{i_s}
	$$
	of the eigenvalues $\lambda_l$, $l=1,\ldots,k$, of $g^{-1}h$. Consequently, we have
	$$
	\mu\geq \lambda_1^s \mbox{ and } \mu\leq \lambda_k^s.
	$$
	That~\eqref{lambas_and_det} holds, follows from these bounds on $\mu$.
\end{proof}

\begin{remark}\label{rmk:step 2}
	We remark that we can actually have better estimates on the eigenvalues of the distortion matrix $\ol{g}^{-1}\ol{h}$. This is due to the fact that the adapted structure constants have symmetries in their lower indices. We will now make this precise in the step $2$ case.
	
	In the step $2$ case, we have that the Popp extensions of $g$ and $h$ have only two blocks in a given adapted frame. The adapted structure constants are \emph{antisymmetric} in their lower indices
	$$
	b_{ij}^\a=-b_{ji}^\a
	$$
	as is obvious from the definition. It follows that we can write the components $g_2^{\a\b}$ of the inverse of $g_2$ as
	$$
	g_2^{\a\b}=2\sum_{I\in \Gamma}\sum_{J\in \Gamma}b^\a_Ia(g)^{IJ}b^\b_{J},
	$$
	where $a(g)^{IJ}\in \R$ is the determinant of a $2\times 2$ submatrix of $g^{-1}$ indicated by the pair of index sets $I, J\in \Gamma=\{(i_1,i_2): i_1\leq i_2\}$. The matrix $a(g)$ defined by the components $a(g)^{IJ}$, $I,J\in \Gamma$, is actually known in other instances as the \textit{compound matrix}. Its eigenvalues are all the products
	$$
	\lambda_{i_1}\lambda_{i_2}
	$$
	where $(i_1,i_2)\in \Gamma$ and $\lambda_{i_j}$, $j=1,2$, is an eigenvalue of $g^{-1}$; see e.g.~\cite{Horn}.
	
	We can assume the adapted frame $(X_1,\ldots,X_k,\ldots,X_n)$ we are using is such that $(X_1,\ldots,X_k)$ is $h$-orthonormal frame for $\mD$.  Thus using similar notations for $h$, we have that
	$$
	h_2^{\a\b}=2\sum_{I\in \Gamma}\sum_{J\in \Gamma}b^\a_I b^\b_{J}
	$$
	and the $a(h)$ has $1$ as its only eigenvalue (of multiplicity $\binom{k}{2}$).
	
	If now $v\in (\mD^2/\mD)^*$ as in~\eqref{muest}, then we have
	$$
	\mu=\frac{\langle v,g_s^{-1}v\rangle}{\langle v,h_s^{-1}v\rangle}=\frac{\langle bv,a(g) bv\rangle}{\langle bv,bv\rangle}=\frac{\langle bv,a(g) a(h)^{-1}bv\rangle}{\langle bv,bv\rangle}.
	$$
	Here the inner product in the latter two quantities is defined by 
	$$\langle Z, W\rangle=\sum_{I\in\Gamma} Z_IW_I.$$
	It follows that
	$$
	\mu\geq \lambda_{min} (a(g)a(h)^{-1}) \mbox{ and } \mu\leq \lambda_{max} (a(g)a(h)^{-1}).
	$$
	Consequently, we have
	$$
	\mu\geq \lambda_1\lambda_2 \mbox{ and } \mu\leq \lambda_{k-1}\lambda_k,
	$$
	where $\lambda_1\leq \lambda_2\leq\cdots\lambda_k$ are the eigenvalues of $g^{-1}h$.
	
	We have arrived better estimates for the eigenvalues in the step $2$ case. In higher steps, the adapted structure constants also have antisymmetries, but we do not pursue here their effect on the eigenvalue estimates of Popp extensions. The enhanced estimates in the step $2$ we achieved above will be used in Section~\ref{subsec:A remark on the definitions of quasiregularity in the Heisenberg groups} when we compare our constructions in the Heisenberg group. 
\end{remark}

The next proposition shows that the Popp measure behaves naturally under pullbacks. This means
$$
f^*P_h=P_{f^*h},
$$
where $f\colon (M,g)\to (N,h)$ is a sufficiently regular \emph{contact} mapping between equiregular subRiemannian manifolds. To be more precise, we can assume that, up to Proposition~\ref{pullbackofPopp}, that $f\in N^{1,Q}_{loc}(M,N)$ is a branched covering with $J_f>0$ a.e. in $M$. Note that under these assumptions, $f$ is contact by Remark~\ref{rmk:section 4} ii).

We include two different regularity assumption for the mapping in question in the next proposition to address a wider audience.
\begin{proposition}\label{pullbackofPopp}Let $M$ and $N$ be two equiregular subRiemannian manifolds with $N$ equipped with a horizontal metric $h$, and let $P_h$ be the Popp volume form on $(N,h)$. Assume that the mapping $f: M\to (N,h)$ is a $C^1$ contact diffeomorphism. Then
	$$
	f^*P_h=P_{f^*h}.
	$$
More generally, if $f\in N^{1,Q}_{loc}(M,N)$ is a branched covering with $J_f>0$ a.e. in $M$, then the above equation holds in the following weaker sense:
$$
\int_Bf^*P_h=\int_BP_{f^*h}
$$
for all balls $B\subset M$.
\end{proposition}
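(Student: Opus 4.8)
\emph{Smooth case.} I would first prove $f^*P_h=P_{f^*h}$ by a direct computation in adapted frames, using the explicit formula for the Popp extension established above (recall that, in an adapted frame $X$ with dual coframe $(\omega^1,\dots,\omega^n)$, one has $P_g=\sqrt{\det\overline g}\;\omega^1\wedge\cdots\wedge\omega^n$). Fix $p\in M$, set $\bar p=f(p)$, and choose an adapted frame $Y=(Y_1,\dots,Y_n)$ for $(N,\mathcal{D}_N)$ near $\bar p$, with dual coframe $(\eta^1,\dots,\eta^n)$. Since $f$ is a contact diffeomorphism, $df$ maps $\mathcal{D}_M$ isomorphically onto $\mathcal{D}_N$, and by naturality of the Lie bracket, $f_*[V,W]=[f_*V,f_*W]$, it maps the whole flag, $df(\mathcal{D}_M^s)=\mathcal{D}_N^s$; in particular $M$ and $N$ have the same growth vector and $X=(X_1,\dots,X_n)$ with $X_i:=(df)^{-1}(Y_i\circ f)$ is an adapted frame for $(M,\mathcal{D}_M)$ near $p$, with dual coframe $\omega^i=f^*\eta^i$. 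By multilinearity of the nested bracket and $X_i=f^*Y_i$ one gets $[X_{i_1},[\dots,X_{i_s}]]=f^*\bigl([Y_{i_1},[\dots,Y_{i_s}]]\bigr)$, so evaluating $\omega^\alpha=f^*\eta^\alpha$ the adapted structure constants satisfy $\tilde b^\alpha_{i_1\cdots i_s}=b^\alpha_{i_1\cdots i_s}\circ f$; and $(f^*h)(X_i,X_j)=h(Y_i,Y_j)\circ f$ gives $(f^*h)^{ij}=h^{ij}\circ f$ for the inverse horizontal metrics. Substituting these into the Local Popp extension formula, the $s$-th block obeys $(\overline{f^*h})_s^{\alpha\beta}=(\overline h)_s^{\alpha\beta}\circ f$, so $\overline{f^*h}=\overline h\circ f$ blockwise and $\det\overline{f^*h}=(\det\overline h)\circ f$. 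Since $P_h=\sqrt{\det\overline h}\;\eta^1\wedge\cdots\wedge\eta^n$, $P_{f^*h}=\sqrt{\det\overline{f^*h}}\;\omega^1\wedge\cdots\wedge\omega^n$, and $f^*(\eta^1\wedge\cdots\wedge\eta^n)=\omega^1\wedge\cdots\wedge\omega^n$, the two densities agree on the chart; as $p$ is arbitrary, $f^*P_h=P_{f^*h}$.

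\emph{Sobolev case: reduction.} Now let $f\in N^{1,Q}_{loc}(M,N)$ be a branched covering with $J_f>0$ a.e. By Remark~\ref{rmk:section 4}(ii), $f$ is $P$-differentiable a.e., contact, and $Df(p)\colon\mathbb{G}_p\to\mathbb{G}_{f(p)}$ is a Carnot group isomorphism with $Df(p)|_{V_1}=f_*|_{\mathcal{D}_p}$; in particular $f^*h$ is, a.e., an honest horizontal metric on $M$. Writing $g$ for the given horizontal metric on $M$ and $P_g=\Vol_M$, the Popp volume $P_{f^*h}$ is the Borel measure $\rho\,P_g$, where in any adapted frame $\rho=\sqrt{\det(\overline g^{-1}\overline{f^*h})}$ (a measurable, a.e.\ positive function which, once identified with $J_f$, is locally integrable since $\int_B J_f\,dP_g=f^*\Vol_N(B)<\infty$ for bounded $B$). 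On the other hand, interpreting $f^*P_h$ as the measure $f^*\Vol_N$ of~\eqref{eq:definition of pull-back}, the definition~\eqref{eq:analytic jacobian} of the volume Jacobian says $f^*P_h=J_f\,P_g$. Hence the proposition reduces to the pointwise identity $\rho(p)=J_f(p)$ for a.e.\ $p$.

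\emph{The pointwise identity.} This I would prove by freezing at $p$. By the explicit Popp extension formula, $\rho(p)$ depends only on $g|_p$, $(f^*h)|_p$, and the adapted structure constants at $p$; these coincide with the structure constants of the nilpotentization $\mathbb{G}_p$ in the frame $(X_j^{p,0})$, so $\rho(p)$ equals the constant ratio $dP_{(f^*h)|_p}/dP_{g|_p}$ of the left-invariant Popp volumes on $\mathbb{G}_p$ attached to the inner products $g|_p$ and $(f^*h)|_p$ on $V_1(\mathbb{G}_p)$. Since $Df(p)\colon\mathbb{G}_p\to\mathbb{G}_{f(p)}$ is a $C^\infty$ contact group isomorphism and $Df(p)|_{V_1}=f_*|_{\mathcal{D}_p}$, we have $(f^*h)|_p=(Df(p))^*\bigl(h|_{f(p)}\bigr)$, so the smooth case applied to $Df(p)$ gives $P_{(f^*h)|_p}=(Df(p))^*P_{h|_{f(p)}}$. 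Therefore $\rho(p)=d\bigl((Df(p))^*P_{h|_{f(p)}}\bigr)/dP_{g|_p}=J_{Df(p)}(0)$, and $J_{Df(p)}(0)=J_f(p)$ by Proposition~\ref{prop:Jacobian=det Df}. Thus $\rho=J_f$ a.e., $P_{f^*h}=f^*P_h$ as measures, and in particular $\int_B P_{f^*h}=\int_B f^*P_h$ for every ball $B$. The delicate step is this last one: justifying that the Popp-volume density $\rho(p)$ of the merely measurable metric $f^*h$ may be computed in the nilpotentized picture and there matches the Jacobian of the $P$-differential; this rests on the purely pointwise, algebraic nature of the Popp extension formula and on the compatibility of adapted structure constants with nilpotentization, bridged to $J_f$ via Proposition~\ref{prop:Jacobian=det Df}. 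The computations in the smooth case are routine once naturality of the Lie bracket is invoked, and the a.e.\ non-degeneracy of $f^*h$ is immediate from $J_f>0$ a.e.
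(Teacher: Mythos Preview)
Your proof is correct and takes a somewhat different route from the paper in both parts.

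In the smooth case the paper fixes two \emph{independent} adapted frames $X$ on $M$ and $Y$ on $N$ (the latter chosen so that its dual coframe is $\overline{h}$-orthonormal), represents $f_*$ by a lower-block-triangular matrix $T$, and then verifies via Lemmas~\ref{forvol1} and~\ref{forvol2} that $f^*P_h$ is a wedge of an $\overline{f^*h}$-orthonormal coframe. You instead pull back the frame, setting $X_i=(df)^{-1}(Y_i\circ f)$, so that $T$ becomes the identity: the adapted structure constants and the horizontal metric components then simply pull back as scalar functions, and $\overline{f^*h}=\overline{h}\circ f$ follows immediately from the explicit Popp-extension formula. This is the same content with less bookkeeping; the layer-preservation fact you invoke is precisely Lemma~\ref{forvol1}.

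For the Sobolev case the paper is terse, asserting that the pointwise computations of the smooth proof carry over a.e.\ once Remark~\ref{rmk:section 4}(ii) is in hand. Your argument is more explicit and runs in a different logical order: you reduce the weak identity to the pointwise density equality $\sqrt{\det(\overline{g}^{-1}\overline{f^*h})}=J_f$ a.e., and prove the latter by freezing at $p$, applying the smooth case to the group isomorphism $Df(p)\colon\mathbb{G}_p\to\mathbb{G}_{f(p)}$, and invoking Proposition~\ref{prop:Jacobian=det Df}. In effect you prove Lemma~\ref{lemma:coincidence of two Jacobian} first and derive the proposition from it, whereas the paper does the reverse. Both orderings are valid; your version has the advantage of making the ``delicate step'' (matching the pointwise Popp density with the tangent-cone Jacobian) visible rather than hidden in the phrase ``follows directly''.
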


This result can essentially be read from~\cite[Lemma 2.3]{br13}, but we include it here for completeness. Its proof only uses two facts. These are the assumption that the mapping in question maps layers $\mD_M^s$ of $M$ to layers $\mD^s_N$ of $N$ and that the Popp volume form (the Popp measure) induced by the Popp extension is independent of the adapted frame used in the construction. The proof is somewhat lengthy and therefore we record first couple lemmas that will streamline our presentation.
\begin{lemma}\label{forvol1}
	Under the assumptions on the mapping $f$ in Proposition~\ref{pullbackofPopp} the mapping $f$ preserves layers:
	$$
	f_* : \mD_M^s \to \mD_N^s, \quad s=1,\ldots,m.
	$$
	Consequently,
	$$
	f^* : (\mD_N^s)^*\to (\mD_M^s)^*, \quad s=1,\ldots,m.
	$$
\end{lemma}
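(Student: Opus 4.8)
The plan is to deduce Lemma~\ref{forvol1} directly from the contact hypothesis on $f$ together with the stability of the flag of distributions under pushforward by a diffeomorphism (or, in the Sobolev case, under the $P$-differential). First I would treat the $C^1$ contact diffeomorphism case. By definition of a contact mapping, $f_*\colon \mD_M^1\to\mD_N^1$, i.e. $f_*\mD_M=\mD_N$. The key observation is that the flag $\mD^{s+1}=\mD^s+[\mD^1,\mD^s]$ is constructed purely from the horizontal distribution via iterated Lie brackets, and pushforward by a diffeomorphism is a Lie algebra homomorphism on vector fields: $f_*[X,Z]=[f_*X,f_*Z]$. Hence by induction on $s$, assuming $f_*\mD_M^s=\mD_N^s$, any section of $\mD_N^{s+1}$ is locally a combination of sections of $\mD_N^s$ and brackets $[Y,W]$ with $Y\in\Gamma(\mD_N^1)$, $W\in\Gamma(\mD_N^s)$; pulling back under $f^{-1}$ and using the induction hypothesis together with $f_*\mD_M=\mD_N$ shows each such generator lies in $f_*\mD_M^{s+1}$, and conversely. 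Since $f$ is a diffeomorphism the ranks $k_s$ agree on both sides (equiregularity), so the inclusion is an equality of subbundles of equal rank. This gives $f_*\colon\mD_M^s\to\mD_N^s$ for all $s$, and dualizing yields $f^*\colon(\mD_N^s)^*\to(\mD_M^s)^*$.

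Next I would handle the weaker regularity case $f\in N^{1,Q}_{loc}(M,N)$, a branched covering with $J_f>0$ a.e. Here one cannot push forward vector fields pointwise in general, but by Remark~\ref{rmk:section 4}~ii), $f$ is $P$-differentiable at a.e.\ $p\in M$ and the $P$-differential $Df(p)\colon\G_p\to\G_{f(p)}$ is a Carnot group isomorphism. A Carnot group isomorphism is by definition a graded Lie algebra isomorphism, hence it maps the $s$-th layer of $\G_p$ isomorphically onto the $s$-th layer of $\G_{f(p)}$. Since the $s$-th layer of the nilpotentization at $p$ is canonically identified with $\mD_p^s/\mD_p^{s-1}$ (and the horizontal layer with $\mD_p$), this means that the induced map on $gr(\mD)$ respects the grading at a.e.\ point. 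Because the statement ``$f_*\colon\mD_M^s\to\mD_N^s$'' is itself an a.e.\ statement about a branched covering (compare the way the contact condition is phrased in Definition~\ref{def:contact map}), this is exactly what is needed; the dual statement follows formally.

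The main obstacle is being careful about what ``$f_*\colon\mD_M^s\to\mD_N^s$'' means in the low-regularity setting, since $f$ is only $P$-differentiable a.e. and one must argue at the level of the $P$-differential / nilpotentization rather than with genuine pushforwards of vector fields. The cleanest route is to reduce everything to the already-established fact (Remark~\ref{rmk:section 4}~ii)) that $Df(p)$ is a Carnot group isomorphism a.e., and then simply invoke that a graded Lie algebra isomorphism preserves each graded summand; the inductive bracket argument is really only needed to justify the $C^1$ case transparently, and in the Sobolev case it is subsumed by the structure of the $P$-differential. I would therefore present the $C^1$ case via the explicit bracket induction and dispatch the Sobolev case in one or two sentences by citing Remark~\ref{rmk:section 4}~ii) and the definition of a graded Lie algebra homomorphism.
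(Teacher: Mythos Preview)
Your proposal is correct and follows essentially the same approach as the paper: the paper also argues by induction on $s$, using that $f$ is contact for $s=1$ and the identity $f_*[X_1,X_2]=[f_*X_1,f_*X_2]$ for the inductive step, then concludes the dual statement by duality. In fact you are more careful than the paper about the low-regularity case: the paper simply invokes ``since the chain rule holds for $f$'' to justify the bracket identity in both the $C^1$ and the $N^{1,Q}_{loc}$ settings, whereas you separate the two cases and in the Sobolev case appeal directly to Remark~\ref{rmk:section 4}~ii) and the fact that the $P$-differential is a graded Lie algebra isomorphism a.e.
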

\begin{proof}
	For $s=1$, this is clear since $f$ is contact. Let then $s=2$ and let $Y\in \mD_M^2$. Thus $Y=[X_1,X_2]+W$, where $X_1,X_2,W\in \mD_M^1$. Since the chain rule holds for $f$, we have by the standard rule of pushforward of the Lie bracket that
	$$
	f_*Y=[f_*X_1,f_*X_2]+f_*W.
	$$
	Since $f$ is contact, we have that $f_*Y\in \mD_N^2$. For general $s=1,\ldots,m$ the claim follows by induction. The latter claim follows by duality.
\end{proof}

\begin{lemma}\label{forvol2}
	Let $p\in M$. Let $X=(X_1,\ldots,X_n)$ and $Y=(Y_1,\ldots, Y_n)$ be adapted frames on neighborhoods of $p$ and $f(p)$, and let $(\omega^1,\ldots,\omega^n)$ and $(\kappa^1,\ldots,\kappa^n)$ be their dual frames. Let 
	$$
	b_{i_1i_2\cdots i_s}^\a \mbox{ and } b_{i'_1i'_2\cdots i'_s}^{\a'}
	$$
	be the corresponding adapted structure constants as before. (Recall that the primed indices are used to indicate quantities with respect to the adapted frame $Y$ near $f(p)$).
	
	Let $f$ be as in Proposition~\ref{pullbackofPopp}. Then with respect to these frames, $f_*$ is a lower block diagonal matrix field we denote by $T$. Let us now denote the diagonal blocks of $T$ by $T_s$, $s=1,\ldots,m$. We set $T_1=T$ for simplicity as before. With respect to the given frames we then have
	$$
	b^{\a'}_{i'_1i'_2\cdots i'_s}=(T^{-T}_s)_{\ph \a}^{\a'} b^{\a}_{i_1 i_2\cdots i_s} \,  T^{\ph i_1}_{i'_1}T^{\ph i_2}_{i'_2}\cdots T^{\ph i_s}_{i'_s}.
	$$
\end{lemma}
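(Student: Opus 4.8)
The plan is to reduce the statement to the change‑of‑adapted‑frame computation already carried out inside the proof of Proposition~\ref{local_Popp} (the formula \eqref{b_trans} and the lines following it), the only genuinely new ingredient being that $f$ is ``invertible enough'' to transport adapted frames across it. First I would pin down the regularity: if $f$ is a $C^1$ contact diffeomorphism this is immediate, while if $f\in N^{1,Q}_{loc}(M,N)$ is a branched covering with $J_f>0$ a.e., then by Remark~\ref{rmk:section 4} ii) the mapping $f$ is contact, $f_*\colon\mD_M\to\mD_N$ is injective a.e., and the chain rule holds for $f$ (cf.\ Section~\ref{subsec:Sobolev spaces}); hence $f$ is a local diffeomorphism at a.e.\ point and the claimed identity is to be proved $\Vol_M$-a.e. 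By Lemma~\ref{forvol1}, $f$ preserves the flag, so the push‑forward $f_*X=(f_*X_1,\dots,f_*X_n)$ is again an adapted frame near $f(p)$: its $s$-th layer is $f_*\mD_M^s$, which $f_*$ carries isomorphically onto $\mD_N^s$ for dimension reasons, so the classes of $(f_*X)_{k_{s-1}+1},\dots,(f_*X)_{k_s}$ form a basis of $\mD_N^s/\mD_N^{s-1}$. In the frames $X$ and $Y$ the matrix $T$ of $f_*$ is then lower block triangular (as in Proposition~\ref{local_Popp}), with diagonal blocks $T_s\colon\mD_M^s/\mD_M^{s-1}\to\mD_N^s/\mD_N^{s-1}$ and $T_1=T$.

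The key step is the naturality of the Lie bracket. Because the chain rule holds for $f$, one has $f_*[X_{i_1},[X_{i_2},\dots,[X_{i_{s-1}},X_{i_s}]]]=[f_*X_{i_1},[f_*X_{i_2},\dots,[f_*X_{i_{s-1}},f_*X_{i_s}]]]$. Applying $f_*$ to the defining relation $[X_{i_1},[\dots,[X_{i_{s-1}},X_{i_s}]]]=b^\a_{i_1\cdots i_s}X_\a \bmod \mD_M^{s-1}$ and using $f_*\mD_M^{s-1}\subset\mD_N^{s-1}$ from Lemma~\ref{forvol1} yields $[f_*X_{i_1},[\dots]]\equiv b^\a_{i_1\cdots i_s}(f_*X)_\a \pmod{\mD_N^{s-1}}$. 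Since the $(f_*X)_\a$, $\a$ in layer $s$, are a basis of $\mD_N^s/\mD_N^{s-1}$, this says precisely that the adapted structure constants of the frame $f_*X$ on $N$ coincide with the adapted structure constants $b^\a_{i_1\cdots i_s}$ of $X$ on $M$. (Equivalently, one may pull $Y$ back by $f^{-1}$ to an adapted frame on $M$ whose structure constants equal the $b^{\a'}_{i'_1\cdots i'_s}$ of $Y$ on $N$; either phrasing works.)

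It then remains to compare the two adapted frames $Y$ and $f_*X$ on the single manifold $N$, which are related by the block‑triangular matrix $T$ (with horizontal block $T$). This is exactly the situation treated in \eqref{b_trans}: that computation used only that passing to dual frames is contragredient, so that the relevant covector of $Y$ evaluated on $(f_*X)_\a$ reproduces a block of $T^{-T}$ on the $s$-th layer, and that when a nested bracket of the $Y$-frame vectors is expanded in terms of the $f_*X$-frame, the Leibniz correction terms carry one fewer bracket and hence lie in $\mD_N^{s-1}$, so are annihilated by the covectors dual to layer-$s$ directions. Feeding the change of adapted frame from $f_*X$ to $Y$ into \eqref{b_trans}, and using the previous paragraph to replace the structure constants of $f_*X$ by the $b^\a_{i_1\cdots i_s}$ of $X$, one obtains $b^{\a'}_{i'_1i'_2\cdots i'_s}=(T^{-T}_s)_{\ph \a}^{\a'} b^{\a}_{i_1 i_2\cdots i_s}\,T^{\ph i_1}_{i'_1}T^{\ph i_2}_{i'_2}\cdots T^{\ph i_s}_{i'_s}$, which is the assertion.

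The main obstacle I anticipate is not the algebra — which is a verbatim rerun of \eqref{b_trans} — but the bookkeeping around transporting frames through $f$. One must check that $f_*X$ really is adapted, equivalently that each block $T_s$ is invertible; this is automatic for a diffeomorphism, but in the $N^{1,Q}_{loc}$ branched‑covering case it rests on Remark~\ref{rmk:section 4} ii), and there the identity holds only $\Vol_M$-a.e. The second delicate point is keeping the transpose/inverse conventions for $T$ consistent with those fixed in Proposition~\ref{local_Popp}, so that the formula comes out in exactly the displayed form rather than an equivalent conjugate of it.
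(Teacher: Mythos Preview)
Your proposal is correct and follows essentially the same approach as the paper: the paper's proof simply notes that Lemma~\ref{forvol1} makes $T$ lower block triangular and then says ``a similar calculation to the one in the proof of Proposition~\ref{local_Popp} yields the claim.'' Your write-up is a careful unpacking of exactly that sentence, making explicit the intermediate step that $f_*X$ is an adapted frame on $N$ with the same adapted structure constants as $X$, after which the change-of-frame formula~\eqref{b_trans} applies verbatim.
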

\begin{proof}
	By the previous lemma $f_* : \mD_M^s \to \mD_N^s$, $s=1,\ldots,m$, and thus $T$, the representation of $f_*$ with respect to frames $X$ and $Y$, is lower block diagonal. Now a similar calculation to the one in the proof of Proposition~\ref{local_Popp} yields the claim.
\end{proof}

We are now set for the proof of the proposition.

\begin{proof}[Proof of Proposition~\ref{pullbackofPopp}]
	We will only prove the proposition for $f$ being a $C^1$-contact diffeomorphism. The latter claim in Proposition~\ref{pullbackofPopp} follows directly from the proof of the $C^1$-smooth case upon noticing Remark~\ref{rmk:section 4} ii).
	
	The Popp volume form on $(N,h)$ is constructed by orthonormalizing a local adapted coframe by using the Popp extension of the horizontal metric $h$. 
	%construct the Popp volume form on $(N,h)$ first chooses a local adapted frame, then constructs the Popp extension with respect to this frame using the horizontal metric $h$ and lastly orthonormalizing this frame by using the Popp extension, and wedging the elements dual to that, produces the Popp volume form locally. 
	The construction is independent of the chosen local (co)adapted frame yielding the Popp volume form globally on $N$, see e.g.~\cite{br13, m02}. 
	
	Let $p\in M$ and let $X$ be an adapted frame near $p$ and let $\omega$ be its dual frame. Let $\theta$ by the orthonormal adapted coframe that defines the Popp volume form near $f(p)$, and let $Y$ by its adapted dual frame. This particular choice of the frames is just to simplify the needed calculations below.
	%Let $\omega$ be an adapted coframe dual to $X$ and let $\theta$ be an orthonormal adapted coframe with respect to $\ol{h}$. %, constructed with respect to $Y$ and $h$. 
	The Popp volume form $P_h$ reads
	%$$
	%f^*P_h=f^*\theta^1\wedge\cdots\wedgef^*\theta^k\wedgef^*\theta^{k+1}\wedge\cdots\wedgef^*\theta^{k_l}\wedgef^*\theta^{k_l+1}\cdots\wedgef^*\theta^n.
	%$$
	$$
	P_h=\theta^1\wedge\cdots\wedge\theta^k\wedge\cdots\wedge \theta^n.
	$$
	We need to show that the pull back of the Popp volume form by $f^*$ is a wedge product of an $\ol{f^*h}$-orthonormal coframe.
	
	%right hand side is a wedge product of $\ol{f^*h}$-orthonormal coframe. We set the following notation. 
	We divide the coframe $\omega$ into parts by setting
	$$
	\omega(s)=(\omega^{k_{s-1}+1},\ldots \omega^{k_s}), \quad s=1,\ldots, m.
	$$
	Moreover, we denote
	$$
	\wedge\omega(s)=\omega^{k_{s-1}+1}\wedge\ldots \wedge\omega^{k_s}.
	$$
	Let us present the pushforward $f_*$ by a matrix field $T$ defined with respect to the adapted frames $X$ and $Y$. 
	%the frame dual to $\theta$ . 
	
	By the Lemma~\ref{forvol1} we have that $f_*$ preserves layers and thus $T$ is lower block triangular with respect to $X$ and $Y$. Denote the diagonal blocks of $T$ by $T_s$ and note that the determinant of $T$ is the product of the determinants of the blocks $T_s$. By the standard rules of wedge products we have 
	\begin{align*}
	f^*P_h&=\det(T)\omega^1\wedge\cdots\wedge\omega^n=\det(T_1)\cdots \det(T_m)\omega^1\wedge\cdots\wedge\omega^n\\
	&=(\det(T_1)\wedge\omega(1))\wedge(\det(T_2)\wedge\omega(2))\wedge\cdots\wedge(\det(T_m)\wedge\omega(m)).
	\end{align*}
	Furthermore, we have
	$$
	\det(T_s)\wedge\omega(s)=\det(T_s)\omega^{k_{s-1}+1}\wedge\ldots \wedge\omega^{k_s}=T_s^T\theta^{k_{s-1}+1}\wedge T_s^T\theta^{k_{s-1}+2}\wedge\cdots\wedge T_s^T\theta^{k_s}.
	$$
	
	Let $s=1,\ldots,m$. We need to show that for any $\a',\b'\in \{k_{s-1}+1,\ldots,k_s\}$ the inner product of $T_s^T\theta^{\a'}$ and $T_s^T\theta^{\b'}$ is $\delta^{\a'\b'}$. We have
	$$
	(T_s^T)\theta^{\a'}=(T_s^T)^{\a'}_{\ph \a} \omega^{\a},
	$$
	where the summation is over $\a\in\{k_{s-1}+1,\ldots,k_s\}$ as usual. The inner product for fixed $\a'$ and $\b'$ is
	$$
	\ol{f^*h}\left((T^T_s)\theta^{\a'},(T^T_s)\theta^{\b'}\right)=(T_s^T)^{\a'}_{\ph \a} (T_s^T)^{\b'}_{\ph \b} \left\langle \omega^{\a}, b_M^T A
	_{f^*h} b_M \omega^{\b}\right\rangle.
	$$
	Now, we have by definition
	$$
	\left\langle \omega^{\a}, b_M^T A
	_{f^*h} b_M \omega^{\b}\right\rangle=b^{\a}_{i_1\cdots i_s}(f^*h)^{i_1j_1}\cdots(f^*h)^{i_sj_s}b^{\b}_{j_1\cdots j_s}.
	$$
	Since the horizontal metric transforms tensorially, we have
	$$
	(f^*h)^{i_lj_l}=(T^{-1})^{i_l}_{\ph i'_l}(T^{-1})_{\ph j'_l}^{ j_l}h^{i'_lj'_l}, \quad l=1,\ldots,s.
	$$
	Now, by Lemma~\ref{forvol2}, we have
	$$
	b^{\a'}_{i'_1i'_2\cdots i'_s}=(T^{-T}_s)_{\ph \a}^{\a'} b^{\a}_{i_1 i_2\cdots i_s} \,  T^{\ph i_1}_{i'_1}T^{\ph i_2}_{i'_2}\cdots T^{\ph i_s}_{i'_s}
	$$
	%$$
	%b^{\a}_{i'_1\cdots i'_s}=(T_s^{T})^{\a}_{\ph \a'}b^{\a'}_{i'_1\cdots i'_s}.
	%$$
	Plugging all these in, we have
	\begin{align*}
	\ol{f^*h}\left((T_s)\theta^{\a'},(T_s)\theta^{\b'}\right)&=(T_s^T)^{\a'}_{\ph \a} (T_s^T)^{\b'}_{\ph \b}b^{\a}_{i_1\cdots i_s}(f^*h)^{i_1j_1}\cdots(f^*h)^{i_sj_s}b^{\b}_{j_1\cdots j_s} \\ 
	&=b^{\a'}_{i'_1\cdots i'_s}h^{i'_1j'_1}\cdots h^{i'_sj'_s}b^{\b'}_{j'_1\cdots j'_s}=\ol{h}(\theta^{\a'},\theta^{b'}) \\
	&=\delta^{\a'\b'}
	\end{align*}
	as requested.
\end{proof}

\begin{remark}\label{rmk:Riemannian volume of Popp extension=Popp volume of horizontal}
	The Popp volume form $P_g$ of a horizontal metric $g$ is locally the Riemannian volume form $dV_{\ol{g}}$ of a local Popp extension $\ol{g}$ of $g$. We consider here $\ol{g}$ as a local Riemannian metric. This fact does not depend on the adapted basis used for the construction of the local Popp extension. 
	
	To see this, let $(E_1,E_2,\ldots,E_n)$ be any $\ol{g}$-orthonormal basis, then the Riemannian volume form is defined by the requirement
	$$
	dV_{\ol{g}}(E_1,E_2,\ldots,E_n)=1
	$$
	for every local oriented frame.
	See e.g.~\cite{l97}. If we take $(E_1,E_2,\ldots,E_n)$ to be an adapted frame that is $\ol{g}$-orthonormal, then 
	$$
	P_g(E_1,E_2,\ldots,E_n)=1.
	$$
	Changing to another $\ol{g}$-orthonormal frame, with same orientation, amounts multiplying by the determinant of the change of basis matrix, which of course is just $1$. 
	%sinecsince the manifold is oriented and the frames are both oriented and orthonormal. 
	Thus 
	$$
	P_g=dV_{\ol{g}}.
	$$
\end{remark}

In Section~\ref{sec:Quasiregular mappings}, the volume Jacobian of $f$ was defined to be the volume derivative of the pullback measure $f^*\Vol_N$ with respect to the original measure $\Vol_M$ on $M$. Alternatively, we can also think of the Jacobian as the representation function of the pullback $n$-form $f^*P_h$ with the $n$-form $P_g$ on $M$. We denote by $J(\cdot,f)$ this representation function, i.e., $J(x,f)P_g(x)=f^*P_h(x)$. 
\begin{lemma}\label{lemma:coincidence of two Jacobian}
	Let $f:(M,g)\to (N,h)$ be a branched covering in $N^{1,Q}_{loc}(M,N)$ such that $J_f>0$ a.e. in $M$. Then $J_f=(\det(\ol{g}^{-1}\ol{f^*h}))^{1/2}$ a.e. in $M$.
\end{lemma}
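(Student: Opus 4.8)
The plan is to reduce the identity, at $\Vol_M$-a.e.\ point, to the $P$-differential of $f$ and then to a one-line linear-algebra computation of the Jacobian of a Carnot group isomorphism expressed through its Popp extension. First I would record the regularity: since $f\in N^{1,Q}_{loc}(M,N)$ is a branched covering with $J_f>0$ a.e., Remark~\ref{rmk:section 4}~ii) gives that $f$ is $P$-differentiable at a.e.\ $p\in M$, is a contact mapping, and for a.e.\ such $p$ the $P$-differential $A:=Df(p)\colon\G_p\to\G_{f(p)}$ is a Carnot group isomorphism. Fixing such a $p$, Proposition~\ref{prop:Jacobian=det Df} yields $J_f(p)=J_{Df(p)}(0)$, so it is enough to show $J_{Df(p)}(0)=\bigl(\det(\ol{g}^{-1}\ol{f^*h})\bigr)^{1/2}(p)$.

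Next I would compute $J_{Df(p)}(0)$, which is the factor by which the linear isomorphism $A$ rescales the Popp measures of the tangent Carnot groups $\G_p$ and $\G_{f(p)}$. By Remark~\ref{rmk:Riemannian volume of Popp extension=Popp volume of horizontal} these Popp measures are exactly the Riemannian volumes $dV_{\ol g}$ at $p$ and $dV_{\ol h}$ at $f(p)$ of the (constant-coefficient) Popp extensions, so in any adapted frame $J_{Df(p)}(0)=\bigl(\det(\ol g^{-1}A^{T}\ol h\,A)\bigr)^{1/2}$, where $A^{T}\ol h\,A$ is the pullback of $\ol h$ by $A$. Since $f$ is contact and preserves layers (Lemma~\ref{forvol1}), $A$ is lower block triangular with horizontal block $T_1=Df(p)|_{\mD_p}$, and $(f^*h)_p=(Df(p)|_{\mD_p})^{*}h_{f(p)}$; as $\ol h$ is block diagonal, $\det(A^{T}\ol h\,A)=(\det A)^{2}\det\ol h$. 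On the other hand, rewriting the identity $f^*P_h=P_{f^*h}$ of Proposition~\ref{pullbackofPopp} via Remark~\ref{rmk:Riemannian volume of Popp extension=Popp volume of horizontal} as $f^*(dV_{\ol h})=dV_{\ol{f^*h}}$ is, in coordinates, precisely the statement $\det\ol{f^*h}=(\det Df(p))^{2}\det\ol h=\det(A^{T}\ol h\,A)$. Hence $\det(\ol g^{-1}A^{T}\ol h\,A)=\det\bigl(\ol{g}^{-1}\ol{f^*h}\bigr)(p)$, and combining with the previous paragraph gives $J_f(p)=\bigl(\det(\ol g^{-1}\ol{f^*h})\bigr)^{1/2}(p)$ for a.e.\ $p$.

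A more transparent variant argues directly with the representation function $J(\cdot,f)$: the weak form of Proposition~\ref{pullbackofPopp} forces $f^*P_h=P_{f^*h}$ $\Vol_M$-a.e., whence $J(x,f)\,dV_{\ol g}=J(x,f)P_g=f^*P_h=P_{f^*h}=dV_{\ol{f^*h}}$ a.e.\ by Remark~\ref{rmk:Riemannian volume of Popp extension=Popp volume of horizontal}, so $J(x,f)=dV_{\ol{f^*h}}/dV_{\ol g}=\bigl(\det(\ol g^{-1}\ol{f^*h})\bigr)^{1/2}$; one then only needs $J_f=J(\cdot,f)$ a.e., which follows from the change of variables formula for $f$ (cf.\ the discussion around~\eqref{eq:another description of Jacobian}) or again from $J_f(p)=J_{Df(p)}(0)=J(p,f)$ at points of $P$-differentiability. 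In either route the step that requires the most care is the determinant identity $\det\ol{f^*h}=\det(A^{T}\ol h\,A)$ (equivalently $J_f=J(\cdot,f)$): one must be sure that the Popp extension $\ol{f^*h}$ — which is assembled from $(f^*h)_p$ together with the bracket structure of $M$, not of $N$ — nevertheless correctly records the Jacobian of the ordinary differential of $f$, and this is exactly where the layer-preservation of $f$ and the frame-independence of the Popp construction (Propositions~\ref{local_Popp}, \ref{evs_inv} and \ref{pullbackofPopp}) enter, in the same way as in the proof of Proposition~\ref{pullbackofPopp}.
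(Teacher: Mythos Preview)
Your second, ``more transparent'' variant is essentially the paper's proof. The paper is even terser: it simply writes
\[
J_f(x_0)=\frac{d(f^*\Vol_N)}{d\Vol_M}(x_0)=\lim_{r\to 0}\frac{\int_{B(x_0,r)}f^*P_{h}}{\int_{B(x_0,r)}P_g}=\lim_{r\to 0}\frac{\dashint_{B(x_0,r)}P_{f^*h}}{\dashint_{B(x_0,r)}P_g}=\bigl(\det(\ol{g}^{-1}\ol{f^*h})(x_0)\bigr)^{1/2},
\]
invoking the weak identity $\int_B f^*P_h=\int_B P_{f^*h}$ from Proposition~\ref{pullbackofPopp} and the Lebesgue differentiation theorem. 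In particular, the paper does not isolate a separate step ``$J_f=J(\cdot,f)$'': the Radon--Nikodym derivative is computed directly as a limit of ratios of integrals over shrinking balls, and the substitution is made at the level of those integrals rather than pointwise.

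Your first route through Proposition~\ref{prop:Jacobian=det Df} and the $P$-differential is a correct but more circuitous alternative. The determinant identity $\det\ol{f^*h}=\det(A^{T}\ol h\,A)$ that you single out as the delicate point is exactly the content of Proposition~\ref{pullbackofPopp} (read through Remark~\ref{rmk:Riemannian volume of Popp extension=Popp volume of horizontal}), so once you invoke that proposition the passage through the tangent Carnot groups and the linear map $A$ gains nothing over the direct Lebesgue-differentiation argument. The advantage of the paper's route is brevity; the advantage of yours is that it makes explicit the two ingredients (area formula/identification $J_f=J(\cdot,f)$, and the Popp-volume compatibility $f^*P_h=P_{f^*h}$) that the paper's chain of equalities uses implicitly.
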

\begin{proof}
	By Proposition~\ref{pullbackofPopp}, we have that $f^*P_h=P_{f^*h}$ in the weak sense. It follows that
	\begin{align*}
		J_f(x_0)&=\frac{d(f^*\Vol_N)}{d\Vol_M}(x_0)=\lim_{r\to 0}\frac{\int_{B(x_0,r)}f^*P_{h}}{\int_{B(x_0,r)}P_g}\\
		&=\lim_{r\to 0}\frac{\dashint_{B(x_0,r)}P_{f^*h}}{\dashint_{B(x_0,r)}P_g}=\left(\det(\ol{g}^{-1}\ol{f^*h}(x_0))\right)^{1/2}
	\end{align*}
	for a.e. $x_0\in M$, by the Lebesgue differentiation theorem.
\end{proof}

\subsection{Equivalence of the horizontal and the subRiemannian quasiregularity}\label{sec:Equivalence of the horizontal and the subRiemannian quasiregularity}

We are now ready to define a distortion function $K^2(\ol{g},\ol{h})$ of two Popp extensions $\ol{g},\ol{h}$. Motivated by~\cite{l14}, and also~\cite{d99}, we set
\begin{equation}\label{distortion}
	K^2(\ol{g},\ol{h})=\frac{||g^{-1}h||^Q}{\det(\ol{g}^{-1}\ol{h})},
\end{equation}
where $||\cdot||$ is the sup-norm\footnote{ To avoid confusions with~\cite{l14}, we remark that the work~\cite{l14} uses the Hilbert-Schmidt -norm instead.}. Note that in the numerator of~\eqref{distortion} only the horizontal metrics are used. 

We introduce the \emph{subRiemannian definition} of quasiregularity.
\begin{definition}(SubRiemannian $K$-quasiregular mappings)\label{def:subRiemannian qr}
	Let $f\colon (M,g)\to(N,h)$ be a branched covering between equiregular subRiemannian manifolds $(M,g)$ and $(N,h)$. We say the mapping $f$ is a \textit{subRiemannian $K$-quasiregular mapping} if $f\in N^{1,Q}_{loc}(M,N)$, $J_f>0$ a.e. in $M$ and 
	$$
	||g^{-1}f^*h||^Q\leq K^2 \det(\ol{g}^{-1}\ol{f^*h}) \mbox{ a.e. in }M
	$$
	or equivalently
	$$
	K^2(\ol{g},\ol{f^*h})\leq K^2 \mbox{ a.e. in }M.
	$$
\end{definition}
We remark that the square of $K$ appearing in the definition is due to fact that we use pull back of the horizontal tensor $h$ is quadratic in the differential of $f$. See also~\cite{l14}. 

Here $\ol{f^*h}$ is the Popp extension of the pullback metric $f^*h$. The pullback $f^*h$ is well defined since by Proposition~\ref{prop:on differentiability}, the mapping $f$ is $P$-differentiable a.e. in $M$. The norm $||g^{-1}f^*h||$ is the sup-norm of $g^{-1}f^*h$. We remark that the horizontal $\binom{1}{1}$-tensor $g^{-1}f^*h$ is the same as 
$$
Df^*Df : (\mD_M,g) \to (\mD_N,h),
$$
where the adjoint $Df^*$ is defined as the adjoint of $Df$ between inner product spaces $(\mD_M,g)$ and $(\mD_N,h)$, cf.~\cite{l14}. We can also regard $g^{-1}f^*h$ as an element of $\mbox{End}(\mD_M)$ implying that the eigenvalue problem for $g^{-1}f^*h$ is well defined, i.e., independent of the choice of basis for $\mD_M$.
\begin{remark}\label{rmk:on subRiemannian quasiregularity}	
	The a priori assumption $J_f>0$ a.e. in $M$ is only used to ensure that $\ol{g}^{-1}\ol{f^*h}$ is well-defined. The reason is that we need the horizontal $2$-form $f^*h$ to be positive definite a.e. to construct $\ol{f^*h}$. Equivalently, we could assume that the pushforward $f_*:\mD_M\to\mD_N$ is injective a.e. in $M$. 
	%The assumption on the injectivity of $f_*$ is needed because the construction of the Popp extension uses the inverse of $f^*h$; compare with the Riemannian definition of quasiregular mappings~\cite{l14}.
\end{remark}

We define \emph{horizontal distortion} to be
$$
H^2(g,h)=\frac{||g^{-1}h||^k}{\det(g^{-1}h)}.
$$
Horizontally $K$-quasiregular mappings are defined as follows (compare it with~\cite{l14}).
\begin{definition}(Horizontally $K$-quasiregular mappings)\label{def:horizontal qr}
	Let $f\colon (M,g)\to(N,h)$ be a branched covering between equiregular subRiemannian manifolds $(M,g)$ and $(N,h)$.  We say that the mapping $f$ is \textit{horizontally $K$-quasiregular} if $f\in N^{1,Q}_{loc}(M,N)$ and it satisfies
	$$
	||g^{-1}f^*h||^k\leq K^2 \det(g^{-1}f^*h) \mbox{ a.e. in } M
	$$
	or equivalently
	$$
	H^2(g,f^*h)\leq K^2 \mbox{ a.e. in } M.
	$$
\end{definition}
We remark that if the horizontal distortion is equal to $1$ a.e., we have that
$$
f^*h=c g,
$$
for some a.e. positive function. This can be seen by an elementary argument using inequality of geometric and arithmetic means to eigenvalues of $g^{-1}f^*h$ (cf.~\cite{l14}).

The next proposition shows the interrelation between the introduced distortions. In particular it implies that, when either of the distortions is one, so is the other.

\begin{proposition}\label{prop:compare distortions}
	Let $g$ and $h$ be horizontal metrics on an equiregular subRiemannian manifold, and let $\ol{g}$ and $\ol{h}$ be their Popp extensions. Then
	\begin{align*}
		H^2(g,h)\leq K^2(\ol{g},\ol{h})\leq \big(H^2(g,h)\big)^{Q-1}.
	\end{align*}
	
\end{proposition}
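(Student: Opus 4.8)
The plan is to push everything down to the eigenvalues of the horizontal endomorphism $g^{-1}h$ and then exploit the block structure of the distortion matrix $\ol{g}^{-1}\ol{h}$ established in Proposition~\ref{eigenval_compr}. Since $g^{-1}h$ is self-adjoint and positive definite with respect to $g$, I would first diagonalize it, writing its eigenvalues as $0<\lambda_1\leq\cdots\leq\lambda_k$; by Proposition~\ref{evs_inv} (invariance of the eigenvalues of the distortion matrix under a change of adapted frame) together with the analogous, easier invariance of the eigenvalues of $g^{-1}h$, there is no loss in computing in a $g$-orthonormal frame of $\mD$ that diagonalizes $g^{-1}h$. Then the sup-norm is $\|g^{-1}h\|=\lambda_k$, so
\begin{equation*}
H^2(g,h)=\frac{\lambda_k^{\,k}}{\lambda_1\cdots\lambda_k},\qquad
K^2(\ol{g},\ol{h})=\frac{\lambda_k^{\,Q}}{\det(\ol{g}^{-1}\ol{h})},
\end{equation*}
and the whole proposition becomes a two-sided estimate for $\det(\ol{g}^{-1}\ol{h})$.

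The second step is to prove that estimate. By Proposition~\ref{eigenval_compr}, $\ol{g}^{-1}\ol{h}$ is block diagonal with blocks $g_1^{-1}h_1=g^{-1}h,\ g_2^{-1}h_2,\dots,g_m^{-1}h_m$, the $s$-th block having size $n_s$ and all of its eigenvalues lying in $[\lambda_1^{\,s},\lambda_k^{\,s}]$. Hence $\det(g_1^{-1}h_1)=\lambda_1\cdots\lambda_k$, and $\lambda_1^{\,s n_s}\leq\det(g_s^{-1}h_s)\leq\lambda_k^{\,s n_s}$ for $s\geq2$. Multiplying over $s$ and using the weighted count $\sum_{s=1}^{m}s\,n_s=Q$ together with $n_1=k$ (so that $\sum_{s\geq2}s\,n_s=Q-k$), I obtain
\begin{equation*}
(\lambda_1\cdots\lambda_k)\,\lambda_1^{\,Q-k}\ \leq\ \det(\ol{g}^{-1}\ol{h})\ \leq\ (\lambda_1\cdots\lambda_k)\,\lambda_k^{\,Q-k}.
\end{equation*}

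The third step plugs these in. The upper bound on the determinant gives at once
$K^2(\ol{g},\ol{h})\geq \lambda_k^{\,Q}/\big((\lambda_1\cdots\lambda_k)\lambda_k^{\,Q-k}\big)=\lambda_k^{\,k}/(\lambda_1\cdots\lambda_k)=H^2(g,h)$, which is the left inequality. For the right inequality, the lower bound on the determinant gives $K^2(\ol{g},\ol{h})\leq \lambda_k^{\,Q-1}/\big(\lambda_1^{\,Q-k+1}\lambda_2\cdots\lambda_{k-1}\big)$, and comparing this with $\big(H^2(g,h)\big)^{Q-1}=\lambda_k^{\,(k-1)(Q-1)}/(\lambda_1\cdots\lambda_{k-1})^{Q-1}$ reduces the claim, after clearing denominators and cancelling, to
\begin{equation*}
\lambda_1^{\,k-2}\,\lambda_2^{\,Q-2}\cdots\lambda_{k-1}^{\,Q-2}\ \leq\ \lambda_k^{\,(k-2)(Q-1)}.
\end{equation*}
This holds term by term: a rank-one bracket generating distribution forces $n=1$ and $Q=1$, so under the standing hypothesis $Q\geq2$ we have $k\geq2$, hence $k-2\geq0$ and $Q-2\geq0$; since each $\lambda_i\leq\lambda_k$ and the exponents add up as $(k-2)+(k-2)(Q-2)=(k-2)(Q-1)$, the displayed inequality follows, completing the plan.

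I expect the only non-routine point to be the sharpness of the determinant estimate: the cruder bound $\lambda_1^{\,Q-1}\lambda_k\leq\det(\ol{g}^{-1}\ol{h})\leq\lambda_1\lambda_k^{\,Q-1}$ already recorded in Proposition~\ref{eigenval_compr} is too weak here (it only yields $K^2\geq\lambda_k/\lambda_1$, which does not dominate $H^2$), so one must return to the block-by-block bounds and carefully carry the exponent $Q-k$ coming from $\sum_{s\geq2}s\,n_s$ — that exponent is precisely what makes the two inequalities close.
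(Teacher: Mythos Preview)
Your argument is correct and, for the left inequality $H^2\leq K^2$, essentially identical to the paper's: both bound $\det(\ol g^{-1}\ol h)$ from above by $(\lambda_1\cdots\lambda_k)\lambda_k^{Q-k}$ using $\mu_s\leq\lambda_k^{\,s}$ block by block.

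For the right inequality $K^2\leq (H^2)^{Q-1}$ you take a longer route than the paper. The paper simply uses the crude lower bound $\det(\ol g^{-1}\ol h)\geq\lambda_1^{\,Q-1}\lambda_k$ already recorded in~\eqref{lambas_and_det}, giving $K^2\leq(\lambda_k/\lambda_1)^{Q-1}$, and then notes the one-line estimate $\lambda_k/\lambda_1\leq H^2$ (immediate from $H^2=\prod_{i=1}^{k}\lambda_k/\lambda_i$). This bypasses your sharper block-wise lower bound and the subsequent exponent-matching computation entirely. Your closing remark is therefore only half right: the crude bound from~\eqref{lambas_and_det} is indeed too weak for the \emph{left} inequality, but it is exactly what the paper uses for the \emph{right} one. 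Your approach buys a slightly tighter intermediate estimate on $K^2$ at the cost of more algebra; the paper's buys brevity.
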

\begin{proof}
	Let us denote the eigenvalues of $g^{-1}h$ by $\lambda_1\leq \lambda_2\leq \cdots\leq \lambda_k$ arranged in increasing order, and let us denote the eigenvalues of $\ol{g}^{-1}\ol{h}$ by $\mu_1\leq \mu_2\leq\cdots \leq\mu_n$ also arranged in increasing order. 
	
	We first prove the latter inequality. Note that we have
	$$
	\frac{\lambda_k}{\lambda_1}\leq \frac{\lambda_k}{\lambda_1}\frac{\lambda_k}{\lambda_2}\cdots \frac{\lambda_k}{\lambda_k}=H^2(g,h).
	$$
	%We may assume $\lambda_1\neq 0$ since otherwise there is noting to prove. 
	Now 
	$$
	K^2(\ol{g},\ol{h})=\frac{\lambda_k^Q}{\det(\ol{g}^{-1}\ol{h})}\leq \frac{\lambda_k^Q}{\lambda_1^{Q-1}\lambda_k}=\left(\frac{\lambda_k}{\lambda_1}\right)^{Q-1}\leq \big(H^2(g,h)\big)^{Q-1},
	$$
	where we have used~\eqref{lambas_and_det} in the first inequality.
	
	For the first inequality, note that we have
	\begin{equation}\label{fromKtoH}
		H^2(g,h)=\frac{\lambda_k^k}{\lambda_1\cdots\lambda_k}=\frac{\lambda_k^{k+2n_2+3n_3+\cdots +mn_m}}{\lambda_1\cdots\lambda_k \lambda_k^{2n_2+3n_3+\cdots + mn_m}} \leq \frac{\lambda_k^Q}{\lambda_1\cdots\lambda_k\mu_{k+1}\mu_{k+2}\cdots\mu_n}.
	\end{equation}
	Here in the inequality we have used the bounds
	$$
	\mu_s\leq \lambda_k^s
	$$
	from Proposition~\ref{eigenval_compr}. But now the right hand side of~\eqref{fromKtoH} is just $K^2(\ol{g},\ol{h})$ proving the other part of the claim. 
\end{proof}

We now prove the equivalence of the two introduced definitions of quasiregularity.
\begin{theorem}\label{thm:equivalence of horizontal and subRiemannian}
	Let $(M,g)$ and $(N,h)$ be equiregular subRiemannian manifolds. If $f\colon M\to N$ is a subRiemannian $K$-quasiregular mapping, then it is horizontally $K$-quasiregular. Conversely, if $f:M\to N$ is a horizontally $K$-quasiregular mapping, then it is subRiemannian $K^{Q-1}$-quasiregular.
\end{theorem}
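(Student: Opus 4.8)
The plan is to deduce the theorem directly from Proposition~\ref{prop:compare distortions}, applied to the pair of horizontal metrics $g$ and $f^*h$. Both notions of quasiregularity in the statement carry the regularity hypothesis $f\in N^{1,Q}_{loc}(M,N)$, so the only thing to verify is the pointwise comparison of the distortion quantities $K^2(\ol{g},\ol{f^*h})$ and $H^2(g,f^*h)$, together with the condition $J_f>0$ a.e.\ required by Definition~\ref{def:subRiemannian qr}. Recall that for a branched covering $f\in N^{1,Q}_{loc}(M,N)$ the pullback $f^*h$ is a well-defined horizontal tensor a.e.\ by Proposition~\ref{prop:on differentiability}, that $f^*h$ is positive definite at $p$ exactly when $f_*\colon\mD_p\to\mD_{f(p)}$ is injective, and that, by Proposition~\ref{prop:Jacobian=det Df}, this injectivity is in turn equivalent to $J_f(p)>0$; in that case $\ol{f^*h}$, and hence $K^2(\ol{g},\ol{f^*h})$, are defined.

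First I would treat the forward implication. Assume $f$ is subRiemannian $K$-quasiregular. Then $J_f>0$ a.e., so $f^*h$ is positive definite and $\ol{f^*h}$ is defined a.e., and the first inequality of Proposition~\ref{prop:compare distortions}, with $h$ replaced by $f^*h$, gives $H^2(g,f^*h)\le K^2(\ol{g},\ol{f^*h})$ a.e. Since $K^2(\ol{g},\ol{f^*h})\le K^2$ a.e.\ by hypothesis and $f\in N^{1,Q}_{loc}(M,N)$, we conclude that $H^2(g,f^*h)\le K^2$ a.e., i.e.\ $f$ is horizontally $K$-quasiregular.

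For the converse, assume $f$ is horizontally $K$-quasiregular, so that $H^2(g,f^*h)\le K^2<\infty$ a.e. Since $H^2(g,f^*h)=||g^{-1}f^*h||^k/\det(g^{-1}f^*h)$, its finiteness a.e.\ forces $\det(g^{-1}f^*h)>0$ a.e.; hence $f_*\colon\mD_M\to\mD_N$ is injective a.e., and therefore $J_f>0$ a.e.\ by the equivalence noted above. In particular $\ol{f^*h}$ is defined a.e., and the second inequality of Proposition~\ref{prop:compare distortions} yields $K^2(\ol{g},\ol{f^*h})\le\big(H^2(g,f^*h)\big)^{Q-1}\le K^{2(Q-1)}=\big(K^{Q-1}\big)^2$ a.e. Combined with $f\in N^{1,Q}_{loc}(M,N)$ and $J_f>0$ a.e., this is precisely the assertion that $f$ is subRiemannian $K^{Q-1}$-quasiregular. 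Since Proposition~\ref{prop:compare distortions} --- whose proof contains all the genuine work, namely the eigenvalue estimates for the distortion matrix --- is already available, there is no serious obstacle here; the only point requiring a little care is extracting $J_f>0$ a.e.\ from horizontal $K$-quasiregularity in the converse direction, which is handled as indicated above.
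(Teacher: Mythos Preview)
Your forward implication is fine and matches the paper: Proposition~\ref{prop:compare distortions} applied to $g$ and $f^*h$ gives horizontal $K$-quasiregularity directly.

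The converse has a genuine gap in the step where you extract $J_f>0$ a.e. You argue that the inequality $\|g^{-1}f^*h\|^k\leq K^2\det(g^{-1}f^*h)$ forces $\det(g^{-1}f^*h)>0$ a.e. This is not correct: at a point where $\det(g^{-1}f^*h)=0$, the inequality only yields $\|g^{-1}f^*h\|=0$, i.e.\ $f_*|_{\mD_p}=0$. Nothing in Definition~\ref{def:horizontal qr} rules out a priori that this happens on a set of positive measure, so the pointwise inequality alone cannot give injectivity of $f_*|_{\mD}$ a.e., and hence cannot give $J_f>0$ a.e.

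The paper closes this gap by a different route: from $\lambda_k\leq K^2\lambda_1$ and the Ahlfors-regularity estimate of Lemma~\ref{lemma:bounds on Jacobian}, one obtains $g_f(x_0)^Q=\lambda_k^{Q/2}\leq C K^Q\lambda_1^{Q/2}\leq C' J_f(x_0)$ locally. This shows $f$ is \emph{locally analytically quasiregular}, and then Lemma~\ref{lemma:positiveness of Jacobian} (a nontrivial input) gives $J_f>0$ a.e. Only after this is established can one invoke Proposition~\ref{prop:compare distortions} as you do. So the missing ingredient in your argument is precisely the passage through analytic quasiregularity and Lemma~\ref{lemma:positiveness of Jacobian}; the rest of your reasoning is sound.
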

\begin{proof}
	If $f\colon M\to N$ is a subRiemannian $K$-quasiregular mapping. Then it follows directly from Proposition~\ref{prop:compare distortions} that $f$ is horizontally $K$-quasiregular. 
	
	For the reverse direction, we only need to show that if $f\colon M\to N$ is a horizontally $K$-quasiregular mapping, then $J_f>0$ a.e. in $M$. Since this is a local property, by Lemma~\ref{lemma:positiveness of Jacobian}, it suffices to show that $f$ is locally analytically quasiregular. Let us denote the eigenvalues of $g^{-1}f^*h$ by $\lambda_1\leq \lambda_2\leq \cdots\leq \lambda_k$ arranged in increasing order. Then the horizontal $K$-quasiregularity implies that
	\begin{align*}
		\lambda_k^k\leq K\lambda_1\cdots\lambda_k,
	\end{align*}
	from which it follows in particular that
	\begin{align*}
		\lambda_k\leq K\lambda_1.
	\end{align*}
	On the other hand, by Lemma~\ref{lemma:bounds on Jacobian}, Lemma~\ref{lemma:Lip=norm of differential}, Proposition~\ref{prop:minimal upper gradient} and the above inequality, we have for each $x\in M$, there exist a radius $r_x>0$ and a positive constant $C>0$ such that

	\begin{align*}
		(g_f(x_0))^{2Q}&=\lambda_k^Q\leq K^Q\lambda_1^Q\leq CK^Q(J_f(x_0))^2\quad \text{for a.e. }  x_0\in B(x,r_x).
	\end{align*}
	This implies that $f$ is analytically $CK^Q$-quasiregular in $B(x,r_x)$. The claim follows again from Proposition~\ref{prop:compare distortions}.

\end{proof}

\section{Proofs of the main result}\label{sec:Proofs of the main result}
\subsection{Equivalence of the analytic and the geometric quasiregularity}
It is a well-known fact, due to Williams~\cite{w12proc}, that for a homeomorphism between fairly general metric measure spaces, the analytic $K$-quasiconformality and the geometric $K$-quasiconformality are equivalent. The proof there also works in the quasiregular case, but for the convenience of the readers, we provide the details here. In the following we use notions and definitions introduced in Section~\ref{sec:Calculus on subRiemannian manifolds}.

\begin{proposition}\label{prop:analytic equal geometric}
	The mapping $f:M\to N$ is analytically $K$-quasiregular if and only if it is geometrically $K$-quasiregular. 
\end{proposition}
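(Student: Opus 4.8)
The plan is to follow the template of Williams~\cite{w12proc}, where the equivalence of analytic and geometric quasiconformality is established for homeomorphisms between metric measure spaces, and to check that nothing in that argument uses injectivity in an essential way. The two implications are proved separately, and both rest on the characterization of Sobolev regularity via moduli (Theorem~\ref{thm:characterization of Sobolev spaces}), the identification $g_f=\Lip f$ (Proposition~\ref{prop:minimal upper gradient}), and the pointwise Jacobian description~\eqref{eq:another description of Jacobian}.

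\emph{Analytic $\THEN$ geometric.} Suppose $f\in N^{1,Q}_{\loc}(M,N)$ with $g_f^Q\leq K J_f$ a.e. Fix an open set $\Omega\subset M$, a curve family $\Gamma$ in $\Omega$, and an admissible function $\rho$ for $f(\Gamma)$. The key step is to show that $\tilde\rho:=(\rho\circ f)\cdot g_f$ is admissible for (almost every curve in) $\Gamma$: for a rectifiable $\gamma\in\Gamma$, the pushforward $f\circ\gamma$ is rectifiable and $\int_\gamma (\rho\circ f)g_f\,ds\geq \int_{f\circ\gamma}\rho\,ds\geq 1$, using that $g_f$ is an upper gradient (this is where one needs $g_f=\Lip f$ to control the length of $f\circ\gamma$ along $p$-a.e. curve). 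Since $Q$-modulus is not changed by removing a $Q$-exceptional family, $\tilde\rho$ is admissible for $\Gamma$ up to such a family, so
\[
\Modd_Q(\Gamma)\leq \int_M \tilde\rho^Q\,d\Vol_M=\int_M (\rho\circ f)^Q\, g_f^Q\,d\Vol_M \leq K\int_M (\rho\circ f)^Q\, J_f\,d\Vol_M.
\]
Then the change-of-variables formula for the volume Jacobian of a branched covering, namely $\int_M (u\circ f)\,J_f\,d\Vol_M=\int_N u(q)\,N(q,f,M)\,d\Vol_N(q)$ (applied with $u=\rho^Q$, localized to $\Omega$ so that the multiplicity function is $N(q,f,\Omega)$), converts the right side into $K\int_N N(q,f,\Omega)\rho^Q(q)\,d\Vol_N(q)$, which is the $K_O$-inequality.

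\emph{Geometric $\THEN$ analytic.} Conversely, assume $f$ satisfies the $K_O$-inequality. First one uses it to deduce Sobolev regularity: applying the inequality with $\Omega=M$ (assume $M$ compact, as in Theorem~\ref{thm:characterization of Sobolev spaces}, or localize), $\Gamma=f^{-1}(\Co_\varepsilon(N))$, and $\rho=\varepsilon^{-1}\mathbf{1}$ (admissible for $\Co_\varepsilon(N)$, hence for $f(\Gamma)$), gives $\varepsilon^{Q}\Modd_Q\big(f^{-1}(\Co_\varepsilon(N))\big)\leq K\,\Vol_N(N)<\infty$, so $f\in N^{1,Q}_{\loc}(M,N)$. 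Now $f$ is $P$-differentiable a.e. (Proposition~\ref{prop:on differentiability}), so $g_f=\Lip f=\|Df\|$ a.e., and $J_f=J_{Df}$ a.e. (Proposition~\ref{prop:Jacobian=det Df}). To get the pointwise inequality $g_f^Q\leq K J_f$ a.e., the idea is to localize the $K_O$-inequality at a point $p_0$ of differentiability: use a family of ``radial-type'' curves in a small ball $B(p_0,r)$ together with the refined modulus estimate of Theorem~\ref{thm:characterization of Sobolev spaces} (the equality $\|g_f\|_{L^Q}^Q=\lim_{\varepsilon\to0}\varepsilon^Q\Modd_Q(f^{-1}(\Co_\varepsilon(N)))$), and then differentiate both sides of the resulting integral inequality via the Lebesgue differentiation theorem, using~\eqref{eq:another description of Jacobian} for $J_f$. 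Carrying this out carefully is the main obstacle: one must choose the competitor curve families and test functions so that the constant $K$ is preserved with no loss, and verify that the exceptional sets (where differentiability fails, or where the Lebesgue point property fails, or the $Q$-exceptional curve families) can all be discarded simultaneously. This is precisely the content of~\cite[proof of Theorem~3.10 / Section~3]{w12proc}, adapted from the homeomorphic to the branched-covering setting by replacing $\Vol_N(f(\cdot))$ with the multiplicity-weighted measure $\int N(q,f,\cdot)\,d\Vol_N(q)$ throughout; the branched-covering case is legitimate since, for $f\in N^{1,Q}_{\loc}$ with $J_f>0$ a.e., $f$ is discrete and open and the change-of-variables formula above holds (cf.~\cite{g14}).

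I expect the forward direction to be essentially routine once the admissibility claim and the change-of-variables formula are in place; the converse, and in particular pinning down the constant $K$ in the pointwise bound $g_f^Q\le KJ_f$ without introducing an extra multiplicative constant, will be the delicate point, and I would lean heavily on the sharp modulus identity in Theorem~\ref{thm:characterization of Sobolev spaces} to avoid it.
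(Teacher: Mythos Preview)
Your forward direction (analytic $\Rightarrow$ geometric) matches the paper's argument essentially line for line: define $\tilde\rho=(\rho\circ f)g_f$, check admissibility along $Q$-a.e.\ curve using absolute continuity of $f$ on curves, apply $g_f^Q\leq KJ_f$, and finish with the change-of-variables formula (the paper invokes \cite[Theorem~B]{gnw15} for this last step).

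In the reverse direction you have the right ingredients but miss the clean route, and the detour through $P$-differentiability and ``radial-type'' curve families is unnecessary. The paper does \emph{not} establish the pointwise inequality by localizing at points of differentiability. Instead, it applies the $K_O$-inequality and Theorem~\ref{thm:characterization of Sobolev spaces} not just once globally but on \emph{every} open set $U$ with $f^*\Vol_N(U)<\infty$: taking $\rho=\varepsilon^{-1}\chi_{f(U)}$ gives
\[
\varepsilon^Q\Modd_Q\big(f^{-1}(\mathcal{C}_\varepsilon(f(U)))\big)\leq K\,f^*\Vol_N(U),
\]
and Theorem~\ref{thm:characterization of Sobolev spaces} then yields directly the \emph{integral} bound
\[
\int_U g_f^Q\,d\Vol_M\leq K\,f^*\Vol_N(U).
\]
Since this holds for all open (hence all Borel) $U$, the pointwise inequality $g_f^Q\leq KJ_f$ a.e.\ follows immediately from the definition of $J_f$ as the Radon--Nikodym derivative of $f^*\Vol_N$ with respect to $\Vol_M$. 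No appeal to Propositions~\ref{prop:on differentiability}, \ref{prop:Jacobian=det Df}, or \ref{prop:minimal upper gradient} is needed in this direction, and the constant $K$ is preserved automatically. Your worry about ``the main obstacle'' disappears once you localize the application of Theorem~\ref{thm:characterization of Sobolev spaces} rather than trying to extract the pointwise bound from a single global estimate.
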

\begin{proof}
	i) (Analytic implies geometric)
	
	Note first that since $f\in N^{1,Q}_{loc}(M,N)$ it follows that $f$ is absolutely continuous on every $\gamma\in\Gamma':=\Gamma\backslash \Gamma_0$, where $\Modd_Q(\Gamma_0)=0$ (see e.g.~\cite[Section 6.3]{hkst12}).
	
	Fix an open set $\Omega\subset M$ and a curve family $\Gamma$ in $\Omega$. Let $\rho:N\to [0,\infty]$ be a test function for $f(\Gamma)$. 
	Define $\rho':\Omega\to [0,\infty]$ by setting
	\begin{align*}
	\rho'(x)=(\rho\circ f)(x)g_f(x).
	\end{align*}
   Thus,
	\begin{align*}
	\int_{\gamma}\rho' ds=\int_\gamma (\rho\circ f)g_f ds\geq \int_{f\circ \gamma}\rho ds\geq 1
	\end{align*}
	for every $\gamma\in \Gamma'$. Since $\rho'$ is a Borel function, we conclude that $\rho'$ is a test function for $\Gamma'$.
	
	Since $f$ is analytically $K$-quasiregular, $g_f(x)^Q\leq KJ_f(x)$ for a.e. $x\in \Omega$. By~\cite[Theorem B]{gnw15} and the subadditivity of modulus, 
	\begin{align*}
	\Modd_Q(\Gamma)&=\Modd_Q(\Gamma')\leq \int_{\Omega}\rho'(x)^Qd\Vol_M(x)=\int_\Omega \rho(f(x))^Qg_f(x)^Qd\Vol_M(x)\\
	&\leq K\int_\Omega \rho(f(x))^QJ_f(x)d\Vol_M(x)=K\int_NN(y,f,\Omega)\rho(y)^Qd\Vol_N(y).
	\end{align*}
	
	ii)  (Geometric implies analytic)
	
	The proof is a modification of the one used in the quasiconformal case~\cite{w12proc}. 
	By Theorem~\ref{thm:characterization of Sobolev spaces}, we only need to verify~\eqref{eq:mod for Sob regularity}. First of all, note that whenever $U\subset M$ is an open set such that $f^*\Vol_N(U)<\infty$, we have 
	\begin{align*}
	\liminf_{\varepsilon\to 0}\varepsilon^Q\Modd_Q\Big(f^{-1}\big(\mathcal{C}_\varepsilon(f(U))\big)\Big)\leq Kf^*\Vol_N(U)<\infty,
	\end{align*}
	where $\mathcal{C}_\varepsilon(\cdot)$ is the curve family defined as in Section~\ref{subsec:Sobolev spaces} (before Theorem~\ref{thm:characterization of Sobolev spaces}). Indeed, given such $\varepsilon>0$ and $U\subset M$, the function $\rho=\varepsilon^{-1}\,\chi_{f(U)}$ is admissible for $f^{-1}\big(\mathcal{C}_\varepsilon(f(U))\big)$. Thus the $K_O$-inequality (cf.~Definition~\ref{def:geometric def for qr})  implies that 
	\begin{align*}
	\varepsilon^Q\Modd_Q\Big(f^{-1}\big(\mathcal{C}_\varepsilon(f(U))\big)\Big)\leq K\varepsilon^Q\int_N\varepsilon^{-Q}N(y,f,U)d\Vol_N(y)=Kf^*\Vol_N(U).
	\end{align*}
	Theorem~\ref{thm:characterization of Sobolev spaces} gives $f|_{U}\in N^{1,Q}(U,N)$ and
	\begin{align*}
	\int_U g_{f|_U}^Qd\Vol_M\leq Kf^*\Vol_N(U).
	\end{align*}
	It follows that $f\in N^{1,Q}_{loc}(M,N)$ and that
	\begin{align*}
	\int_U g_f^Qd\Vol_M\leq Kf^*\Vol_N(U)
	\end{align*}
	for every open set $U\subset M$. Note that the measure $f^*\Vol_N$ is Borel, the above inequality holds for all Borel sets as well. The claim follows from the definitions of $f^*\Vol_N$ and volume Jacobian $J_f$. 	
\end{proof}

\subsection{Proof of the main theorem, Theorem~\ref{thm:equivalence of quasiregular mappings}}
\begin{proof}[Proof of Theorem~\ref{thm:equivalence of quasiregular mappings}]
(A).  1) implies 2).

This follows directly from the definitions.
	
(B).  2) implies 4).
	Let $\lambda_1,\ldots,\lambda_k$ be the eigenvalues of $g^{-1}f^*h$ arranged in increasing order. We have 
	$$
	\|Df(x)\|=\lambda_k^{1/2} \mbox{ and } \|Df(x)\|_s=\lambda_1^{1/2}.
	$$
	(Of course the eigenvalues $\lambda_i$ depend on $x$, but we omit $x$ from the notation.)
	Since $f$ is weak metrically $H$-quasiregular, by Lemma~\ref{Lip=norm of differential} and Proposition~\ref{prop:Jacobian=det Df}, we have for a.e. $x\in M$, 
	\begin{align*}
	\Lip f(x)=\|Df(x)\|\quad \text{ and }\quad J_f(x)=J_{Df(x)}(0).
	\end{align*}
	On the other hand, by Lemma~\ref{lemma:simple}, for a.e. $x\in M$,
	\begin{align*}
	\frac{\|Df(x)\|}{\|Df(x)\|_s}\leq H.
	\end{align*}
	It follows from the above and Proposition~\ref{eigenval_compr} that for a.e. $x\in M$
	\begin{align*}
	\Lip f(x)^Q&=\|Df(x)\|^Q\leq H^{Q-1}\|Df(x)\|\|Df(x)\|_s^{Q-1}\\
	&\leq H^{Q-1}J_{Df(x)}(0)=H^{Q-1}J_f(x).
	\end{align*}
	%where the last equality follows from Proposition~\ref{prop:Jacobian=det Df}.
	This implies that $f$ is analytically $K$-quasiregular with $K=H^{Q-1}$.

(C). 4) implies 6). 
	
	If $f$ is analytically $K$-quasiregular, then by Lemma~\ref{lemma:positiveness of Jacobian} we have that $J_f>0$ a.e. in $M$. Moreover, it follows from Proposition~\ref{prop:minimal upper gradient} that $g_f=\Lip f$ and so by Lemma~\ref{lemma:Lip=norm of differential} and Lemma~\ref{lemma:coincidence of two Jacobian}, we have 
	\begin{align*}
		||g^{-1}f^*h(x)||^{Q/2}=\|Df(x)\|^{Q}\leq KJ_f(x)=K\det(\overline{g}^{-1}\overline{f^*h})^{1/2}
	\end{align*}  
	for a.e. $x\in M$.
	
(D). 6) implies 4).

This is essentially the same as in (C). By Proposition~\ref{prop:minimal upper gradient}, Lemma~\ref{lemma:Lip=norm of differential} and Lemma~\ref{lemma:coincidence of two Jacobian}, we have
    	\begin{align*}
    	g_f(x)^Q=\|Df(x)\|^{Q}=||g^{-1}f^*h(x)||^{Q/2}\leq K\det(\overline{g}^{-1}\overline{f^*h})^{1/2}=KJ_f(x)
    	\end{align*}  
   for a.e. $x\in M$.
	
(E). 6) implies 1).
	
	The fact that $H_f(x)<\infty$ for all $x\in M$ follows from 3) and the fact that equiregular subRiemannian manifolds of homogeneous dimension $Q\geq 2$ have locally $Q$-bounded geometry (see e.g.~\cite{gnw15,mm95}). Indeed, we may use the proof of Theorem 7.1 from~\cite{or09} to conclude that $H_f(x)\leq H(K,i(x,f))$ for each $x\in M$, where $i(x,f)$ is the local index of $f$ at $x$; see~\cite[Remark 5.14]{gnw15} and also~\cite[Theorem 1.3]{w12}. Thus we are left to show that $H_f(x)\leq K$ a.e. in $M$.
	
	Note that 
	\begin{align*}
	H_f(x)=\limsup_{r\to 0}\frac{L_f(x,r)}{l_f(x,r)}=\limsup_{r\to 0}\frac{L_f(x,r)/r}{l_f(x,r)/r}\leq \frac{L_f(x)}{l_f(x)},
	\end{align*}
	where 
	\begin{align*}
	L_f(x)=\limsup_{r\to 0}\frac{L_f(x,r)}{r}\quad \text{and}\quad l_f(x)=\liminf_{r\to 0}\frac{l_f(x,r)}{r}.
	\end{align*}
	Since $f$ is $P$-differentiable a.e. in $M$, then we have by~\eqref{eq:def maximal norm} and~\eqref{eq:def for minimal norm} that $L_f(x)=\|Df(x)\|$ and $l_f(x)=\|Df(x)\|_s$ a.e. in $M$.
	
   Since $f$ is subRiemannian $K$-quasiregular, by Proposition~\ref{eigenval_compr}, we have at a.e. $x\in M$
   \begin{align*}
   	\|Df(x)\|^Q\leq K\det(\overline{g}^{-1}\overline{f^*h})^{1/2}\leq K\|Df(x)\|_s\|Df(x)\|^{Q-1}
   \end{align*}
   and so at each such $x\in M$,
   \begin{align*}
   H_f(x)\leq \frac{L_f(x)}{l_f(x)}=\frac{\|Df(x)\|}{\|Df(x)\|_s}\leq K.	
   \end{align*}

(F). 2) implies 1).

Note that the implications 2)$\Rightarrow$4)$\Rightarrow$6)$\Rightarrow$1) do not give us the claimed relation between the quasiregularity constants as claimed in Theorem~\ref{thm:equivalence of quasiregular mappings}. We thus need a separate argument to show that the weak metric quasiregularity implies the metric quasiregularity with the exact same constant.

The fact that $H_f(x)<\infty$ for all $x\in M$ follows from the fact that 2) implies 4) and that 4) implies 1). It remains to show that $H_f(x)\leq H$ for a.e. $x\in M$. This follows readily from the proof of Proposition 5.22 in~\cite{gnw15}. Indeed, the proof of Proposition 5.22 there implies that if $x\in M$ is a point such that $f$ is $P$-differentiable at $x$ and that $J_f(x)>0$, then we have
\begin{align*}
	H_f(x)=\limsup_{r\to 0}\frac{L_f(x,r)}{l_f(x,r)}=\lim_{r\to 0}\frac{L_f(x,r)}{l_f(x,r)}=\liminf_{r\to 0}\frac{L_f(x,r)}{l_f(x,r)}=h_f(x). 
\end{align*}

(G). We compute the remaining dependences on the quasiregularity constants. We first show that if $f$ is metrically $H$-quasiregular, then it is horizontally $H^{k-1}$-quasiregular. %Indeed, let $\lambda_1\leq \cdots\leq \lambda_k$ be the eigenvalues
%of $g^{-1}f^*h$ arranged in increasing order. Then 
It follows from Lemma~\ref{lemma:simple} that the metric $H$-quasiregularity implies that $\lambda_k\leq H^2\lambda_1$. It follows that
\begin{align*}
	\|g^{-1}f^*h\|^{k/2}=\lambda_k^{k/2}\leq H^{k-1}(\lambda_1^{k-1}\lambda_k)^{1/2}\leq H^{k-1}\det(g^{-1}f^*h)^{1/2}.
\end{align*}
%where the last inequality follows from Proposition~\ref{eigenval_compr}.

We next show that if $f$ is horizontally $K_0$-quasiregular, then it is metrically $K_0$-quasiregular. Note that, we already know from the preceding equivalences that $H_f(x)<\infty$ for all $x\in M$, and thus, we only need to verify that $H_f(x)\leq K_0$ for a.e. $x\in M$. 
%Indeed, let $\lambda_1\leq \cdots\leq \lambda_k$ be the eigenvalues
%of $g^{-1}f^*h$ arranged in increasing order. 
The horizontal $K_0$-quasiregularity implies that $\lambda_k^k\leq K_0^2\lambda_1\cdots\lambda_k$. In particular, it follows that $\lambda_k\leq K_0^2\lambda_1$. We may repeat the argument in (E) to conclude that at a.e. $x\in M$
\begin{align*}
	H_f(x)\leq \frac{\lambda_k^{1/2}}{\lambda_1^{1/2}}\leq K_0.
\end{align*}

The final implication iv) is included in Theorem~\ref{thm:equivalence of horizontal and subRiemannian}.
\end{proof}

\begin{remark}\label{rmk:on the main proof}
	i) In both (B) and (E) of the above proof, our reasoning relies crucially on Proposition~\ref{eigenval_compr}, where the estimates on the eigenvalues are sharper than that provided in Lemma~\ref{lemma:bounds on Jacobian}.
	
	ii) If we were able to show that the constant $C$ from Lemma~\ref{lemma:bounds on Jacobian} can be chosen to be one. Then the above proof (with $K=1$) would directly imply the equivalences of 1), 2), 4) and 5) in Corollary~\ref{coro:equivalence of 1-quasiregular mappings}.
\end{remark}

\subsection{A remark on the definitions of quasiregularity in the Heisenberg groups}\label{subsec:A remark on the definitions of quasiregularity in the Heisenberg groups}
In this section, we point out that for mappings on Heisenberg groups $\mathbb{H}^n$, the horizontal definition of quasiregularity, Definition~\ref{def:horizontal qr}, is quantitatively equivalent with a definition of quasiregularity introduced by Dairbekov~\cite{d99,d00}.

We first recall Dairbekov's definition. Let $U\subset \mathbb{H}^n$ be a domain and let $f\colon U\to \mathbb{H}^n$ be a continuous mapping. Dairbekov defines~\cite[Definition 1.3]{d00} the mapping $f$ to be $K$-quasiregular  %(cf.~\cite[Definition 1.3]{d00}) 
if the following three conditions are satisfied:
    \begin{itemize}
		\item $f\in HW^{1,Q}_{loc}(U,\mathbb{H}^n)$,
		\item $f$ is contact,
		\item $\|D_Hf(x)\|^Q\leq KJ(x,f)$ \text{ almost everywhere in }U.
	\end{itemize}
In this case we say that $f$ is $K$-quasiregular \textit{in the sense of Dairbekov}. The measure on $\mathbb{H}^n$ is the $\R^{2n+1}$ Lebesgue measure and $Q=2n+2$ for the Heisenberg group $\mathbb{H}^n$. Note that the Lebesgue $(2n+1)$-measure coincides with the Popp measure $\Vol_Q$ on $\mathbb{H}^n$.

Above, $HW^{1,Q}_{loc}(U,\mathbb{H}^n)$ denotes the \textit{local horizontal Sobolev space of mappings} $f:U\to \mathbb{H}^n$. These are defined component-wise as follows. On the Heisenberg group we can express $f$ in the global coordinates of $\R^{2n+1}$ as $f=(f_1,\dots,f_{2n+1})$. We define that $f$ belongs to $HW^{1,Q}_{loc}(U,\mathbb{H}^n)$ %if :U\to \mathbb{H}^n$, $f\in HW^{1,Q}_{loc}(U,\mathbb{H}^n)$ if 
if each component $f_i$, with $i=1,\dots,2n+1$, of $f$ belongs to the $HW^{1,Q}_{loc}(U)$. Here $HW^{1,Q}_{loc}(U)$ denotes the \textit{local horizontal Sobolev space of functions} 
$$
HW^{1,Q}_{loc}(U)=\{u:U\to \mathbb{H}^n: u\in L^Q_{loc}(U), \quad  X_iu \in L^Q_{loc}(U), \ i=1,\dots,2n \} 
$$
See~\cite[Section 11]{hk00} for more details on these spaces.

Let us first make the observation that a continuous mapping $f\colon U\to \mathbb{H}^n$ belongs to $HW^{1,Q}_{loc}(U,\mathbb{H}^n)$ if and only if $f\in N^{1,Q}_{loc}(U,\mathbb{H}^n)$. To see this, notice first that in the Heisenberg groups $\mathbb{H}^n$ a continuous mapping $f\in N^{1,Q}_{loc}(U,\mathbb{H}^n)$ if and only if each of its components $f_i$, $i=1,\dots,2n+1$, belongs to $N^{1,Q}_{loc}(U)$. Our claim then follows from the well-known fact that \textit{a continuous function $u\in HW^{1,Q}_{loc}(U)$ if and only if it belongs to $N^{1,Q}_{loc}(U)$}; see~\cite[Section 11]{hk00}. 

%For a mapping  the following holds almost everywhere in $U$. 
Let $f\in HW^{1,1}_{loc}(U,\mathbb{H}^n)$. The vectors $X_if(x)=(X_if_1(x),\dots,X_if_{2n+1}(x))$, $i=1,\dots,2n$, viewed as column vectors can be treated as tangent vectors at $f(x)$, i.e. $X_if(x)\in T_{f(x)}\mathbb{H}^n$. A mapping $f:U\to \mathbb{H}^n$ of the class $HW^{1,1}_{loc}(U,\mathbb{H}^n)$ is (weakly) \textit{contact} if for each $i=1,\dots,2n$, the tangent vector $X_if(x)$ belongs to the horizontal tangent space $HT_{f(x)}\mathbb{H}^n$. The \textit{formal horizontal differential} $D_Hf(x):HT_xU\to HT_{f(x)}\mathbb{H}^n$ of a contact mapping $f:U\to \mathbb{H}^n$ is defined by its action on the basis vectors:
\begin{align*}
	D_Hf(x)X_i=X_if(x)\quad i=1,\dots,2n.
\end{align*}

Let $(X_1,\ldots,X_{2n})$ be standard global frame for the horizontal distribution satisfying the commutation relations
$$
[X_j,X_{j+n}]=-4T, \quad j=1,\ldots,n.
$$
The other commutators are identically zero. The \textit{horizontal Jacobian} $HJ(x,f)$ of $f$ is defined as the determinant of $D_Hf(x)$ expressed in the frame $(X_1,\dots,X_{2n})$ (used for both $HT_xU$ and $HT_{f(x)}\mathbb{H}^n$). The formal horizontal differential can be extended linearly as a homomorphism of the Heisenberg algebra so that one can talk about the \textit{formal differential}. Then the \textit{full Jacobian} $J(x,f)$ of $f$ is defined to be the determinant of the formal differential expressed in the full frame $(X_1,\ldots,X_n,T)$ of $T\mathbb{H}^n$. A computation~\cite{d00} shows that 
\begin{equation}\label{rel_of_jacobs}
J(x,f)=HJ(x,f)^{\frac{n+1}{n}}. 
\end{equation}

As in Section~\ref{subsec:Some basic facts about quasiregular mappings in subRiemannian manifolds}, the norm of the formal horizontal differential of a contact mapping $f$ is defined to be
\begin{align*}
	\|D_Hf(x)\|:=\max_{v} \Big\{|D_Hf(x)v|_g:v\in HT_xU\ \text{and}\ |v|_g=1\Big\}. 
\end{align*}
The norms here are defined with respect to a horizontal metric $g$, which itself is defined by declaring the frame $(X_1,\ldots X_n)$ orthonormal.

Recall here that the branch set $\mathcal{B}_f$ of a continuous mapping $f\colon M\to N$ consists of those points $p$ in $M$, where $f$ fails to be a local homeomorphism at $p$.

To compare the regularity assumptions in the Dairbekov's definition and our horizontal definition, we need the following simple lemma, which is certainly known to experts.
\begin{lemma}\label{lemma:continuity}
    Let $f\colon U\to \mathbb{H}^n$ be an analytically quasiregular mapping. Then $f\in N^{1,p}_{loc}(U,\mathbb{H}^n)$ for some $p>Q$ and hence $f$ is locally $\alpha$-H\"older continuous with $\alpha=1-\frac{Q}{p}$.
\end{lemma}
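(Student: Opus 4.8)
\noindent\emph{Plan of proof.} The plan is to establish the lemma in two stages: first prove a self\/-improving higher integrability for the minimal $Q$\/-weak upper gradient $g_f$ of $f$ — a Gehring\/-type lemma producing an exponent $p>Q$ with $g_f\in L^{p}_{loc}$ — and then conclude by the Morrey--Sobolev embedding valid on spaces of locally $Q$\/-bounded geometry. Recall that equiregular subRiemannian manifolds, and in particular $\mathbb{H}^n$ endowed with its Lebesgue measure (which coincides with the Popp measure), are locally Ahlfors $Q$\/-regular — this is already used in Lemma~\ref{lemma:bounds on Jacobian} — and support a local $(1,1)$\/-Poincar\'e inequality; hence they have locally $Q$\/-bounded geometry in the sense of \cite{hkst01,w12}.

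First I would work locally: fix $V\subsubset U$; since $f$ is a branched covering, the multiplicity $N(\cdot,f,V)$ is bounded on $V$. Using that $f$ is analytically, and therefore (Proposition~\ref{prop:analytic equal geometric}) geometrically, $K$\/-quasiregular, the next step is to derive a weak reverse H\"older (Caccioppoli\/-type) inequality
\[
\left(\dashint_{B}g_f^{Q}\,d\Vol_M\right)^{1/Q}\le C\left(\dashint_{\lambda B}g_f^{q}\,d\Vol_M\right)^{1/q}
\]
for some fixed $q\in(1,Q)$, some $\lambda>1$, and all balls $B$ with $\lambda B\subset V$, where $C$ depends only on $K$, $Q$, the Ahlfors and Poincar\'e constants and the multiplicity bound. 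This is exactly the estimate obtained by Heinonen and Koskela \cite{hk98} in the quasiconformal case, transferred to the branched setting by Onninen--Rajala \cite{or09} and Williams \cite{w12}; its proof combines the $K_O$\/-inequality, the upper gradient inequality~\eqref{ugdefeq}, the Poincar\'e inequality and Ahlfors regularity, and on $\mathbb{H}^n$ one may alternatively invoke Dairbekov~\cite{d99}. An application of Gehring's lemma on the doubling metric measure space $V$ then yields an exponent $p>Q$ with $g_f\in L^{p}_{loc}(U)$, that is, $f\in N^{1,p}_{loc}(U,\mathbb{H}^n)$.

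Finally, since $p>Q$, the Morrey--Sobolev embedding for Newtonian functions on spaces of locally $Q$\/-bounded geometry (see \cite[Ch.~9]{hkst12} or \cite{hk00}) shows that $f$ has a representative that is locally $\alpha$\/-H\"older continuous with $\alpha=1-\tfrac{Q}{p}$; as $f$ is already continuous, $f$ itself is locally $\alpha$\/-H\"older continuous. The only nonroutine point in this argument is the weak reverse H\"older inequality in the branched, non\/-homeomorphic situation — this is where discreteness and openness of $f$, together with the control on its multiplicity, genuinely enter — but it is available in the cited references, and all remaining steps are soft.
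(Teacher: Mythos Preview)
Your proposal is correct but follows a genuinely different route from the paper's own proof. The paper does not run a Gehring-type argument directly for the branched map. Instead, it removes the (preimage of the image of the) branch set $f^{-1}(f(\mathcal{B}_f))$, notes that on each relatively compact piece of the complement $f$ splits as a finite union of injective sheets, and thus reduces to the \emph{quasiconformal} case on each sheet. There the higher integrability of the Jacobian is read off from the Heinonen--Koskela theory \cite[Theorem~7.11]{hk98} (after passing through local quasisymmetry via \cite[Theorem~9.8]{hkst01}). Finally, the paper invokes that $\mathcal{H}^Q(\mathcal{B}_f)=\mathcal{H}^Q(f(\mathcal{B}_f))=0$ (from \cite{gw15,gnw15}) to conclude $J_f\in L^p_{loc}(U)$, and then the analytic quasiregularity inequality gives $g_f\in L^p_{loc}$.

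Your approach bypasses the branch-set excision entirely by establishing the weak reverse H\"older inequality directly for the branched covering (using bounded local multiplicity and the $K_O$-inequality), then applying Gehring's lemma once. This is cleaner and more self-contained, and avoids the somewhat delicate step of patching local $L^p$ estimates across a measure-zero set with a uniform exponent $p$; on the other hand, it leans on the reverse H\"older inequality for non-injective maps, which you correctly source from \cite{or09,w12} (or \cite{d99} in the Heisenberg case). Both arguments finish identically via the Morrey--Sobolev embedding on spaces of locally $Q$-bounded geometry.
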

\begin{proof}
 Our aim is to show that $J_f\in L^p_{loc}(U)$ for some $p>Q$. For this, we fix a point $x\in \Omega\backslash f^{-1}\big(f(\mathcal{B}_f)\big)$  and
 a subdomain $\mathcal{K}\subset\subset U$ with $x\in \mathcal{K}$. Note that $f|_{K\backslash f^{-1}\big(f(\mathcal{B}_f)\big)}$ is a covering mapping and so we may write
\begin{equation*}
    K\backslash f^{-1}\big(f(\mathcal{B}_f)\big)=\bigcup_{i=1}^kU_i,
\end{equation*}
where $U_i\cap U_j=\emptyset$ if $i\neq j$ and $f|_{U_i}$ is a homeomorphism
onto its image. In particular, $f|_{U_i}\colon U_i\to f(U_i)$ is a
quasiconformal mapping in the analytic sense. Since $U$ and $\mathbb{H}^n$ have locally $Q$-bounded
geometry~\cite{hk98}, it follows by~\cite[Theorem 9.8]{hkst01} that $f$ is locally quasisymmetric in $U_i$. By~\cite[Theorem
7.11]{hk98} we then have that $J_{f|_{U_i}}\in L^p_{loc}(U_i)$ for some $p>Q$. 

The above implies
that for each $x\in U\backslash f^{-1}\big(f(\mathcal{B}_f)\big)$, there
exists a relatively compact neighborhood $\mathcal{K}_x$ of $x$ such that
$J_f\in L^p(\mathcal{K}_x)$. On the other hand, by~\cite[Corollary 1.3]{gw15} (or by~\cite[Theorem B]{gnw15}), we have that
 $\mathcal{H}^Q(\mathcal{B}_f)=\mathcal{H}^Q\big(f(\mathcal{B}_f)\big)=0$. Consequently, we have that $J_f\in L^p_{loc}(U)$. Since $f$ is analytically quasiregular, this implies that $f\in N^{1,p}_{loc}(U,\mathbb{H}^n)$. The second claim follows again from the standard Sobolev embedding, see e.g.~\cite[Section 7]{hkst12}.
\end{proof}

We claim first that if $f\colon (U,g)\to (\mathbb{H}^n,g)$ is horizontally $K$-quasiregular, then it is $K^{(n+1)/n}$-quasiregular in the sense of Dairbekov. By Theorem~\ref{thm:equivalence of quasiregular mappings} and Lemma~\ref{lemma:continuity}, we have $f\in N^{1,p}_{loc}(U,\mathbb{H}^n)$ for some $p>Q$. It follows that $f\in HW^{1,Q}_{loc}(U,\mathbb{H}^n)$. Consequently, by~\cite[Theorem 5.1]{d00}, we have that $f$ is $P$-differentiable a.e. in $U$ and the $P$-differential coincides with the formal differential a.e. in $U$. Thus $f$ is contact (see Remark~\ref{rmk:section 4}) and for a.e. $x\in U$ we have
\begin{align*}
	\|D_Hf(x)\|^Q&=\|g^{-1}f^*h(x)\|^{Q/2}\leq K^{(2n+2)/2n}\det(g^{-1}f^*h(x))^{(2n+2)/4n}\\
	&=K^{(n+1)/n}HJ(x,f)^{(n+1)/n}=K^{(n+1)/n}J(x,f).
\end{align*}
Here we have use the observation that it follows from~\eqref{rel_of_jacobs} that
$$
\det(g^{-1}f^*h(x))^{(2n+2)/2n}=\det{(D_Hf(x))}^2=HJ(x,f)^2
$$
since $D_Hf$ here is expressed in the orthonormal frame $(X_1,\ldots,X_{2n})$, in which case $g$ and $h$ are identical matrices.

Conversely, we show that if $f\colon U\to \HH^n$ is $K$-quasiregular in the sense of Dairbekov, then it is horizontally $K^{n/(n+1)}$-quasiregular. Indeed, by~\cite[Theorem 5.1, Theorem 5.5]{d00} $f$ is a branched covering,  it is $P$-differentiable a.e. in $U$ and the $P$-differential coincides with the formal differential a.e. in $U$. Thus for a.e. $x\in U$
\begin{align*}
	\|g^{-1}f^*h(x)\|^k&=\|D_Hf(x)\|^{2k}\leq K^{2n/(n+1)}J(x,f)^{2n/(n+1)}\\
	&=K^{2n/(n+1)}HJ(x,f)^2=K^{2n/(n+1)}\det(g^{-1}f^*h(x)).
\end{align*}
We have shown that the horizontal definition of quasiregularity on Heisenberg groups is quantitatively equivalent with the definition of quasiregularity introduced by Dairbekov~\cite{d99,d00}.

We make an additional observation regarding the first Heisenberg group $\mathbb{H}^1$. In this case $Q=4$ and by Remark~\ref{rmk:step 2}, we have
\begin{align*}
	\det(\ol{g}^{-1}\ol{f^*h}(x))=\lambda_1\lambda_2\mu_1=(\lambda_1\lambda_2)^2=J(x,f)
\end{align*}
for a.e. $x\in U$, where $\lambda_1$ and $\lambda_2$ are the eigenvalues of $g^{-1}f^*h$ and $\mu_1$ is the other eigenvalue of $\ol{g}^{-1}\ol{f^*h}$. Consequently, the definition of \emph{subRiemannian} $K$-quasiregularity coincides with the definition of Dairbekov in this case. It is natural to ask how closely related the subRiemannian definition and Dairbekov's definition are for $n\geq 2$. We leave this question open.

\section{Appendix: equivalence of 1-quasiregular mappings between Riemannian manifolds}\label{sec:Appendix}
In this appendix we prove the equivalence of different notions of $1$-quasiregularity for mappings between Riemannian manifolds. The results we present here are special cases of our more general result, especially of Corollary~\ref{coro:equivalence of 1-quasiregular mappings}, in the subRiemannian setting.

We find that giving an independent presentation in the Riemannian setting has a few benefits: The proof of the equivalence in the Riemannian setting is much more straightforward than that in the subRiemannian case. An independent proof in the Riemannian settings serves better readers that are not familiar with subRiemannian geometry. The presentation here motivates much of the work done in this paper, and in particular it motivates the proof of our main result, Theorem~\ref{thm:equivalence of quasiregular mappings}. 

In this section $(M,g)$ and $(N,h)$ are Riemannian $C^\infty$-smooth Riemannian manifolds of dimension $n$. The equivalence result is the following.

\begin{theorem}\label{thm:equivalence}
	Let $f\colon (M,g)\to (N,h)$ be a non-constant continuous mapping between two Riemannian manifolds. Then the following conditions are equivalent:
	\begin{align*}
	1)\quad f &\mbox{ is metrically $1$-quasiregular}, \\
	2)\quad f &\mbox{ is weak metrically $1$-quasiregular}, \\
	3)\quad f &\mbox{ is analytically $1$-quasiregular}, \\
	4)\quad f &\mbox{ Riemannian $1$-quasiregular}, \\
	5)\quad f &\mbox{ is geometrically $1$-quasiregular}.
	\end{align*}
\end{theorem}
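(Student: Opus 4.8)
The plan is to mirror the architecture of the proof of Theorem~\ref{thm:equivalence of quasiregular mappings} while exploiting the drastic simplifications available in the Riemannian category. Here the manifold has a single layer (step $m=1$), so every ``Popp extension'' of a metric is the metric itself and the distortion matrix $\ol{g}^{-1}\ol{h}$ coincides with the genuine $\binom11$-tensor $g^{-1}h$ on $TM$; the Stepanov-type differentiability statement (Theorem~\ref{thm:Differentiability Stepanov}) reduces to the classical Stepanov theorem applied componentwise in charts; and Riemannian $n$-manifolds are locally Ahlfors $n$-regular and locally $n$-Loewner, hence of locally $n$-bounded geometry. I would first record the differentiability lemma~\ref{lemma:differentiability}: every continuous $f\colon M\to N$ is differentiable at almost every point of $L(f):=\{x:\limsup_{y\to x}d_h(f(y),f(x))/d_g(y,x)<\infty\}$. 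Next I would show that a weak metrically $1$-quasiregular branched covering lies in $N^{1,n}_{\loc}(M,N)$ and that $L(f)$ has full measure; this is the one genuinely analytic input, and it follows from the $K_O$-type/Loewner machinery (this is where local $n$-bounded geometry is used), after which Lemma~\ref{lemma:positiveness of Jacobian} gives $J_f>0$ a.e.

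With these preliminaries the ring of implications is short. The step $1)\Rightarrow2)$ is immediate since $h_f\le H_f$ pointwise. For $2)\Rightarrow3)$ I would argue that at a point $x\in L(f)$ of differentiability the quotients $L_f(x,r)/r$ and $l_f(x,r)/r$ converge to $\|Df(x)\|$ and $\|Df(x)\|_s$ respectively, by \eqref{eq:def maximal norm}--\eqref{eq:def for minimal norm}, so $h_f(x)\le1$ forces $\|Df(x)\|=\|Df(x)\|_s$, i.e.\ $Df(x)$ is a conformal linear map; hence all singular values of $Df(x)$ agree and $g_f(x)^n=\|Df(x)\|^n=|\det Df(x)|=J_f(x)$, which is analytic $1$-quasiregularity. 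The equivalence $3)\Leftrightarrow4)$ is purely algebraic: $\|g^{-1}f^*h\|$ is the largest eigenvalue of $Df^*Df$, hence equals $\|Df\|^2$, while $\det(g^{-1}f^*h)=(\det Df)^2=J_f^2$, so the Riemannian condition $\|g^{-1}f^*h\|^n\le\det(g^{-1}f^*h)$ reads $\|Df\|^{2n}\le J_f^2$, equivalently $\|Df\|^n\le J_f$, which (combined with $g_f=\Lip f=\|Df\|$ as in Proposition~\ref{prop:minimal upper gradient}) is analytic $1$-quasiregularity. The equivalence $3)\Leftrightarrow5)$ is Proposition~\ref{prop:analytic equal geometric}, whose proof is written for subRiemannian manifolds but uses nothing beyond what holds in the Riemannian setting (alternatively one may invoke the classical $K_O$/$K_I$-inequality theory). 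Finally, for the loop-closing implication $4)\Rightarrow1)$: the condition $\lambda_k^n=\lambda_1\cdots\lambda_k$ on the eigenvalues of $g^{-1}f^*h$ forces, by the arithmetic--geometric mean inequality, all $\lambda_i$ to coincide, so $f^*h=cg$ for some a.e.\ positive $c$, giving $H_f(x)\le1$ at every point of differentiability with $J_f(x)>0$, hence a.e.; the finiteness of $H_f(x)$ at \emph{every} point is obtained exactly as in parts (E)--(F) of the proof of Theorem~\ref{thm:equivalence of quasiregular mappings}, running the argument of~\cite[Theorem 7.1]{or09} with local $n$-bounded geometry to conclude $H_f(x)\le H(1,i(x,f))<\infty$.

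I expect the main obstacle to be precisely the two ``soft but not trivial'' facts that feed this scheme: (i) upgrading the weak metric hypothesis to $N^{1,n}_{\loc}$-regularity and a.e.\ differentiability of the branched covering, and (ii) proving that $H_f$ is finite \emph{everywhere}, not merely almost everywhere. Point (ii) is the only place where the branched-covering hypothesis (discreteness, openness, and the finiteness of the local index $i(x,f)$) is genuinely needed, and it cannot be extracted from infinitesimal conformality alone. Everything else is bookkeeping with singular values together with the identity $J_f=|\det Df|$ a.e. It is worth stressing that, in contrast to the subRiemannian case, no eigenvalue analysis of Popp extensions (Proposition~\ref{eigenval_compr}) enters here: for an $n$-dimensional Riemannian manifold $Q=n=k$, and the ``horizontal'', ``subRiemannian'' and ``analytic'' distortions literally coincide.
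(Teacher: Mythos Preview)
Your proposal is correct and largely parallels the paper's appendix, but the implication $4)\Rightarrow 1)$ is handled by a genuinely different route. The paper invokes the regularity theorem of~\cite{ls14} (Theorem~4.4 there) to conclude that a Riemannian $1$-quasiregular mapping is $C^2$-smooth and satisfies $f^*h=cg$; once $f$ is smooth, Lemma~\ref{lemma:differentiability} applies at \emph{every} point, and the computation in Proposition~\ref{prop:Riemannian implies metric} gives $H_f(x)\le 1$ everywhere outright. You instead bypass this regularity result and, mirroring part~(E) of the proof of Theorem~\ref{thm:equivalence of quasiregular mappings}, obtain $H_f\le 1$ only a.e.\ from the eigenvalue argument, and then secure $H_f<\infty$ at every point via the local-index bound of~\cite{or09}. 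Both routes are valid: the paper's is shorter given the external input from~\cite{ls14}, while yours keeps the Riemannian case a faithful specialization of the subRiemannian proof and does not rely on an independent (and rather deep) smoothness theorem. A minor secondary difference is that the paper routes the weak-metric hypothesis directly to the Riemannian definition (Proposition~\ref{prop:metric implies Riemannian}) rather than through the analytic one, but since $3)\Leftrightarrow 4)$ is purely algebraic this is cosmetic.
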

The various notions of quasiregularity in the theorem above can be found from the references~\cite{l14,hp07,hkst01,w12}, or alternatively, they can be understood as special cases of those in the equiregular subRiemannian setting we have introduced earlier. We remark that it follows from this theorem and results in~\cite{ls14} that $1$-quasiregular mappings on $C^\infty$ Riemannian manifolds in any of the above sense are $C^\infty$-smooth. (The works~\cite{ls14, jls} consider also regularity of $1$-quasiregular mappings in the situation when the Riemannian metrics are only in the H\"older class $C^r$, $r>1$. Here we only consider the $C^\infty$ case and refer to those works for more refined regularity results. However, we caution the reader that one has to be careful if trying apply the above theorem in the $C^r$, $r>0$ case. The reason is that our proofs below use for example normal coordinates, which can have unexpected behaviour at least in the case of $C^r$, $0<r<1$, regular metrics.)

We do not impose the topological assumption that $f$ is a branched covering in either the analytic or in the Riemannian definitions of quasiregularity. This is because a non-constant analytically quasiregular mapping is automatically both discrete and open (and thus a branched covering) by a deep theorem of Reshetnyak~\cite{re89}. See also Proposition~\ref{prop:basic properties of quasiregular mappings} below. 

\subsection{Auxiliary results}

The following proposition contains some basic properties of quasiregular mappings between Riemannian manifolds. 

\begin{proposition}[Section 3, \cite{hp07}]\label{prop:basic properties of quasiregular mappings}
	Let $f\colon M\to N$ be a non-constant quasiregular mapping. Then
	
	1) $f$ is sense-preserving, discrete and open,
	
	2) $\Vol_N(f(E))=0$ if and only if $\Vol_M(E)=0$,
	
	3) $\Vol_M(\mathcal{B}_f)=0$,
	
	4) The area formula holds: For all measurable functions $h:N\to [0,\infty]$ and for every measurable set $A\subset M$ there holds that
	\begin{align}\label{eq:area formula}
	\int_{A}h(f(x))J_f(x)d\Vol_M(x)=\int_{N}h(y)N(y,f,A)d\Vol_N(y),
	\end{align}
	where $N(y,f,A)=\card \big(f^{-1}(y)\cap A\big)$ is the multiplicity function of $f$ on $A$.
	
	5) $f$ is differentiable a.e. in $M$.
\end{proposition}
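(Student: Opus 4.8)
Every assertion in the proposition is local, so the plan is to reduce to the Euclidean theory of quasiregular mappings and then quote the classical results. First I would cover $M$ and $N$ by relatively compact coordinate charts; on each such chart the Riemannian metric $g$ (resp.\ $h$) is bi-Lipschitz equivalent to the Euclidean metric, and therefore, read in coordinates, $f$ is a non-constant quasiregular mapping $U\to\R^n$ between domains of $\R^n$, with a distortion constant controlled by $K$ and the bi-Lipschitz constants of the charts. (Here one uses that in $\R^n$ all the standard definitions of quasiregularity coincide, by the Gehring--V\"ais\"al\"a theorem, so it is immaterial which one we start from.) It then suffices to prove 1)--5) for such a Euclidean $f$ and to transport the conclusions back through the charts, noting that $\Vol_M$ and $\Vol_N$ are comparable to Lebesgue measure on the chart domains.

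Property 1) is the foundational theorem of Reshetnyak: a non-constant quasiregular mapping of a domain in $\R^n$ is sense-preserving, discrete and open; see~\cite{re89} and~\cite{mrv69,mrv70,mrv71,r93}. In particular $f$ is a branched covering, and the multiplicity function $N(y,f,A)$ is well defined and locally finite away from the branch image. For 5) I would invoke the classical fact that a quasiregular mapping lies in $W^{1,n}_{\loc}$ and is differentiable $\Vol_M$-a.e.~\cite{re89} (the same conclusion also follows from the Riemannian analogue of Theorem~\ref{thm:Differentiability Stepanov}, proved later in this appendix, via Stepanov's theorem). For 2), I would use that quasiregular maps satisfy Lusin's condition $(N)$, i.e.\ $\Vol_M(E)=0\Rightarrow\Vol_N(f(E))=0$~\cite{re89,mrv69,mrv71}, together with the a.e.\ positivity of the Jacobian $J_f>0$ (the Riemannian counterpart of Lemma~\ref{lemma:positiveness of Jacobian}); the positivity of $J_f$ and the change-of-variables formula then give the converse implication $(N^{-1})$. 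The area formula 4), with $N(y,f,A)=\card(f^{-1}(y)\cap A)$, is the standard change of variables for $W^{1,n}_{\loc}$-mappings satisfying condition $(N)$, applied to $f$ (see e.g.~\cite{hkst12}). Finally, 3) — that $\Vol_M(\mathcal{B}_f)=0$ — is again classical~\cite{re89,mrv69}: $\mathcal{B}_f$ is closed and nowhere dense, and its nullity follows from condition $(N)$, the positivity of $J_f$, and the area formula via a topological degree argument.

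The genuinely hard input is Reshetnyak's openness and discreteness theorem behind 1), together with the analytic package that accompanies it (condition $(N)$, positivity of the Jacobian, and the null-set structure of the branch set); all of these I would quote verbatim from~\cite{re89,mrv69,mrv70,mrv71,r93}. Everything else is bookkeeping: once the Euclidean statements are available, the only remaining work is to check that a bi-Lipschitz change of charts preserves quasiregularity with controlled constants and intertwines $\Vol_M$ and $\Vol_N$ with Lebesgue measure on the chart domains, which is precisely what the localisation in the first paragraph is designed to supply.
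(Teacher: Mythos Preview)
Your proposal is correct and follows the standard route --- localize via bi-Lipschitz charts and invoke the classical Euclidean theory of Reshetnyak and Martio--Rickman--V\"ais\"al\"a. Note, however, that the paper does not actually supply a proof of this proposition: it is stated as a citation result, attributed in its header to Section~3 of~\cite{hp07}, and no argument is given in the text. So there is nothing to compare against beyond observing that your chart-reduction sketch is precisely the approach taken in the cited reference.
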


The following lemma is a special case of Theorem~\ref{thm:Differentiability Stepanov}. Since we find the lemma has some value on its own (and the proof of Theorem~\ref{thm:Differentiability Stepanov} is complicated), we give a simple proof of it below.
\begin{lemma}\label{lemma:differentiability}
	Let $f\colon (M,g)\rightarrow (N,h)$ be a continuous mapping that is differentiable at $x_0\in M$. Then there exists a constant $\delta>0$ such that if $d(x,x_0)\leq \delta$, then we have
	$$
	d_h(f(x),f(x_0))=h_{f(x_0)}(f_*V,f_*V)^{1/2} + o\big(d_g(x,x_0)\big),
	$$
	where
	$V=\exp_{x_0}^{-1}(x)\in T_{x_0} M$ with $|V|_{g(x_0)}=d_g(x,x_0)$.
\end{lemma}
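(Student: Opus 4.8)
The statement is the Riemannian special case of the first-order expansion contained in Theorem~\ref{thm:Differentiability Stepanov}, and the plan is to prove it directly by passing to normal coordinates on both manifolds. The point is that differentiability at $x_0$ is a chart-independent notion, so we are free to compute in the coordinates in which the geometry is simplest.

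First I would fix Riemannian normal coordinates. Choose a normal coordinate chart $\varphi\colon U\to\R^n$ centered at $x_0$, so that $\varphi(x_0)=0$ and $g_{ij}(0)=\delta_{ij}$, and recall the two basic facts about normal coordinates: for every $x\in U$ one has $d_g(x,x_0)=|\varphi(x)|$ (the Euclidean norm of the coordinate vector), and the identification $T_{x_0}M\cong\R^n$ furnished by the coordinate frame sends $\exp_{x_0}^{-1}(x)$ to $\varphi(x)$. Likewise choose a normal coordinate chart $\psi\colon W\to\R^n$ centered at $\bar x_0:=f(x_0)$, so that $\psi(\bar x_0)=0$, $h_{ij}(0)=\delta_{ij}$, and $d_h(q,\bar x_0)=|\psi(q)|$ for all $q\in W$. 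By continuity of $f$ at $x_0$ I may then pick $\delta>0$ small enough that $d_g(x,x_0)\le\delta$ forces both $x\in U$ and $f(x)\in W$, and also keeps $\delta$ below the injectivity radii of $x_0$ and $\bar x_0$ so that the above identities are valid.

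Next I would invoke differentiability. Since differentiability at a point is independent of charts, the coordinate representative $\tilde f:=\psi\circ f\circ\varphi^{-1}$ is differentiable at $0$ with $\tilde f(0)=0$ and differential $A:=D\tilde f(0)$, and $A$ is precisely the matrix of $f_*\colon T_{x_0}M\to T_{\bar x_0}N$ with respect to the two coordinate frames. Writing $y:=\varphi(x)$, this gives $\tilde f(y)=Ay+o(|y|)$ as $y\to0$. Combining the two steps, for $x$ with $d_g(x,x_0)=|y|\le\delta$ I obtain
\begin{align*}
d_h\big(f(x),f(x_0)\big)=|\psi(f(x))|=|\tilde f(y)|=|Ay+o(|y|)|=|Ay|+o(|y|),
\end{align*}
where the last equality uses the reverse triangle inequality for the Euclidean norm. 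Now $V:=\exp_{x_0}^{-1}(x)$ corresponds to $y$ under the coordinate identification, so $f_*V$ corresponds to $Ay$, and since $h_{ij}(0)=\delta_{ij}$ the Euclidean norm $|Ay|$ equals $h_{f(x_0)}(f_*V,f_*V)^{1/2}$; moreover $|y|=|V|_{g(x_0)}=d_g(x,x_0)$, so $o(|y|)=o\big(d_g(x,x_0)\big)$. Substituting yields exactly $d_h(f(x),f(x_0))=h_{f(x_0)}(f_*V,f_*V)^{1/2}+o\big(d_g(x,x_0)\big)$.

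The only points that really require care are the exact normal-coordinate identity $d_g(x,x_0)=|\varphi(x)|$ (and its analogue on $N$), which is what guarantees that no $O(|y|^2)$ metric-distortion term contaminates the distance to the basepoint, and the choice of $\delta$ simultaneously controlling both injectivity radii via the continuity of $f$. Everything else is routine, which is precisely why this Riemannian version is so much shorter than the subRiemannian Theorem~\ref{thm:Differentiability Stepanov}.
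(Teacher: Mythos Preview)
Your proof is correct and follows essentially the same approach as the paper: pass to Riemannian normal coordinates on both $M$ and $N$, use the identities $d_g(x,x_0)=|\varphi(x)|$ and $d_h(f(x),f(x_0))=|\psi(f(x))|$, expand the coordinate representative of $f$ to first order, and identify the linear part with $f_*V$. Your version is somewhat more explicit about the choice of $\delta$ and the reverse triangle inequality, but the argument is the same.
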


\begin{proof}
	Let $(x^i)$ and $(y^{a})$, $i,a=1,\ldots n$, be $g$- and $h$-normal coordinates centered at $x_0$ and $f(x_0)$ respectively. Let $x\in M$ with $d_g(x,x_0)=r$, $r>0$ small. In the above coordinates we have
	$$
	d_h(f(x),f(x_0))=|f(x)|_{h(f(x_0))}=|f(x)|_{\R^n}
	$$
	and
	$$
	d_g(x,x_0)=|x|_{g(x_0)}=|x|_{\R^n}.
	$$
	By the differentiability of $f$ at $x_0$, we have
	$$
	f^i(x)=0+\partial_{a}f^i(0) x^{a}+o(|x|).
	$$
	Combining these, we have
	$$
	d_h(f(x),f(x_0))=|(\partial_af^i(0) x^a)|_{h(f(x_0))}+o(|x|)=|f_* V|_{h(f(x_0))}+o\big(d_g(x,x_0)\big),
	$$
	with 
	$V=\exp_{x_0}^{-1}(x)\in T_{x_0} M$ with $|V|_{g(x_0)}=d_g(x,x_0)$.
\end{proof}

\subsection{Equivalence of Riemannian and analytic 1-quasiregularity}\label{sec:Equivalence of Riemannian and analytic 1-quasiregularity}

It follows from~\cite[Section 5]{w12proc} that if $f\colon M\to N$ is a non-constant quasiregular mapping between two Riemannian manifolds, then $\Lip f$ is the minimal $n$-weak upper gradient of $f$. We prove a couple of lemmas will yield the equivalence of Riemannian and analytic 1-quasiregularity. The first lemma shows that at the points $x_0$ of differentiability, $\Lip f(x_0)$ coincides with the supremum norm 
$$\|Df(x_0)\|:=\sup_{V\in T_{x_0}M}\big\{|Df(x_0)V|_{h(f(x_0))}:|V|_{g(x_0)}\leq 1\big\}.$$

%The following two lemmas will prove the equivalence of Riemannian and analytic 1-quasiregularity.

\begin{lemma}\label{lemma:weak upper gradient coincide with super norm}
	Let $f\colon (M,g)\to (N,h)$ be a continuous mapping. If $f$ is differentiable at $x_0\in M$, then 
	\begin{align}\label{eq:Lip = sup norm}
	\Lip f(x_0)=\|Df(x_0)\|.
	\end{align}
\end{lemma}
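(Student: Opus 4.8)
The plan is to read this off directly from Lemma~\ref{lemma:differentiability}. Write $\bar x_0:=f(x_0)$ and note that the object $f_*V$ appearing in that lemma is precisely $Df(x_0)V$. First I would fix the punctured $\delta$-neighborhood of $x_0$ supplied by Lemma~\ref{lemma:differentiability} and, for $x$ in it, set $V(x):=\exp_{x_0}^{-1}(x)$, so that $|V(x)|_{g(x_0)}=d_g(x,x_0)$. Dividing the asymptotic identity of Lemma~\ref{lemma:differentiability} by $d_g(x,x_0)$ and using linearity of $Df(x_0)$ gives
\[
\frac{d_h(f(x),f(x_0))}{d_g(x,x_0)}=\Big|Df(x_0)\tfrac{V(x)}{|V(x)|_{g(x_0)}}\Big|_{h(\bar x_0)}+o(1)\qquad(x\to x_0).
\]
Since $V(x)/|V(x)|_{g(x_0)}$ is a $g(x_0)$-unit vector, the first term is bounded by $\|Df(x_0)\|$, and taking $\limsup_{x\to x_0}$ yields $\Lip f(x_0)\le\|Df(x_0)\|$.

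For the opposite inequality I would produce test points along a fixed optimal direction. The $g(x_0)$-unit sphere in $T_{x_0}M$ is compact and $V\mapsto|Df(x_0)V|_{h(\bar x_0)}$ is continuous, so it attains its maximum $\|Df(x_0)\|$ at some unit vector $V_0$. Taking any sequence $r_j\downarrow 0$ (with $r_j<\delta$) and $x_j:=\exp_{x_0}(r_jV_0)$, we have $d_g(x_j,x_0)=r_j$ and $\exp_{x_0}^{-1}(x_j)=r_jV_0$; hence Lemma~\ref{lemma:differentiability} and linearity give $d_h(f(x_j),f(x_0))=r_j\|Df(x_0)\|+o(r_j)$, so that $d_h(f(x_j),f(x_0))/d_g(x_j,x_0)\to\|Df(x_0)\|$. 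This forces $\Lip f(x_0)\ge\|Df(x_0)\|$, and combining the two bounds proves \eqref{eq:Lip = sup norm}.

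I do not expect a real obstacle here; the only points that need a word of justification are the identification $f_*V=Df(x_0)V$ and the two standard Riemannian facts used implicitly, namely that $x_0$ has a normal neighborhood on which $\exp_{x_0}$ is a diffeomorphism onto its image and $d_g(\exp_{x_0}(V),x_0)=|V|_{g(x_0)}$ for small $V$ — both valid since $g$ is smooth. If one wished to avoid even this, the same argument goes through verbatim using $g$-normal coordinates at $x_0$, exactly as in the proof of Lemma~\ref{lemma:differentiability}.
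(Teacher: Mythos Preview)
Your proposal is correct and follows essentially the same route as the paper: both derive the result as an immediate consequence of Lemma~\ref{lemma:differentiability}, parametrizing points near $x_0$ via the exponential map and reading off the quotient $d_h(f(x),f(x_0))/d_g(x,x_0)$ as $|Df(x_0)\hat V|_{h(\bar x_0)}+o(1)$ for a unit vector $\hat V$. The paper compresses the argument into a single chain of equalities, whereas you split it into the two inequalities $\le$ and $\ge$ and handle the latter by choosing an optimal direction $V_0$; this is the same idea written out more carefully.
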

\begin{proof}
	This is an immediate consequence of Lemma~\ref{lemma:differentiability}. Indeed, 
	\begin{align*}
	\Lip f(x_0)&=\limsup_{r\to 0}\sup_{x\in B(x_0,r)}\frac{d_N(f(x),f(x_0))}{r} =\limsup_{r\to 0}\sup_{|V|_{g(x_0)}\leq r}\frac{|Df(x_0)V|_{h(f(x_0))}}{r} \\
	&=\limsup_{r\to 0}\sup_{|V|_{g(x_0)}\leq r}\frac{|V|_{g(x_0)}}{r}\frac{|Df(x_0)V|_{h_f(x_0)}}{|V|_{g(x_0)}}=\|Df(x_0)\|.
	\end{align*}
\end{proof}

The next result is an analogue of Lemma~\ref{lemma:coincidence of two Jacobian} and in our case, it follows directly from the change of variable formula.
%\footnote{I bet it should somehow follow more directly from definition}.
\begin{lemma}\label{lemma:determinant coincides with volume derivative}
	Let $f\colon (M,g)\to (N,h)$ be a quasiregular mapping. Then
	\begin{align}\label{eq:determinant = volume derivative}
	\Dett( Df(x_0))=J_f(x_0)
	\end{align}
	for a.e. $x_0\in M$.
\end{lemma}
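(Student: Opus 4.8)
The plan is to identify $\Dett(Df(x_0))$ and $J_f(x_0)$ as two representatives of one and the same Radon--Nikodym density, namely that of the pullback measure $f^*\Vol_N$ with respect to $\Vol_M$, and then to conclude by the Lebesgue differentiation theorem. First I would collect the properties of a quasiregular $f$ supplied by Proposition~\ref{prop:basic properties of quasiregular mappings}: $f$ is differentiable $\Vol_M$-a.e., it is sense-preserving (so $\Dett(Df)\ge 0$ a.e.), it sends $\Vol_M$-null sets to $\Vol_N$-null sets, and its branch set $\mathcal{B}_f$ is $\Vol_M$-null. Condition (N) gives $f^*\Vol_N\ll\Vol_M$ at once: if $\Vol_M(E)=0$ then $\Vol_N(f(E))=0$, and since $\card(f^{-1}(y)\cap E)$ is positive only when $y\in f(E)$, definition~\eqref{eq:definition of pull-back} forces $f^*\Vol_N(E)=0$. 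Hence the volume Jacobian $J_f$ of~\eqref{eq:analytic jacobian} is, by its very construction, the density of $f^*\Vol_N$, i.e.\ $f^*\Vol_N(A)=\int_A J_f\,d\Vol_M$ for every Borel $A\subset M$.

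Next I would invoke the classical area (change of variables) formula for the a.e.\ differentiable map $f$, which is legitimate here since $f\in N^{1,n}_{\mathrm{loc}}(M,N)=W^{1,n}_{\mathrm{loc}}(M,N)$ and $f$ satisfies condition (N); it yields
$$\int_A |\Dett(Df(x))|\,d\Vol_M(x)=\int_N \card\big(f^{-1}(y)\cap A\big)\,d\Vol_N(y)=f^*\Vol_N(A)$$
for all Borel $A\subset M$. Dropping the absolute value by sense-preservation and comparing with the previous paragraph, I obtain $\int_A \Dett(Df)\,d\Vol_M=\int_A J_f\,d\Vol_M$ for every Borel $A$, so that $\Dett(Df(x_0))=J_f(x_0)$ for $\Vol_M$-a.e.\ $x_0\in M$.

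The one step I expect to need some care is the invocation of the area formula with the \emph{pointwise} Jacobian $\Dett(Df)$, as opposed to the a priori different volume Jacobian $J_f$ recorded in item 4) of Proposition~\ref{prop:basic properties of quasiregular mappings}; this is standard for quasiregular mappings and I would simply cite it (see~\cite{re89}). Alternatively, one obtains the identity locally and directly: for $\Vol_M$-a.e.\ $x_0$ the map $f$ is differentiable at $x_0$, is a local homeomorphism near $x_0$ (as $x_0\notin\mathcal{B}_f$), and~\eqref{eq:another description of Jacobian} holds at $x_0$. If $\Dett(Df(x_0))\neq 0$, differentiability gives $f(B(x_0,r))=f(x_0)+Df(x_0)B(x_0,r)\,(1+o(1))$ as $r\to0$, so the quotient in~\eqref{eq:another description of Jacobian} converges to $\Dett(Df(x_0))$; if $\Dett(Df(x_0))=0$, then $Df(x_0)B(x_0,r)$ lies in a hyperplane and $f(B(x_0,r))$ sits in an $o(r)$-neighbourhood of a bounded piece of it, so $\Vol_N(f(B(x_0,r)))=o(r^n)$ and $J_f(x_0)=0$. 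Either way $J_f(x_0)=\Dett(Df(x_0))$. This parallels the structure of the proof of Lemma~\ref{lemma:coincidence of two Jacobian} in the subRiemannian case.
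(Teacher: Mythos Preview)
Your proof is correct and follows essentially the same route as the paper: both arguments establish $\int_A J_f\,d\Vol_M=\int_A \Dett(Df)\,d\Vol_M$ for every measurable $A$ by combining the definition of $J_f$ as a Radon--Nikodym density of $f^*\Vol_N$ with the classical area formula involving $\Dett(Df)$ (the paper cites~\cite[Theorem 2.2.4]{l14} for the latter, you cite~\cite{re89}), and then conclude via Lebesgue differentiation. Your explicit verification that $f^*\Vol_N\ll\Vol_M$ and your alternative local argument via~\eqref{eq:another description of Jacobian} are a bit more detailed than the paper's terse two-line proof, but the underlying idea is identical.
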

\begin{proof}
	By Proposition~\ref{prop:basic properties of quasiregular mappings} 4) and~\cite[Theorem 2.2.4]{l14},
	\begin{align}\label{eq:det = vol}
	\int_AJ_f(x)d\Vol_M(x)=\int_A\Dett(Df(x))d\Vol_M(x)
	\end{align}
	for all measurable set $A\subset M$. Note that $J_f$ and $\Dett(Df)$ are locally integrable on $M$, the claim follows from~\eqref{eq:det = vol} and the Lebesgue differentiation theorem.
\end{proof}

Plugging the results of these lemmas into the definitions of analytic $1$-quasiregularity and Riemannian $1$-quasiregularity shows that they are equivalent.

\subsection{Equivalence of Riemannian and metric 1-quasiregularity}\label{sec:Equivalence of Riemannian and metric 1-quasiregularity}

We show that Riemannian $1$-quasiconformal mappings and $1$-quasiconformal mappings defined in the (weak) metric sense are equivalent. 
%That is, if a mapping satisfies the either definition of $1$-quasiconformality it also satisfies also the other. 

Let us first show Riemannian $1$-quasiregular mappings are $1$-quasiregular in the metric sense. 
%We assume the mapping to be $C^2$.
\begin{proposition}\label{prop:Riemannian implies metric}
	Let $f\colon (M,g)\rightarrow (N,h)$ be a Riemannian $1$-quasiregular mapping. Then it is metrically $1$-quasiregular.
\end{proposition}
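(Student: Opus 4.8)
The plan is to split the implication into an \emph{almost everywhere} bound on the linear dilatation, obtained directly from the conformality of the differential together with Lemma~\ref{lemma:differentiability}, and an \emph{everywhere} finiteness statement, obtained from the classical local theory of branched coverings. First I would record what the hypothesis supplies. By the equivalence established in Section~\ref{sec:Equivalence of Riemannian and analytic 1-quasiregularity}, a Riemannian $1$-quasiregular mapping $f$ is analytically $1$-quasiregular, and in particular is non-constant analytically quasiregular; hence by Proposition~\ref{prop:basic properties of quasiregular mappings} (Reshetnyak's theorem) it is a sense-preserving branched covering, is differentiable $\Vol_M$-a.e., and $\Vol_M(\mathcal{B}_f)=0$, while by Lemma~\ref{lemma:positiveness of Jacobian} $J_f>0$ a.e. Finally, the $1$-quasiregularity inequality forces $Df(x)$ to be a conformal linear isomorphism for a.e.\ $x$, i.e.\ $f^*h(x)=c(x)\,g(x)$ a.e., with $c(x)=J_f(x)^{2/n}>0$ a.e.

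For the a.e.\ bound, fix $x_0$ in the full-measure set on which $f$ is differentiable, $J_f(x_0)>0$ and $f^*h(x_0)=c(x_0)g(x_0)$. By Lemma~\ref{lemma:differentiability}, for $r:=d_g(x,x_0)$ small and $V:=\exp_{x_0}^{-1}(x)$ (so $|V|_{g(x_0)}=r$),
$$
d_h(f(x),f(x_0))=h_{f(x_0)}(f_*V,f_*V)^{1/2}+o(r)=\sqrt{c(x_0)}\,r+o(r),
$$
where the last step uses $h_{f(x_0)}(f_*V,f_*V)=(f^*h)_{x_0}(V,V)=c(x_0)\,g_{x_0}(V,V)=c(x_0)r^2$. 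Since the error is $o(r)$ uniformly in the direction of $V$, taking the supremum and the infimum over the sphere $\{x:d_g(x,x_0)=r\}$ yields $L_f(x_0,r)=\sqrt{c(x_0)}\,r+o(r)$ and $l_f(x_0,r)=\sqrt{c(x_0)}\,r+o(r)$; since $c(x_0)>0$ this gives $H_f(x_0,r)\to 1$ as $r\to0$, so $H_f(x_0)=1$. Hence $H_f\le 1$ a.e.

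It remains to verify $H_f(x)<\infty$ for \emph{every} $x\in M$, in particular at branch points and points of non-differentiability. Here I would pass to smooth (normal-coordinate) charts around $x$ and around $f(x)$; these are locally bi-Lipschitz, and finiteness of the linear dilatation at a point is unaffected by bi-Lipschitz coordinate changes, so the claim reduces to the Euclidean fact that a quasiregular branched covering has finite linear dilatation at every point, with a bound depending only on the dimension, the distortion constant, and the (always finite) local index $i(x,f)$; cf.\ \cite[proof of Theorem~7.1]{or09} and \cite{r93}. Combining this with the previous paragraph shows $f$ is metrically $1$-quasiregular.

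The main obstacle is precisely this everywhere-finiteness: the conformality identity $f^*h=cg$ holds only a.e.\ and is silent at branch points, so $H_f(x)<\infty$ there must be extracted from the local structure theory of branched coverings, exactly as in part (E) of the proof of Theorem~\ref{thm:equivalence of quasiregular mappings}, where the subRiemannian analogue is handled via \cite{or09,gnw15}. The a.e.\ estimate, by contrast, is a one-line application of Lemma~\ref{lemma:differentiability} once the conformality of $Df$ has been recorded.
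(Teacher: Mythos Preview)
Your proof is correct, but the paper takes a shorter route. The paper invokes the regularity theorem~\cite[Theorem~4.4]{ls14} to conclude that a Riemannian $1$-quasiregular map is $C^2$ and satisfies $f^*h=cg$ \emph{pointwise}; with smoothness in hand, Lemma~\ref{lemma:differentiability} applies at every point and gives $H_f(x)\leq 1$ directly, so no separate ``everywhere finiteness'' step is needed. You instead derive $f^*h=cg$ only a.e.\ (from the analytic $1$-quasiregularity via the equality case of $\lambda_k^n\leq\lambda_1\cdots\lambda_n$), use Lemma~\ref{lemma:differentiability} to get $H_f\leq 1$ on that full-measure set, and then handle the pointwise finiteness $H_f(x)<\infty$ at branch points by the local-index argument of~\cite{or09} via bi-Lipschitz charts, exactly as in part~(E) of the proof of Theorem~\ref{thm:equivalence of quasiregular mappings}. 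Your approach is more self-contained in that it avoids the deep smoothness input from~\cite{ls14}, trading it for the (also nontrivial but more elementary) distortion bound for branched coverings; the paper's approach is cleaner once~\cite{ls14} is available.
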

\begin{proof}
	By~\cite[Theorem 4.4]{ls14}, Riemannian $1$-quasiregular mappings are $C^2$ regular and they satisfy the equation of conformality
	$$
	f^*h=cg.
	$$
	By lemma~\ref{lemma:differentiability} we have
	\begin{multline}
		\sup_{y\in \partial B(x,r)} d(f(x),f(y))=\sup_{y\in \partial{B}(x,r)}\big(h_{f(x)}(f_*V,f_*V\big)^{1/2} + o(d_g(x,y))\big)\\
		=\sup_{y\in \partial{B}(x,r)}\big(c(x)g_x(V,V)^{1/2} + o(d_g(x,y))\big)=c(x)r +o(r)
	\end{multline}
	and similarly
	$$
	\inf_{y\in \partial B(x,r)} d(f(x),f(y))= c(x)r+o(r).
	$$
	Thus
	$$
	\limsup_{r\rightarrow 0} \frac{\sup_{y\in \partial{B}(x,r)} d(f(x),f(y))}{\inf_{y\in\partial B(x,r)} d(f(x),f(y))}\leq 1.
	$$
\end{proof}

In the other direction we have: 

\begin{proposition}\label{prop:metric implies Riemannian}
	Let $f\colon (M,g)\rightarrow (N,h)$ be a weak metrically $1$-quasiregular mapping. Then $f$ is Riemannian $1$-quasiregular.
\end{proposition}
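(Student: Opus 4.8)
The plan is to deduce Proposition~\ref{prop:metric implies Riemannian} from the equivalence of analytic and Riemannian $1$-quasiregularity already obtained in Section~\ref{sec:Equivalence of Riemannian and analytic 1-quasiregularity}; the only genuinely new point is to upgrade the weak metric hypothesis to the analytic one. So the argument splits into two steps: \emph{Step 1}, weak metrically $1$-quasiregular $\Rightarrow$ analytically $1$-quasiregular, and \emph{Step 2}, which is essentially a citation of the analytic $\Leftrightarrow$ Riemannian equivalence (and whose heart I recall below for completeness).

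For Step 1, recall that every $C^\infty$ Riemannian $n$-manifold has locally bounded geometry; in particular it is locally Ahlfors $n$-regular and satisfies a local $(1,n)$-Poincar\'e inequality. Under this background hypothesis a branched covering $f$ with $h_f(p)<\infty$ for every $p\in M$ and $h_f\le 1$ a.e. belongs to $N^{1,n}_{loc}(M,N)$ and satisfies $g_f^n\le J_f$ a.e., i.e. $f$ is analytically $1$-quasiregular. For homeomorphisms this is~\cite[Theorem 9.8]{hkst01} together with the liminf refinement of~\cite[Theorem 5.2]{bkr07}; the branched covering case is covered by~\cite{w12} (see also~\cite{or09}). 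Equivalently, one can simply invoke the implication $2)\Rightarrow 4)$ of Theorem~\ref{thm:equivalence of quasiregular mappings}, applied to $M$ and $N$ equipped with the trivial (step one) subRiemannian structure whose horizontal distribution is the whole tangent bundle: there $Q=n$, so the quantitative bound $K=H^{Q-1}$ gives $K=1$.

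Once $f$ is analytically $1$-quasiregular, Reshetnyak's theorem~\cite{re89} makes it a branched covering; by Proposition~\ref{prop:basic properties of quasiregular mappings} it is differentiable a.e. with $J_f>0$ a.e. At a.e. $x_0$ one has $g_f(x_0)=\Lip f(x_0)=\|Df(x_0)\|$ (Lemma~\ref{lemma:weak upper gradient coincide with super norm} together with $g_f=\Lip f$ from~\cite[Section 5]{w12proc}) and $J_f(x_0)=\Dett(Df(x_0))$ (Lemma~\ref{lemma:determinant coincides with volume derivative}). The analytic inequality then reads $\|Df(x_0)\|^n\le\Dett(Df(x_0))$; writing $0<\sigma_1\le\cdots\le\sigma_n$ for the singular values of $Df(x_0)$ (all positive since the Jacobian is), this is $\sigma_n^n\le\sigma_1\cdots\sigma_n$, which, since $\sigma_i\le\sigma_n$ for every $i$, forces $\sigma_1=\cdots=\sigma_n$. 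Hence $Df(x_0)^*Df(x_0)=\sigma_n^2\,\Id$, so $(f^*h)(x_0)=c(x_0)g(x_0)$ with $c(x_0)=\|Df(x_0)\|^2>0$; thus $f^*h=cg$ a.e. with $c$ a.e. positive, which together with $f\in N^{1,n}_{loc}(M,N)$ is exactly Riemannian $1$-quasiregularity.

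The one nontrivial ingredient is Step 1: extracting quantitative Sobolev regularity from the purely infinitesimal bound on $h_f$. A more self-contained route would run Lemma~\ref{lemma:differentiability} at points of differentiability — there $L_f(x_0,r)=r\|Df(x_0)\|+o(r)$ and $l_f(x_0,r)=r\min_{|V|=1}|Df(x_0)V|+o(r)$, so $h_f(x_0)$ equals the ratio of the largest to the smallest singular value of $Df(x_0)$, and $h_f\le1$ a.e. again yields conformality — but this presupposes differentiability a.e., which in the absence of an a priori Lipschitz-type bound is itself obtained only through Step 1 (via Proposition~\ref{prop:on differentiability}, or via Stepanov's theorem once $\Lip f<\infty$ a.e. is known). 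I therefore expect to record Step 1 as a citation to the metric-space quasiregularity literature rather than to reprove it here, while the remainder reduces to the elementary arithmetic--geometric-mean argument on singular values already used in Proposition~\ref{prop:Riemannian implies metric}.
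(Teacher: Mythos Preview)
Your proof is correct. The paper takes precisely the alternative route you sketch in your last paragraph: it invokes the qualitative metric-to-analytic equivalence only to obtain a.e.\ differentiability (via Proposition~\ref{prop:basic properties of quasiregular mappings}), and then at each point of differentiability uses Lemma~\ref{lemma:differentiability} to compute $h_f(x)=\lambda_{\max}(x)/\lambda_{\min}(x)$ directly from the weak metric hypothesis; equality of all eigenvalues of $g^{-1}f^*h$ then gives $f^*h=cg$. Your main route instead extracts analytic $1$-quasiregularity with constant exactly $1$ up front (citing the quantitative bound $K=H^{n-1}$ from Theorem~\ref{thm:equivalence of quasiregular mappings} or the metric-space literature) and then runs the arithmetic--geometric-mean argument on singular values via the analytic inequality $\|Df\|^n\le\Dett(Df)$, reducing everything to the analytic--Riemannian equivalence of Section~\ref{sec:Equivalence of Riemannian and analytic 1-quasiregularity}. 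Both are sound; the paper's version is marginally more self-contained for an appendix meant to stand apart from the subRiemannian machinery, since it needs only the \emph{qualitative} metric $\Rightarrow$ analytic implication to get differentiability rather than the precise constant, while your version makes the dependence on the quantitative equivalence explicit and cleanly factors through the already-established analytic $\Leftrightarrow$ Riemannian step.
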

\begin{proof}
	By Proposition~\ref{prop:basic properties of quasiregular mappings} we have that $f$ is differentiable a.e. in $M$. In the same vein as above, fix a point $x\in M$ such that $f$ is differentiable at $x$. Then we have
	\begin{align}
		\sup_{y\in \bar{B}(x,r)} d(f(x),f(y))&=\sup_{y\in \bar{B}(x,r)}\big(h_{f(x)}(f_*V,f_*V)^{1/2} + o(d_g(x,y))\big) \\
		&=\sup_{|V|_{g(x)}\leq r}|Df V|_{h(f(x))} + o(r) 
	\end{align}
	and
	$$
	\inf_{y\in \partial B(x,r)} d(f(x),f(y))=\inf_{|V|_{g(x)}= r}|Df V|_{h(f(x))} + o(r).
	$$
	Here we have used the fact that $V=\exp_x^{-1}y$, $|V|_{g(x)}=d_g(x,y)$. It follows that
	$$
	1=\liminf_{r\rightarrow 0} \frac{\sup_{y\in \bar{B}(x,r)} d(f(x),f(y))}{\inf_{y\in M\backslash B(x,r)} d(f(x),f(y))}=\frac{\lambda_{max}(x)}{\lambda_{min}(x)},
	$$
	where $\lambda_{max}(x)$ and $\lambda_{min}(x)$ are the largest and smallest eigenvalue of the matrix $g^{-1}f^*h$.
	%{\color{blue} see for instance~.....} 
	Since the eigenvalues of $g^{-1}f^*h$ are all the same, we have
	$$
	g^{-1}f^*h = c I_{n\times n}.
	$$
	Since this holds at a.e. every point, $f$ is a conformal mapping
	$$
	f^*h=cg,
	$$
	for some a.e. positive function $c$ and thus Riemannian $1$-quasiregular by~\cite[Theorem 4.4]{ls14}.
\end{proof}

\subsubsection*{Acknowledgements}
C.-Y. Guo was supported by the Magnus Ehrnrooth foundation, the Finnish Cultural Foundation--Central Finland Regional Fund (No.~30151735) and the Swiss National Science Foundation (No.~153599). T. Liimatainen was partly supported by the Academy of Finland (Centre of Excellence in Inverse Problems Research) and an ERC Starting Grant (grant agreement no 307023).

C.-Y. Guo would like to thank Prof.~Marshall Williams for many useful discussions on the definitions of quasiregularity in general metric measure spaces, as well as for many other discussion on topics related to analysis on metric spaces during the past years. 

The authors are indebted to Prof.~Enrico Le Donne for his insightful comments, in particular, for pointing out a mistake in the preliminary version of the manuscript. They would like to thank Profs.~Pekka Koskela,~Pekka Pankka and~Ilkka Holopainen for helpful comments on Corollary~\ref{coro:equivalence of 1-quasiregular mappings} in the Riemannian manifolds, and Profs.~Jeremy Tyson,~Kirsi Peltonen, and~Katrin F\"assler for their interest in this work.

\end{document}